\newcommand*{\K}{\mathbb{K}}%.............................number field K
\newcommand*{\A}{\mathbb{A}}%.............................adele
\newcommand*{\La}{\mathsf{L}}%............................Laplace transform
\newcommand*{\DLa}{\mathsf{L}_\pm}%............................Laplace transform
\newcommand*{\LaCa}{\mathsf{LC}}%..........................Laplace-Carleman transform
\newcommand*{\Me}{\mathsf{M}}%..........................Mellin transform
\newcommand*{\MeCa}{\mathsf{MC}}%........................Mellin-Carleman transform
\DeclarePairedDelimiter\abs{\lvert}{\rvert}%..............|.|
\newcommand*{\C}{\mathbb{C}}%.............................C
\newcommand*{\dd}{%
  \mathop{\mathrm{d}\null}\mskip-\thinmuskip\mathord{\null}} %....d differentiel
\renewcommand*{\epsilon}{\varepsilon}%....................epsilon
\newcommand*{\fGamma}{\mathop{\Gamma}\nolimits}%............................Fonction.Gamma
\renewcommand*{\geq}{\geqslant}%..........................>=
\newcommand*{\gGamma}{\varGamma}%............................Groupe.Gamma
\let\oldIm=\Im
\renewcommand*{\Im}{\mathop{\oldIm\mkern-2mu 
m}\nolimits}%....partie.imaginaire
\renewcommand*{\leq}{\leqslant}%..........................<=
\newcommand*{\NiceE}{\mathrm{Nice-Ell}\null}%....................NiceEll
\newcommand*{\NiceWE}{\mathrm{Nice^{\flat}-Ell}\null}%....................WeakNiceEll
\newcommand*{\NiceHW}{\mathrm{Nice-HW}\null}%....................NiceHW
\newcommand*{\NiceH}{\mathrm{Nice-H}\null}%....................NiceH
\newcommand*{\MPHW}{\mathrm{Mp-HW}\null}%....................MP-HW
\newcommand*{\MPH}{\mathrm{Mp-H}\null}%....................MP-H
\newcommand*{\Q}{\mathbb{Q}}%.............................Q
\newcommand*{\R}{\mathbb{R}}%.............................R
\let\oldRe=\Re
\renewcommand*{\Re}{\mathop{\oldRe\mkern-2mu e}\nolimits}%....partie reelle
\newcommand*{\Schwartz}{\mathcal{S}}%...................classe.de.Schwartz
\newcommand*{\Z}{\mathbb{Z}}%.............................Z
 \newlength{\h@uteurnumerateur}
 \newlength{\h@uteurdenominateur}
\newcommand*{\quotientdroite}[2]{%...................................................................quotient.droite
  \mathchoice%
  {%displaystyle
    \settoheight{\h@uteurnumerateur}{\ensuremath{\displaystyle{#1#2}}}%
    \settoheight{\h@uteurdenominateur}{\ensuremath{\displaystyle{#1#2}}}%
    \raisebox{0.5\h@uteurnumerateur}{\ensuremath{\displaystyle{#1}}}%
    \mkern-5mu\diagup\mkern-4mu%
    \raisebox{-0.5\h@uteurdenominateur}{\ensuremath{\displaystyle{#2}}}%
  }
  {%textstyle
    \settoheight{\h@uteurnumerateur}{\ensuremath{\textstyle{#1#2}}}%
    \settoheight{\h@uteurdenominateur}{\ensuremath{\textstyle{#1#2}}}%
  \raisebox{0.2\h@uteurnumerateur}{\ensuremath{\textstyle{#1}}}%
  /%
  \raisebox{-0.2\h@uteurdenominateur}{\ensuremath{\textstyle{#2}}}%
  }
  {%scriptstyle
    \settoheight{\h@uteurnumerateur}{\ensuremath{\scriptstyle{#1#2}}}%
    \settoheight{\h@uteurdenominateur}{\ensuremath{\scriptstyle{#1#2}}}%
  \raisebox{0.2\h@uteurnumerateur}{\ensuremath{\scriptstyle{#1}}}%
  /%
  \raisebox{-0.2\h@uteurdenominateur}{\ensuremath{\scriptstyle{#2}}}%
  }
  {%scriptscriptstyle
    
\settoheight{\h@uteurnumerateur}{\ensuremath{\scriptscriptstyle{#1#2}}}%
    
\settoheight{\h@uteurdenominateur}{\ensuremath{\scriptscriptstyle{#1#2}}}%
  \raisebox{0.2\h@uteurnumerateur}{\ensuremath{\scriptscriptstyle{#1}}}%
  /%
  \raisebox{-0.2\h@uteurdenominateur}{\ensuremath{\scriptscriptstyle{#2}}}%
  }
}
\newcommand*{\quotientgauche}[2]{%...................................................................quotient.gauche
  \mathchoice%
  {%displaystyle
    \settoheight{\h@uteurnumerateur}{\ensuremath{\displaystyle{#1#2}}}%
    \settoheight{\h@uteurdenominateur}{\ensuremath{\displaystyle{#1#2}}}%
    \raisebox{-0.5\h@uteurnumerateur}{\ensuremath{\displaystyle{#1}}}%
    \mkern-3mu\diagdown\mkern-5mu%
    \raisebox{0.5\h@uteurdenominateur}{\ensuremath{\displaystyle{#2}}}%
  }
  {%textstyle
    \settoheight{\h@uteurnumerateur}{\ensuremath{\textstyle{#1#2}}}%
    \settoheight{\h@uteurdenominateur}{\ensuremath{\textstyle{#1#2}}}%
  \raisebox{-0.2\h@uteurnumerateur}{\ensuremath{\textstyle{#1}}}%
  \backslash%
  \raisebox{0.2\h@uteurdenominateur}{\ensuremath{\textstyle{#2}}}%
  }
  {%scriptstyle
    \settoheight{\h@uteurnumerateur}{\ensuremath{\scriptstyle{#1#2}}}%
    \settoheight{\h@uteurdenominateur}{\ensuremath{\scriptstyle{#1#2}}}%
  \raisebox{-0.2\h@uteurnumerateur}{\ensuremath{\scriptstyle{#1}}}%
  \backslash%
  \raisebox{0.2\h@uteurdenominateur}{\ensuremath{\scriptstyle{#2}}}%
  }
  {%scriptscriptstyle\bibitem{MR0985279}
    
\settoheight{\h@uteurnumerateur}{\ensuremath{\scriptscriptstyle{#1#2}}}%
    
\settoheight{\h@uteurdenominateur}{\ensuremath{\scriptscriptstyle{#1#2}}}%
  \raisebox{-0.2\h@uteurnumerateur}{\ensuremath{\scriptscriptstyle{#1}}}%
  \backslash%
  \raisebox{0.2\h@uteurdenominateur}{\ensuremath{\scriptscriptstyle{#2}}}%
  }
}
\newtheoremstyle{erdfn}% name
  {}%      Space above, empty = `usual value'
  {}%      Space below
  {\itshape}% Body font
  {}%         Indent amount (empty = no indent, \parindent = para indent)
  {\sffamily\itshape}% Thm head font
  {--}%        Punctuation after thm head
  { }%     Space after thm head: " " = normal interword space;
\newtheoremstyle{erthm}% name
  {}%      Space above, empty = `usual value'
  {}%      Space below
  {\itshape}% Body font
  {}%         Indent amount (empty = no indent, \parindent = para indent)
  {\sffamily\bfseries\itshape}% Thm head font
  {--}%        Punctuation after thm head
  { }%     Space after thm head: " " = normal interword space;
\newtheoremstyle{errem}% name
  {}%      Space above, empty = `usual value'
  {}%      Space below
  {}% Body font
  {}%         Indent amount (empty = no indent, \parindent = para indent)
  {\sffamily\itshape}% Thm head font
  {--}%        Punctuation after thm head
  { }%     Space after thm head: " " = normal interword space;
\theoremstyle{erthm}
\newtheorem{theorem}{Theorem}[section] 
\newtheorem{corollary}[theorem]{Corollary} 
\newtheorem{proposition}[theorem]{Proposition}
\newtheorem{lemma}[theorem]{Lemma}
\newtheorem*{hypothesisE}{Hypothesis $\NiceE(\K)$}
\newtheorem*{hypothesisHW}{Hypothesis $\NiceHW(\K)$}
\newtheorem*{hypothesisH}{Hypothesis $\NiceH(\K)$}
\newtheorem*{hypothesisMPHW}{Hypothesis $\MPHW(\K)$} 
\newtheorem*{hypothesisMPH}{Hypothesis $\MPH(\K)$} 
\theoremstyle{erdfn}
\newtheorem{definition}[theorem]{Definition}
\newtheorem*{notations}{Notation}
\newtheorem*{merci}{Acknowledgements}
\theoremstyle{errem}
\newtheorem{remark}[theorem]{Remark}
\numberwithin{equation}{section}
\title[Mean-periodicity and zeta functions]%
      {Mean-periodicity and zeta functions} 
\author[M. Suzuki]{Masatoshi Suzuki}
\author[G. Ricotta]{Guillaume Ricotta}
\author[I. Fesenko]{Ivan Fesenko}
\begin{abstract}
This paper establishes new bridges between number theory and modern harmonic analysis, namely between the class of complex functions, which contains zeta functions of arithmetic schemes and closed with respect to product and quotient, and the class of mean-periodic functions in several spaces of functions on the real line. In particular, the meromorphic continuation and  functional equation of the  zeta function of arithmetic scheme with its expected analytic shape is shown to  correspond to   mean-periodicity of a certain explicitly defined function associated to the zeta function. 
This correspondence can be viewed as an extension of the Hecke--Weil correspondence. 
 The case of elliptic curves over number fields and their regular models is treated in more details, and many other examples are included as well.
\end{abstract}
\begin{document}
%..........................................
\begin{merci}
The authors would like to thank A. Borichev for his enlightening lectures on mean-periodic functions in Nottingham.
We are grateful to him and to  D.~Goldfeld, N. Kuro\-kawa, 
J. Lagarias, K. Matsumoto for their remarks and discussions. 
This work was partially supported by ESPRC grant EP/E049109. The first author would like to thank the JSPS for its support. The second author is partially financed by the ANR project "Aspects Arithm\'etiques des Matrices Al\'eatoires et du Chaos Quantique".
\end{merci}
%..........................................
\section{Introduction}%
%
%\subsection{Mean-periodicity of boundary terms}%
%..................................................................

The Hecke--Weil correspondence between modular forms and Dirichlet series plays a pivotal role in number theory. 
In this paper we propose a new correspondence between mean-periodic functions in several spaces of functions on the real line and a class of  Dirichlet series which admit meromorphic continuation and functional equation. 
This class includes arithmetic zeta functions and their products, quotients, derivatives. 
In the first approximation the new  correspondence  can be stated as this: 
the rescaled completed  zeta-function of arithmetic scheme $S$  
has meromorphic continuation of expected analytic shape and satisfies 
the  functional equation $s \to  1-s$  with sign $\epsilon$
if and only if  the function 
$f(\exp(-t))-\varepsilon  \exp(t)f(\exp(t))$ is a mean-periodic function in 
the space of smooth functions on the real line of exponential growth,
where  $f(x)$ is the inverse Mellin transform of the product of an appropriate
sufficiently large positive power of the completed Riemann zeta function and 
the rescaled completed zeta function. For precise statements see  Subsection \ref{zetamp} below and other subsections of this Introduction. 

Whereas modular forms is a classical object investigated for  more than 150 years, 
the theory of mean-periodic functions  is a relatively recent part of functional analysis 
whose relations to  arithmetic zeta functions is studied and described in this paper. 
The text  presents the analytic aspects of the new correspondence  in accessible
and relatively self-contained  form. It is expected that the modern harmonic analysis and mean-periodic functions will have many applications to the study of zeta functions
of arithmetic schemes.

\subsection{Boundary terms in the classical one-dimensional case}\label{subsec_classical}%
The completed zeta function of a number field $\K$ is defined on $\Re{(s)}>1$ by
\begin{equation*}
\widehat{\zeta}_\K(s)\coloneqq \zeta_{\infty,\K}(s)\zeta_K(s)
\end{equation*}
where $\zeta_\K(s)$ is the classical Dedekind zeta function of $\K$ 
defined using the Euler product over all maximal ideal of the ring of integers of $\K$ 
and $\zeta_{\K,\infty}(s)$ is a finite product of $\fGamma$-factors defined in \eqref{eq_zeta_infinity}. It satisfies the integral representation (see \cite{MR0217026})
\begin{equation*}
\widehat{\zeta}_\K(s)=\int_{\mathbb{A}_\K^\times}f(x)\abs{x}^s\dd\mu_{\mathbb{A}_\K^\times}(x)
\end{equation*}
where $f$ is an appropriately normalized function in the Schwartz--Bruhat space on $\mathbb{A}_\K$ and $\abs{\;\;}$ stands for the module on the ideles $\mathbb{A}_\K^\times$ of $\K$. An application of analytic duality on $\K\subset \mathbb{A}_\K$ leads to the decomposition
\begin{equation*}
\widehat{\zeta}_\K(s)=\xi\left(f,s\right)+\xi\left(\widehat{f},1-s\right)+\omega_f(s)
\end{equation*}
where $\widehat{f}$ is the Fourier transform of $f$, $\xi(f,s)$ is an entire function and
\begin{equation*}
\omega_f(s)=\int_{0}^1h_f(x)x^s\frac{\dd x}{x}
\end{equation*}
with
\begin{equation*}
h_f(x)\coloneqq -\int_{\gamma\in\mathbb{A}_\K^1\left/\K^\times\right.}\int_{\beta\in\partial\K^\times}\left(f(x\gamma\beta)-x^{-1}\widehat{f}\left(x^{-1}\gamma\beta\right)\right)\dd\mu(\beta)\dd\mu(\gamma).
\end{equation*}
Every continuous function on $\mathbb{A}_\K$ which vanishes on $\K^\times$ vanishes on $\K$, and the boundary $\partial\K^\times$ of $\K^\times$ is $\K \setminus \K^\times=\{0\}$. The meromorphic continuation and the functional equation for $\widehat{\zeta}_\K(s)$ are equivalent to the meromorphic continuation and the functional equation for $\omega_f(s)$. Let us remark that $\omega_f(s)$ is the Laplace transform of $H_f(t)\coloneqq h_f\left(e^{-t}\right)$ thanks to the change of variable $x=e^{-t}$. The properties of the functions $h_f(x)$ and $H_f(t)$, which are called the \emph{boundary terms} for obvious reason,
are crucial in order to have a better understanding of $\omega_f(s)$. We have 
\begin{eqnarray*}
h_f(x) & = & -\mu\left(\mathbb{A}_\K^1\left/\K^\times\right.\right)\left(f(0)-x^{-1}\widehat{f}(0)\right), \\
H_f(t) & = & -\mu\left(\mathbb{A}_\K^1\left/\K^\times\right.\right)\left(f(0)-e^{t}\widehat{f}(0)\right)
\end{eqnarray*}
since  $\partial\K^\times$ is just the single point $0$, with  the appropriately normalized measure on the idele class group. 
As a consequence, $\omega_f(s)$ is a rational function of $s$ invariant with respect to $f\mapsto\widehat{f}$ and $s\mapsto(1-s)$. Thus, $\widehat{\zeta}_\K(s)$ admits a meromorphic continuation to $\C$ and satisfies a functional equation with respect to $s\mapsto(1-s)$.%
%......................................................................................
\subsection{Mean-periodicity and analytic properties of Laplace transforms}%
The previous discussion in the one-dimensional classical case naturally leads to the analytic study of Laplace transforms of specific functions. In particular, the meromorphic continuation and functional equation for Mellin transforms of real-valued functions $f$ on $\R_+^\times$ of rapid decay at $+\infty$ and polynomial order at $0^+$ is equivalent to the meromorphic continuation and functional equation of
\begin{equation*}
\omega_{f}(s)\coloneqq \int_0^1h_{f}(x)x^s\frac{\dd x}{x}=\int_0^{+\infty}H_f(t)e^{-st}\dd t
\end{equation*}
with
\begin{equation*}
h_{f}(x)=f(x)-\epsilon x^{-1}f\left(x^{-1}\right)
\end{equation*}
and $H_f(t)=h_{f}\left(e^{-t}\right)$, where  $\epsilon=\pm 1$ is the sign of the expected functional equation (see Section \ref{sec_mbp0}). The functions $h_{f}(x)$ and $H_f(t)$ are called the \emph{boundary terms} by analogy. 

The main question is the following one. \textit{What property of $h_f(x)$ or $H_f(t)$ implies the meromorphic continuation and functional equation for $\omega_{f}(s)$?} One sufficient answer is \textit{mean-periodicity}\footnote{The general theory of mean-periodicity is recalled in Section \ref{sec_mp}.}. Mean-periodicity is an easy generalization of periodicity; a function $g$ of a functional space $X$ is $X$-mean-periodic if the space spanned by its translates is not dense in $X$. When the Hahn-Banach theorem is available in $X$, $g$ is $X$-mean-periodic if and only if $g$ satisfies a \emph{convolution equation} $g\ast\varphi=0$ for some non-trivial element $\varphi$ of the dual space $X^\ast$ and a suitable convolution $\ast$. Such a convolution equation may be thought as a generalized differential equation. Often\footnote{In this case one says that the \emph{spectral synthesis} holds in $X$ (see Section \ref{spectral_synthesis}).} mean-periodic functions are  limits of \emph{exponential polynomials} satisfying the same convolution equation. Exponential polynomials were used already by Euler in his method of solving ordinary differential equations with constant coefficients. 

The functional spaces $X$ which will be useful  for number theory purposes are
\begin{equation*}
\mathfrak{X}_\times=\begin{cases}
\mathcal{C}({\R_+^\times}) & \text{the continuous functions on $\R_+^\times$ (Section \ref{ssec_C})}, \\
\mathcal{C}^\infty({\R_+^\times}) & \text{the smooth functions on $\R_+^\times$ (Section \ref{ssec_C})}, \\
\mathcal{C}_{\rm poly}^\infty({\R_+^\times}) & \text{the smooth functions on $\R_+^\times$ of at most polynomial growth (see \eqref{eq_poly-growth})}
\end{cases}
\end{equation*}
in the multiplicative setting, which is related to $h_f(x)$ and
\begin{equation*}
\mathfrak{X}_+=\begin{cases}
\mathcal{C}({\R}) & \text{the continuous functions on $\R$ (Section \ref{ssec_C})}, \\
\mathcal{C}^\infty({\R}) & \text{the smooth functions on $\R$ (Section \ref{ssec_C})}, \\
\mathcal{C}_{\rm exp}^\infty({\R}) & \text{the smooth functions on $\R$ of at most exponential growth (see \eqref{eq_exp-growth})}
\end{cases}
\end{equation*}
in the additive setting, which is related to $H_f(t)$. 
A nice feature is that the spectral synthesis holds in both $\mathfrak{X}_+$ and $\mathfrak{X}_\times$. 
The general theory of $\mathfrak{X}_\times$-mean-periodic functions shows that if $h_f(x)$ is $\mathfrak{X}_\times$-mean-periodic then $\omega_{f}(s)$ has a meromorphic continuation given by the \emph{Mellin--Carleman transform} of $h_f(x)$ and satisfies a functional equation. Similarly, if $H_f(t)$ is $\mathfrak{X}_+$-mean-periodic then $\omega_{f}(s)$ has a meromorphic continuation given by the \emph{Laplace--Carleman transform} of $H_f(t)$ and satisfies a functional equation. See Theorem \ref{thm_rmp} for an accurate statement. Note that in general $\omega_{f}(s)$ can have a meromorphic continuation to $\C$ and can satisfy a functional equation without the functions $h_f(x)$ and $H_f(t)$ being mean-periodic (see Remark \ref{rem_nmp} for explicit examples).
%
%......................................................................................
\subsection{Arithmetic  zeta functions and higher dimensional adelic analysis}\label{arzeta}%
For a scheme $S$  of dimension $n$ its arithmetic (Hasse)  \emph{zeta function} 
\begin{equation*}
\zeta_S(s)\coloneqq\prod_{x\in S_0} (1-\abs{k(x)}^{-s})^{-1}
\end{equation*}
whose Euler factors correspond to all closed points $x$ of $S$, say $x\in S_0$, with finite residue field of cardinality $\abs{k(x)}$, is the most fundamental object in number theory. Not much  is known about it when $n>1$.

Higher dimensional adelic analysis  aims to study the  zeta functions $\zeta_S(s)$ using integral representations on higher adelic spaces and analytic duality. It employs geometric structures of regular model of elliptic curve
which are difficult to see directly at purely analytic level. 
It is expected that the $n$-th power of the completed versions of the zeta functions $\zeta_S(s)$ times a product of appropriately completed and rescaled lower dimensional zeta functions can be written as an adelic integral over an appropriate higher dimensional adelic space against an appropriate translation invariant measure. Then a procedure similar to the one-dimensional procedure given above leads to the decomposition of the completed zeta functions into the sum of two entire functions and another term, which in characteristic zero is of the type
\begin{equation*}
\omega_{S}(s)\coloneqq\int_{0}^1h_S(x)x^s\frac{\dd x}{x}=\int_0^{+\infty}H_S(t)e^{-st}\dd t
\end{equation*}
where $h_S(x)$ and $H_S(t)\coloneqq h_S(e^{-t})$ are called the \emph{boundary terms} for the following reason. The functions $h_S(x)$ are expected to be an integral over the boundary of some higher dimensional space over some suitably normalised measure. Let us mention that the structure of both the boundary and the measure is quite mysterious. In particular, the boundary is expected to be a very large object, which is totally different from the one-dimensional situation. 
For the case of arithmetic surfaces $\mathcal E$ corresponding to a regular model of elliptic curve over a global field see \cite{Fe3}, \cite{Fe2}.

\subsection{Boundary terms of  zeta functions and mean-periodicity}\label{zetamp}%% 
The papers \cite{Fe2}, \cite{Fe3} suggested to use the theory of property of mean-periodicity of the functions $h_S(x)$ in an appropriate functional space for the study of the meromorphic continuation and the functional equation of the  zeta functions $\zeta_S(s)$. 
This work demonstrates novel important links between the world of certain Dirichlet series which include the  zeta functions $\zeta_S(s)$ coming from number theory and the class  of mean-periodic functions in appropriate functional spaces, in a self-contained manner independently of higher dimensional adelic analysis. 

Let us give a flavour of these links (see Theorem \ref{schemess} for a precise statement). 

Let $S$ be an arithmetic scheme 
proper flat over ${\rm Spec} \, \Z$
with smooth  generic fibre. 
We  prove that if its zeta function $\zeta_S(s)$ extends to a meromorphic function
on the complex plane with special (and typical in number theory)
analytic shape, and satisfies a functional equation with sign $\epsilon$,
 then there exists an integer $m_{\zeta_S}\geq 1$ such that for every integer $m\geq m_{\zeta_S}$  the (ample) boundary term $h_{\zeta_S,m}(x)$, given by
\begin{equation*}
h_{\zeta_S,m}(x)\coloneqq f_{\zeta_S,m}(x)-\epsilon x^{-1}f_{\zeta_S,m}(x^{-1})
\end{equation*}
where $f_{\zeta_S,m}(x)$ is the inverse Mellin transform of the $m$th power of the completed Riemann zeta function  times the completed and rescaled version of the  zeta function $\zeta_S(s)$,  
is $\mathcal{C}_{\rm poly}^\infty({\R_+^\times})$-mean-periodic. The proof uses some of analytic properties of arithmetic zeta functions and does not appeal to higher dimensional adelic analysis. Conversely, if the function  $h_{\zeta_S,m_{\zeta_S}}(x)$ is $\mathcal{C}_{\rm poly}^\infty({\R_+^\times})$-mean-periodic then $\zeta_S(s)$ has a meromorphic continuation to $\C$ and satisfies the expected functional equation, whose sign is $\epsilon$. Note that a similar statement holds for $H_{\zeta_S,m}(t)\coloneqq h_{\zeta_S,m}(e^{-t})$. 
In particular, as a consequence, we get a correspondence
\begin{equation*}
\mathcal{C}:\begin{array}{lcl}
S & \mapsto & h_{\zeta_S,m_{\zeta_S}}
\end{array}
\end{equation*}
from the set of arithmetic  schemes whose  zeta function $\zeta_S(s)$ has the expected analytic properties to the space of $\mathcal{C}_{\rm poly}^\infty({\R_+^\times})$-mean-periodic functions.

It should be mentioned that a study of relations with mean-periodic functions and  Dirichlet series which do not include arithmetic zeta functions was conducted in 
\cite{MR985279}.

\subsection{The case of  zeta functions of models of elliptic curves}%
Let $E$ be an elliptic curve over the number field $\K$ and $\mathfrak{q}_E$ its conductor. We denote $r_1$ the number of real archimedean places of $\K$ and $r_2$ the number of conjugate pairs of complex archimedean places of $\K$. A detailed study of essentially three objects associated to $E$ is done in this work namely
\begin{itemize}
\item
the $L$-function $L(E,s)$, whose conjectural sign of functional equation is $\omega_E=\pm 1$,
\item
the Hasse--Weil zeta function $\zeta_E(s)$. 
It can be defined as the product of factors over all valuations
of $\K$ each of which is the Hasse zeta function of the one-dimensional model corresponding to a local minimal Weierstrass 
equation of $E$ with respect to the valuation.
Taking into account the computation of the zeta functions for curves over finite fields we get
\begin{equation*}
\zeta_E(s)=\frac{\zeta_\K(s)\zeta_\K(s-1)}{L(E,s)},
\end{equation*}
\item
the Hasse zeta function $\zeta_\mathcal{E}(s)$ of a regular proper model $\mathcal{E}$ of $E$.
We get  
\begin{equation*}
\zeta_\mathcal{E}(s)=n_\mathcal{E}(s)\zeta_E(s)
\end{equation*}
where  the factor 
$n_\mathcal{E}(s)$
is the product of finitely many, say $J$,  zeta functions of affine lines over finite extensions
of the residue field of bad reduction primes. The square of $\zeta_\mathcal{E}(s)$
 occurs in the two-dimensional zeta integral defined and studied in
the two dimensional adelic analysis. 
\end{itemize}
The boundary terms associated to these particular Hasse zeta functions are given by
\begin{eqnarray*}
h_E(x) & \coloneqq & f_{Z_E}(x)-(-1)^{r_1+r_2}\omega_Ex^{-1}f_{Z_E}\left(x^{-1}\right), \\
H_E(t) & \coloneqq & h_E\left(e^{-t}\right), \\
h_\mathcal{E}(x) & \coloneqq & f_{Z_\mathcal{E}}(x)-(-1)^{r_1+r_2+J}\omega_Ex^{-1}f_{Z_\mathcal{E}}\left(x^{-1}\right), \\
H_\mathcal{E}(t) & \coloneqq & h_\mathcal{E}\left(e^{-t}\right)
\end{eqnarray*}
where $f_{Z_E}(x)$ is the inverse Mellin transform of $Z_E(s)\coloneqq \Lambda_\K(s)\left(\text{N}_{\K\vert\Q}(\mathfrak{q}_E)^{-1}\right)^{2s/2}\zeta_E(2s)$ and $f_{Z_\mathcal{E}}(x)$ is the inverse Mellin transform of $Z_\mathcal{E}(s)\coloneqq \left(\prod_{1\leq i\leq I}\Lambda_{\K_i}(s)\right)\left(c_\mathcal{E}^{-1}\right)^{2s/2}\zeta_\mathcal{E}(2s)$, $I\geq 1$ being the number of chosen horizontal curves in the two-dimensional zeta integral briefly mentioned before. It is shown in this paper that
\begin{itemize}
\item
if the completed $L$-function $\Lambda(E,s)$ can be extended to a meromorphic function
of expected analytic shape  on $\C$ and satisfies the  functional equation then  $h_E(x)$ and $h_\mathcal{E}(x)$ are $\mathcal{C}_{\rm poly}^\infty({\R_+^\times})$-mean-periodic\footnote{Note that the $\mathcal{C}_{\rm poly}^\infty({\R_+^\times})$-mean-periodicity of $h_E(x)$ (respectively $h_\mathcal{E}(x)$) is equivalent to the $\mathcal{C}_{\rm exp}^\infty({\R})$-mean-periodicity of $H_E(t)$ (respectively $H_\mathcal{E}(t)$).},
\item
if $h_E(x)$ is $\mathcal{C}_{\rm poly}^\infty({\R_+^\times})$-mean-periodic or $h_\mathcal{E}(x)$ is $\mathcal{C}_{\rm poly}^\infty({\R_+^\times})$-mean-periodic then the Hasse--Weil zeta function $\zeta_E(s)$ can be extended to a meromorphic function on $\C$ and satisfies the expected functional equation and the  zeta function $\zeta_\mathcal{E}(s)$ can be extended to a meromorphic function on $\C$ and satisfies the expected functional equation. 
\end{itemize}
See Theorem \ref{prop_HW1} and Theorem \ref{prop_H1} for complete statements. 
Very briefly, the  proofs employ the following three properties:
\begin{enumerate}
\item
polynomial bound in $t$ for $\abs{L(E,\sigma+it)}$ in vertical strips; 
\item
exponential decay of the gamma function in vertical strips;
\item
polynomial bound in $t_n$ for $\abs{L(E,\sigma+it_n)}^{-1}$ in vertical strips
for certain sequences $t_n$ tending to infinity.
\end{enumerate}
The first and second properties are essential in the proof of Theorem \ref{thm_303},
the last property is used in the proof of Theorem \ref{thm_302}. %

\smallskip

The right spaces in which the functions are mean-periodic is one of the results of this paper. 
It is explained in Remark \ref{rem_NOT} that the function $h_E(x)$ cannot be $\mathcal{C}({\R_+^\times})$-mean periodic or $\mathcal{C}^\infty({\R_+^\times})$-mean-periodic and that the function $H_E(t)$ cannot be $\mathcal{C}({\R})$-mean periodic or $\mathcal{C}^\infty({\R})$-mean-periodic. Remark \ref{rem_info_zero} focuses on the fact that the function $h_E(x)$ encodes  in its \emph{Fourier series} some information on the poles of $\zeta_E(s)$, which are mainly the non-trivial zeros of $L(E,s)$. 

\smallskip 

The zeta functions of arithmetic schemes is the fundamental object in number theory.
The pioneering papers of Hecke and Weil initiated a fruitful development in number theory,
which emphasized the use of modular functions in the study of $L$-functions. 
Every zeta function of arithmetic scheme is the product of  first or  minus first powers 
of some $L$-functions which are called the $L$-factors of the zeta function. 
Currently,  modularity of all $L$-factors of the zeta function of an arithmetic scheme  of dimension greater than one is known only in very few  cases. 
Several decades of the study led to the Wiles and others proof of modularity of the $L$-function of elliptic curves over rationals. An extension of that method to elliptic curves over totally real fields would lead to the meromorphic continuation and functional equation of their $L$-function. 
 In particular, in all such cases we get  mean-periodicity
of the function $H_{\mathcal E}$ associated to the zeta function of the curve over those number fields, see \ref{prop_HW1}. 
Unfortunately, the method of the proof of the Wiles theorem does not seem to be extendable to handle the general case of elliptic curves over number fields. 

This paper suggests to study the conjectural meromorphic continuation and functional equation of zeta functions of arithmetic schemes without dealing with the 
$L$-factors of the zeta functions and without proving their modularity.
Instead it relates those properties of the zeta functions  to  mean-periodicity
of associated functions. 
We expect mean-periodicity to be very useful in the study of the zeta function of arithmetic schemes. We hope it is easier to establish  mean-periodicity
of associated functions for the whole zeta function than  to prove automorphic properties
of all $L$-factors of the zeta function. 
In particular,  two-dimensional adelic analysis uses underlying geometric structures
and is expected to help  to prove mean-periodicity of $H_{\mathcal E}$ for all elliptic curve over arbitrary global fields without any restrictions.

\subsection{Other results}%
The location of poles of $\zeta(s)$ and single sign property of $h_{\zeta}(x)$, whose study was initiated in  \cite[Section 4.3]{Fe2}, \cite[Section8]{Fe3} and \cite{Su1}, 
is the subject of  a general Proposition \ref{mono}. 

The poles of the inverse of the Dedekind zeta functions are studied from this point of view 
 in Proposition \ref{prop_502} and Proposition \ref{prop_ss}.

Products and quotients of completed $L$-functions associated to cuspidal automorphic representations are briefly discussed in Section \ref{sec_cusp}.  

From Eisenstein series we get continuous families of mean-periodic functions constructed in Section \ref{sec_Eis}, which leads to  several interesting questions. 

Corollary \ref{cor_201}  contains a new general explicit formula which involves the sum over all integers and thus differs from the standard explicit formulas for $L$-functions.

A variety of open interesting questions naturally arizes. One of them is this:
in the context of automorphic representations, there are many operations (tensor product, symmetric power, exterior power, functoriality), which give rise to different automorphic $L$-functions. Is it possible to translate these operations in the world of mean-periodic functions? Another fundamental question is to study 
 the image of the map $\mathcal{C}$. 
%The detailed study of models of elliptic curves over general global field, which is done in this %work, gives the feeling that it may be possible to  prove the analytic properties of  zeta functions %of arithmetic schemes of dimension $d+1$ assuming the same analytic properties for  zeta %functions of arithmetic schemes of dimension $d$. This general feeling has already been %highlighted in the context of $L$-functions, see \cite{MR2119720}, \cite{MR2207235} and %\cite{MR2296066} for instance.
%.................
%.................................................
%\subsection{General results on mean-periodicity}%
%.....................................................................

%......................................................................
\subsection{Organisation of the paper}%
%%!
The general background on mean-periodic functions is given in Section \ref{sec_mp}. In Section \ref{sec_mbp0}, we give some sufficient conditions, which implies that the Mellin transforms of real-valued function on $\R_+^\times$ of rapid decay at $+\infty$ and polynomial order at $0^+$ have a meromorphic continuation to $\C$ and satisfy a functional equation (see Theorem \ref{thm_rmp}). All the general results on mean-periodic functions are proved in Section \ref{sec_results}. In the next two sections, we go in the opposite direction, deducing from analytic properties of zeta functions mean-periodicity of associated functions. Various links between  mean-periodicity and analytic properties of zeta functions of schemes, in particular zeta functions of models of elliptic curves, are shown in Section \ref{sec_zeta}. Section \ref{sec_others} provides many further instances of mean-periodic functions arising from number theory
including those coming from Dedekind zeta functions, standing in the denominator, and families of mean-periodic functions.
Appendix \ref{ap_analytic} contains an analytic estimate for general $L$-functions, which enables us to apply the general results on  mean-periodicity in relevant cases for number theory.
%..............................................................................
\begin{notations}
$\Z_+$ stands for the non-negative integers and $\R_+^\times$ for the positive real numbers. If $k$ is an integer and $x$ is a positive real number then $\log^k{(x)}\coloneqq \left(\log{x}\right)^k$.
\end{notations}
%$\Z_+^*$ stands for the positive integers and 
%
%
%-----------------------------------------------------------------------------------
\section{Mean-periodic functions}\label{sec_mp}%
In this section, we give some information on mean-periodic functions, which first appeared in Delsarte \cite{Del} but whose theory has been initially developed by Schwartz in \cite{MR0023948}.
The general theory of mean-periodic functions can be found in Kahane \cite{Kah}, especially the theory of continuous mean-periodic functions of the real variable.  \cite[Section 11.3]{Ni} is a complete survey on the subject. \cite[Chapter 6]{MR1344448} is a nice reference for the smooth mean-periodic functions of the real variable whereas \cite[Chapter 4]{MR1326616} deals with smooth mean-periodic functions of the real variable but focuses on convolution equations. We also suggest the reading of \cite{MR570178}, \cite[Pages 169--181]{Me} and \cite{MR1039581}.
\subsection{Generalities}\label{spectral_synthesis}%
Three definitions of mean-periodicity are given in a very general context and links between them are mentioned. 
Let $X$ be a locally convex separated topological ${\C}$-vector space. 
Such space is specified by a suitable family of seminorms. In this paper, it will always be a Fr\'echet space or the inductive or projective limit of Fr\'echet spaces. 
Let  $G$ be a locally compact topological abelian group. Denote by $X^\ast$ the topological dual space of $X$ for some specified topology. We assume that there is a (continuous) \emph{representation}
\begin{eqnarray*}
\tau:G & \rightarrow & {\rm End}(X) \\
g & \mapsto & \tau_g.
\end{eqnarray*}
For $f \in X$, we denote by $\mathcal{T}(f)$ the closure of the $\C$-vector space spanned by $\{\tau_g(f), g \in G\}$ namely
\begin{equation*}
\mathcal{T}(f)\coloneqq \overline{\text{Vect}_\C\left(\left\{\tau_g(f), g\in G\right\}\right)}.
\end{equation*}
\begin{definition}\label{def_1}
$f\in X$ is \textit{$X$-mean-periodic} if $\mathcal{T}(f)\not=X$.
\end{definition}
Let us assume that there exists an \emph{involution} map
\begin{eqnarray*}
\check\;\;\;: X & \rightarrow & X \\
f & \mapsto & \check{f}.
\end{eqnarray*}
For $f \in X$ and $\varphi \in X^\ast$,
we define the \emph{convolution} $f\ast\varphi:G\rightarrow\C$ by
\begin{equation*}
(f\ast\varphi)(g)\coloneqq \langle\tau_g\check{f},\varphi\rangle
\end{equation*}
where $\langle ~,~ \rangle$ is the \emph{pairing} on $X \times X^\ast$.
\begin{definition}\label{def_2}
$f \in X$ is \textit{$X$-mean-periodic} if there exists a non-trivial element $\varphi$ of $X^\ast$ satisfying $f \ast \varphi=0$.
\end{definition}
Finally, let us assume that
\begin{itemize}
\item
$G=\R$ (respectively $\R_+^\times$),
\item
$X$ is a ${\C}$-vector space of functions or measures or distributions on $G$,
\item
there exists an open set $\Omega\subset\C$ such that the \emph{exponential polynomial} $P(t) e^{\lambda t}$ (respectively $x^\lambda P(\log x)$) belongs to X for any polynomial $P$ with complex coefficients and any $\lambda\in\Omega$.
\end{itemize}
\begin{definition}\label{def_3}
$f \in X$ is \textit{$X$-mean-periodic} if $f$ is a limit (with respect to the topology of $X$) of a sum of exponential polynomials belonging to $\mathcal{T}(f)$ .
\end{definition}
%%!
The first and second definitions are equivalent in a large class of spaces $X$ where the Hahn-Banach theorem is applicable. The equivalence between the first and third definitions depends on $X$ and is related to the following spectral problems (see \cite[Section 2.3]{Kah}). The \emph{spectral synthesis} holds in $X$ if
\begin{equation*}
\mathcal{T}(f)=\begin{cases}
\overline{\text{Vect}_\C\left(\left\{P(t)e^{\lambda t}\in \mathcal{T}(f), \lambda\in\Omega\right\}\right)} & \text{if $G=\R$,} \\
\overline{\text{Vect}_\C\left(\left\{x^\lambda P(\log{x})\in \mathcal{T}(f), \lambda\in\Omega\right\}\right)} & \text{if $G=\R_+^\times$.}
\end{cases}
\end{equation*}
for any $f$ in $X$ satisfying $\mathcal{T}(f)\neq X$. A representation of a mean-periodic function as a limit of exponential polynomials generalizes the Fourier series representation for continuous periodic functions. If $X=\mathcal{C}({\R})$ (see \cite[Sections 4 and 5]{Kah} and \cite{Me}) or $\mathcal{C}^\infty({\R})$ (see \cite{MR0023948}) then the three definitions are equivalent since the spectral synthesis holds in these spaces. 
The \emph{spectral analysis} holds in $X$ if for any $f$ in $X$ there exists a finite-dimensional translation invariant subspace $X_0$ of $X$ contained in $\mathcal{T}(f)$. For instance, $\mathcal{T}(t^ne^{\lambda t})$ is a finite-dimensional invariant subspace if $G=\R$.

%-------------------------------------------------------------
\subsection{Quick review on continuous and smooth mean-periodic functions}\label{ssec_C}%
Let $X=\mathcal{C}(\R)$ be the space of continuous functions on $\R$ with the compact uniform convergence topology. 
It is a Fr\'echet space, hence completed and locally convex, whose dual space $X^\ast=M_0(\R)$ is the space of compactly supported Radon measures. The pairing between $f\in\mathcal{C}(\R)$ and $\mu\in M_0(\R)$ is given by
\begin{equation*}
\langle f,\mu \rangle=\int_{\R} f\dd\mu.
\end{equation*}
Let $G=\R$. The \emph{additive involution} is defined by
\begin{equation}\label{eq_involution}
\forall x\in\R,\forall f\in X, \quad \check{f}(x)=f(-x)
\end{equation}
and the \emph{additive representation} $\tau^+$ by
\begin{equation}\label{eq_representation_+}
\forall(x,y)\in\R^2,\forall f\in X,\quad\tau^+_x(f)(y)\coloneqq f(y-x).
\end{equation}
such that the \emph{additive convolution} $\ast_+$ is
\begin{equation*}
\forall x\in\R,\quad(f\ast_+\mu)(x)\coloneqq \int_{\R}f(x-y)\dd\mu(y).
\end{equation*}
The space $\mathcal{C}(\R)$ has two important properties. Firstly, the definitions \ref{def_1}, \ref{def_2} and \ref{def_3} are equivalent. In other words, the spectral synthesis holds for this space (see \cite[Sections 4 and 5]{Kah}, \cite{Me}). 
Secondly, it is possible to develop the theory of \emph{Laplace--Carleman transforms} of $\mathcal{C}(\R)$-mean-periodic functions. It turns out that Laplace--Carleman transforms in mean-periodicity are as important as Fourier transforms in harmonic analysis. Their theory is developed in Section \ref{ssec_La} in a more general context but let us justify a little bit the analogy. 
If $f$ is any \emph{non-trivial} $\mathcal{C}(\R)$-mean-periodic function then its Laplace--Carleman transform $\mathsf{LC}(f)(s)$ is a meromorphic function on $\C$ having at least one pole, otherwise $f=0$. In addition, its denominator belongs to the Cartwright class $\mathsf{C}$ defined in \eqref{eq_C}, for which the distribution of zeros is quite regular (\cite[Chapter 17]{Le}). The set of all poles of $\mathsf{LC}(f)(s)$ with multiplicity is called the \emph{spectrum} of $f$. 
The exponential polynomials belonging to $\mathcal{T}(f)$ are completely determined by the spectrum of $f$ and $f$ is characterized by the principal parts of the poles of $\mathsf{LC}(f)(s)$ (see \cite[Sections 4-6]{Kah}). 
Let us just mention that if $X=\mathcal{C}^\infty(\R)$, the space of smooth functions on $\R$ with the compact uniform convergence topology, then all what has been said above for $\mathcal{C}(\R)$ holds. In particular, the spectral synthesis holds in $\mathcal{C}^\infty(\R)$ (see \cite{MR0023948}). Finally, we would like to say that the spaces of continuous functions $\mathcal{C}(\R_+^\times)$ and smooth functions $\mathcal{C}^\infty(\R_+^\times)$ on $\R_+^\times$ share the same properties than $\mathcal{C}(\R)$ and $\mathcal{C}^\infty(\R)$. In particular, we can develop the theory of Mellin-Carleman transform via the homeomorphisms
\begin{eqnarray*}
\mathcal{H}_\mathcal{C}:\mathcal{C}(\R_+^\times) & \rightarrow & \mathcal{C}(\R) \\
f(x) & \mapsto & f\left(e^{-t}\right)
\end{eqnarray*}
and
\begin{eqnarray*}
\mathcal{H}_{\mathcal{C}^\infty}:\mathcal{C}^\infty({\R}_+^\times) & \rightarrow & \mathcal{C}^\infty(\R) \\
f(x) & \mapsto & f\left(e^{-t}\right).
\end{eqnarray*}
This will be done in a more general context in Section \ref{sse_Me}.
%-------------------------------------------------------------------------
\subsection{Some relevant spaces with respect to mean-periodicity}%
In this section, we introduce several spaces for which the elements of the dual space are not necessarily compactly supported. 
Let $\mathcal{C}_{\exp}^\infty({\R})$ be the $\C$-vector space of smooth functions on ${\R}$, which have at most \emph{exponential growth} at $\pm\infty$ namely
\begin{equation}\label{eq_exp-growth}
\forall n\in\Z_+,\exists m\in\Z_+,\quad f^{(n)}(x)=O\left(\exp{(m\abs{x})}\right)
\end{equation}
as $x\to\pm\infty$. This space is a $\mathcal{LF}$-space, namely an inductive limit of Fr\'echet spaces $(F_m)_{m\geq 1}$, the topology on each $F_m$ ($m\geq 1)$ being induced from the following family of seminorms
\begin{equation*}
\Vert f \Vert_{m,n}=\sup_{x\in\R}\abs{f^{(n)}(x)\exp{(-m\abs{x})}}
\end{equation*}
for any $n\geq 0$. Let $\mathcal{C}_{\rm poly}^\infty({\R_+^\times})$ be the $\C$-vector space
of smooth functions on $\R_+^\times$, which have at most \emph{polynomial growth}  at $0^+$ and at $+\infty$ namely
\begin{equation}\label{eq_poly-growth}
\forall n\in\Z_+,\exists m\in\Z,\quad f^{(n)}(t)=O\left(t^m\right)
\end{equation}
as $t\to+\infty$ and $t\to0^+$.  The space $\mathcal{C}_{\rm poly}^\infty({\R_+^\times})$ is endowed with a topology such that the bijection
\begin{eqnarray*}
\mathcal{H}_{\mathcal{C}^\infty_\ast}:\mathcal{C}_{\exp}^\infty({\R}) & \rightarrow & \mathcal{C}_{\rm poly}^\infty({\R_+^\times}) \\
f(t) & \mapsto & f(-\log{x})
\end{eqnarray*}
becomes a homeomorphism. Let $\mathcal{C}_{\exp}^\infty({\R})^\ast$ be the dual space of
$\mathcal{C}_{\exp}^\infty({\R})$ equipped with the weak $\ast$-topology. The dual space $\mathcal{C}_{\exp}^\infty({\R})^\ast$ (respectively $\mathcal{C}_{\rm poly}^\infty({\R_+^\times})^\ast$)
is considered as a space of distributions on $\R$ (respectively $\R_+^\times$) having an \emph{over exponential decay} (respectively \emph{over polynomial decay}) in a suitable sense. In fact, every smooth function $g$ on $\R$ satisfying $\abs{g(x)}=O(\exp{(-a\abs{x})})$ for every real number $a>0$ is identified with an element of $\mathcal{C}_{\exp}^\infty({\R})^\ast$. Let $\langle ~, ~\rangle$ be the pairing between $\mathcal{C}_{\exp}^\infty({\R})^\ast$ and $\mathcal{C}_{\exp}^\infty({\R})$ namely $\langle\varphi,f\rangle=\varphi(f)$ for  $\varphi\in\mathcal{C}_{\exp}^\infty({\R})^\ast$ and  $f\in\mathcal{C}_{\exp}^\infty({\R})$. The \emph{additive convolution} $f\ast_+\varphi$ of  $f\in \mathcal{C}_{\exp}^\infty({\R})$ and
 $\varphi\in \mathcal{C}_{\exp}^\infty({\R})^\ast$ is defined as
\begin{equation*}
\forall x\in\R,\quad(f\ast_+\varphi)(x)\coloneqq \langle\varphi,\tau_x^+\check{f}\rangle
\end{equation*}
where the definitions of the involution $\check{}$ and of the representation $\tau^+$ are adapted from \eqref{eq_involution} and \eqref{eq_representation_+}. One can define the \emph{multiplicative convolution} $f\ast_\times\varphi:\R_+^\times\rightarrow\C$ of $f \in \mathcal{C}_{\rm poly}^\infty({\R_+^\times})$ and
 $\varphi \in \mathcal{C}_{\rm poly}^\infty({\R_+^\times})^\ast$ thanks to the homeomorphism $\mathcal{H}_{\mathcal{C}^\infty_\ast}$.\newline\newline
Let $\Schwartz(\R)$ be the \emph{Schwartz space} on $\R$ which consists of smooth functions on $\R$ satisfying
\begin{equation}
\Vert f\Vert_{m,n}=\sup_{x\in\R}\abs{x^mf^{(n)}(x)}<\infty
\end{equation}
for all $m$ and $n$ in $\Z_+$. It is a Fr\'echet space over the complex numbers with the topology induced from the family of seminorms $\Vert ~\Vert_{m,n}$.  
Let us define the Schwartz space $\Schwartz(\R_+^\times)$ on ${\R}_+^\times$ and its topology via the homeomorphism
\begin{eqnarray*}
\mathcal{H}_\Schwartz:\Schwartz(\R) & \rightarrow & \Schwartz(\R_+^\times) \\
f(t) & \mapsto & f(-\log{x}).
\end{eqnarray*}
The \emph{strong Schwartz space} $\mathbf{S}({\R}_+^\times)$ is defined by
%\textbf { $log$ or $exp$? }
\begin{equation}\label{eq_strong}
\mathbf{S}({\R}_+^\times)\coloneqq \bigcap_{\beta\in\R}\left\{f:\R_+^\times\to\C,\left[x \mapsto x^{-\beta}f(x)\right]\in\Schwartz(\R_+^\times)\right\}.%\coloneqq \bigcap_{\beta\in\R}t^{-\beta}\Schwartz(\R_+^\times)
\end{equation}
One of the family of seminorms on $\mathbf{S}({\R}_+^\times)$ defining its topology is given by
\begin{equation} \label{901}
\Vert f \Vert_{m,n}=\sup_{x\in{\R}_+^\times}\abs{x^mf^{(n)}(x)}
\end{equation}
for  $m\in\Z$ and  $n\in\Z_+$. The strong Schwartz space $\mathbf{S}({\R}_+^\times)$ is a Fr\'echet space over the complex numbers where the family of seminorms defining its topology is given in \eqref{901}. In fact, it is a projective limit of Fr\'echet spaces $(F_m)_{m\geq 1}$ since a decreasing intersection of Fr\'echet spaces is still a Fr\'echet space. This space is closed under the multiplication by a complex number and the pointwise addition and multiplication (\cite{Mey1}). 
The strong Schwartz space $\mathbf{S}(\R)$ and its topology are defined via the homeomorphism
\begin{equation}\label{eq_strong_+}
\aligned
\mathcal{H}_\mathbf{S}:\mathbf{S}({\R}_+^\times) & \rightarrow  \mathbf{S}(\R) \\
f(x) & \mapsto  f\left(e^{-t}\right).
\endaligned 
\end{equation}
Let us mention that the Fourier transform is \emph{not} an automorphism of $\mathbf{S}(\R)$ since the Fourier transform of an element $f \in \mathbf{S}({\R})$ does not necessary belong to $\mathbf{S}(\R)$. This feature is different from what happens in $\Schwartz(\R)$. Let $\mathbf{S}(\R_+^\times)^\ast$ be the dual space of $\mathbf{S}(\R_+^\times)$ equipped with the weak $\ast$-topology, whose elements are called \emph{weak-tempered distributions}. The pairing between $\mathbf{S}(\R_+^\times)$ and $\mathbf{S}(\R_+^\times)^\ast$ is denoted $\langle ~, ~\rangle$ namely
\begin{equation*}
\langle f,\varphi\rangle=\varphi(f)
\end{equation*}
for  $f\in\mathbf{S}(\R_+^\times)$ and  $\varphi\in\mathbf{S}(\R_+^\times)^\ast$. A linear functional $\varphi$ on $\mathbf{S}(\R_+^\times)$ is a weak-tempered distribution if and only if
the condition $\lim_{k\to+\infty}\Vert f_k\Vert_{m,n}=0$ for all multi-indices $m$, $n$ implies
$\lim_{k\to+\infty}\langle f_k,\varphi\rangle=\lim_{k\to+\infty}\varphi(f_k)=0$. The \emph{multiplicative representation} $\tau^\times$ of $\R_+^\times$ on $\mathbf{S}({\R_+^\times})$ is defined by
\begin{equation*}
\forall x\in\R_+^\times,\quad\tau_x^\times f(y)\coloneqq f(y/x)
\end{equation*}
and the \emph{multiplicative convolution} $f\ast_\times\varphi$ of  $f\in\mathbf{S}({\R_+^\times})$ and
 $\varphi\in\mathbf{S}(\R_+^\times)^\ast$ by
\begin{equation*}
\forall x\in\R_+^\times,\quad(f\ast_\times\varphi)(x)=\langle\tau_x\check{f},\varphi\rangle
\end{equation*}
where the \emph{multiplicative involution} is given by $\check{f}(x)\coloneqq f(x^{-1})$. One can define the \emph{additive convolution} $f\ast_+\varphi:\R\rightarrow\C$ of  $f \in \mathbf{S}(\R)$ and
 $\varphi\in\mathbf{S}(\R)^\ast$ thanks to the homeomorphism $\mathcal{H}_\mathcal{\mathbf{S}}$. The \emph{multiplicative dual representation} $\tau^{\times,\ast}$ on $\mathbf{S}(\R_+^\times)^\ast$ is defined by
\begin{equation*}
\langle f,\tau_x^{\times,\ast}\varphi\rangle\coloneqq \langle\tau_x^\times f,\varphi\rangle.
\end{equation*}
One can define the \emph{additive dual representation} $\tau^{+,\ast}$ on $\mathbf{S}(\R)^\ast$ thanks to the homeomorphism $\mathcal{H}_\mathcal{\mathbf{S}}$. If $V$ is a $\C$-vector space then the bidual space $V^{\ast\ast}$ (the dual space of $V^\ast$ with respect to the weak $\ast$-topology on $V^\ast$) is identified with $V$ in the following way. For a continuous linear functional $F$ on $V^\ast$ with respect to its weak $\ast$-topology, there exists $v \in V$ such that $F(v^\ast)=v^\ast(v)$ for every $v^\ast \in V^\ast$. Therefore, we do not distinguish the pairing on $V^{\ast\ast}\times V^{\ast}$ from the pairing on $V\times V^\ast$. Under this identification, it turns out that
\begin{equation*}
\mathcal{C}_{\exp}^\infty({\R}) ~\prec~ \mathbf{S}({\R})^\ast, \quad
\mathcal{C}_{\rm poly}^\infty({\R_+^\times}) ~\prec~ \mathbf{S}({\R_+^\times})^\ast
\end{equation*}
and
\begin{equation*}
\mathbf{S}({\R}) ~\prec~ \mathcal{C}_{\exp}^\infty({\R})^\ast, \quad
\mathbf{S}({\R_+^\times}) ~\prec~ \mathcal{C}_{\rm poly}^\infty({\R_+^\times})^\ast,
\end{equation*}
where $A\prec B$ means that $A$ is a subset of $B$ and that the injection map $A\hookrightarrow B$ is continuous.
\subsection{Mean-periodic functions in these relevant spaces}%
In this section, $\mathfrak{X}$ always stands for one of the spaces $\mathcal{C}_{\exp}^\infty({\R})$, $\mathcal{C}_{\rm poly}^\infty({\R_+^\times})$, $\mathbf{S}({\R})^\ast$ and $\mathbf{S}({\R_+^\times})^\ast$.
\begin{definition}[additive]
Let $\mathfrak{X}$ be $\mathcal{C}_{\exp}^\infty({\R})$ or $\mathbf{S}({\R})^\ast$. $x\in\mathfrak{X}$ is said to be $\mathfrak{X}$-mean-periodic if there exists a non-trivial element $x^\ast$ in $\mathfrak{X}^\ast$ satisfying $x\ast_+x^\ast=0$.
\end{definition}
\begin{definition}[multiplicative]
Let $\mathfrak{X}$ be $\mathcal{C}_{\rm poly}^\infty({\R_+^\times})$ or $\mathbf{S}({\R_+^\times})^\ast$.  $x\in\mathfrak{X}$ is said to be $\mathfrak{X}$-mean-periodic if there exists a non-trivial element $x^\ast$ in $\mathfrak{X}^\ast$ satisfying $x\ast_\times x^\ast=0$.
\end{definition}
For $x\in\mathfrak{X}$, we denote by $\mathcal{T}(x)$ the closure of the $\C$-vector space spanned by $\{\tau_g(x), g \in G\}$ where
\begin{equation*}
\tau=
\begin{cases}
\tau^+ & \text{if $G=\R$ and $\mathfrak{X}=\mathcal{C}_{\exp}^\infty({\R})$,} \\
\tau^{+,\ast} & \text{if $G=\R$ and $\mathbf{S}({\R})^\ast$,} \\
\tau^\times & \text{if $G=\R_+^\times$ and $\mathfrak{X}=\mathcal{C}_{\rm poly}^\infty({\R_+^\times})$,} \\
\tau^{\times,*} & \text{if $G=\R_+^\times$ and $\mathfrak{X}=\mathbf{S}({\R_+^\times})^\ast$.}
\end{cases}
\end{equation*}
Hahn-Banach theorem leads to another definition of $\mathfrak{X}$-mean-periodic functions.
\begin{proposition}\label{proposition_td_mp}%
An element $x\in \mathfrak{X}$ is $\mathfrak{X}$-mean-periodic if and only if $\mathcal{T}(x)\not=\mathfrak{X}$.
\end{proposition}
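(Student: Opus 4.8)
The plan is to prove Proposition \ref{proposition_td_mp} by reducing it to the equivalence of Definition \ref{def_1} and Definition \ref{def_2} in the abstract setting of Section \ref{spectral_synthesis}, and then invoking the Hahn--Banach theorem in the appropriate topological vector space $\mathfrak{X}$. First I would observe that in each of the four cases, $\mathcal{T}(x)$ is by definition the closure of $\mathrm{Vect}_\C(\{\tau_g(x):g\in G\})$, so that $\mathcal{T}(x)\neq\mathfrak{X}$ means precisely that the translates of $x$ do not span a dense subspace. This is the formulation of $\mathfrak{X}$-mean-periodicity in the sense of Definition \ref{def_1}; what must be shown is that it coincides with the convolution-equation formulation used in the two Definitions just above the Proposition.

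The forward direction is the easy one: if $x$ is $\mathfrak{X}$-mean-periodic in the convolution sense, say $x\ast x^\ast=0$ for a nonzero $x^\ast\in\mathfrak{X}^\ast$ (with $\ast=\ast_+$ or $\ast_\times$ according to the case), then unwinding the definition of the convolution, $(x\ast x^\ast)(g)=\langle\tau_g\check x,x^\ast\rangle=0$ for all $g\in G$; replacing $\check x$ by $x$ (the involution is an automorphism of $\mathfrak{X}$, so this only relabels the element), we get that $x^\ast$ annihilates every translate $\tau_g(x)$, hence annihilates $\mathrm{Vect}_\C(\{\tau_g(x)\})$, and by continuity of $x^\ast$ it annihilates the closure $\mathcal{T}(x)$. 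Since $x^\ast\neq 0$ it cannot annihilate all of $\mathfrak{X}$, so $\mathcal{T}(x)\neq\mathfrak{X}$. For the reverse direction, suppose $\mathcal{T}(x)\neq\mathfrak{X}$. Then $\mathcal{T}(x)$ is a proper closed subspace of $\mathfrak{X}$, and the Hahn--Banach theorem (applicable since each $\mathfrak{X}$ is locally convex and separated --- it is a Fr\'echet space, an $\mathcal{LF}$-space, or the weak $\ast$-dual of such a space) produces a nonzero continuous linear functional $x^\ast\in\mathfrak{X}^\ast$ vanishing on $\mathcal{T}(x)$, in particular on every $\tau_g(x)$. Reading the chain of equalities backwards gives $(x\ast x^\ast)(g)=\langle\tau_g\check x,x^\ast\rangle=0$ for all $g$ (again using that $\check{}$ is an automorphism to pass between $x$ and $\check x$), i.e. $x\ast x^\ast=0$, so $x$ is $\mathfrak{X}$-mean-periodic in the convolution sense.

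The one point requiring genuine care --- and the place where I expect the main obstacle --- is the case $\mathfrak{X}=\mathbf{S}(\R)^\ast$ or $\mathbf{S}(\R_+^\times)^\ast$, where $\mathfrak{X}$ carries the weak $\ast$-topology rather than a Fr\'echet topology. Here one must check that the dual of $\mathfrak{X}$ entering the convolution equation is the correct object: as recalled in the excerpt, the bidual $V^{\ast\ast}$ of a space $V$ (namely $\mathbf{S}(\R)$ or $\mathbf{S}(\R_+^\times)$) taken with respect to the weak $\ast$-topology on $V^\ast$ is canonically identified with $V$ itself, so that $\mathfrak{X}^\ast=\mathbf{S}(\R)$ (resp. $\mathbf{S}(\R_+^\times)$), and the pairing used to define $\ast_+$ (resp. $\ast_\times$) on $\mathfrak{X}\times\mathfrak{X}^\ast$ is exactly the pairing $\langle f,\varphi\rangle=\varphi(f)$. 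With this identification in hand, Hahn--Banach for the weak $\ast$-topology (every proper weak-$\ast$-closed subspace is the annihilator of some nonzero element of the predual) gives precisely the required separating functional in $\mathfrak{X}^\ast$, and the argument proceeds as above. I would also remark that the representations $\tau^{+,\ast}$ and $\tau^{\times,\ast}$ are continuous for the weak $\ast$-topology --- this is immediate from their defining adjunction relations $\langle f,\tau^{\times,\ast}_x\varphi\rangle=\langle\tau^\times_x f,\varphi\rangle$ --- which is what makes $\mathcal{T}(x)$ a well-defined translation-invariant closed subspace in the first place. Once these identifications are recorded, the proposition follows in all four cases uniformly.
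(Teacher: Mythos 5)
Your proof is correct and takes exactly the approach the paper signals (Hahn--Banach separating a proper closed subspace from the whole space), which is all the paper itself gives by way of justification. One small point worth tightening: since $(x\ast x^\ast)(g)=\langle\tau_g\check x,x^\ast\rangle$, a convolutor $x^\ast$ with $x\ast x^\ast=0$ annihilates the translates of $\check x$, not of $x$ itself, so the argument actually shows $\mathcal{T}(\check x)\neq\mathfrak{X}$ (respectively one should apply Hahn--Banach to $\mathcal{T}(\check x)$ rather than $\mathcal{T}(x)$); the passage to $\mathcal{T}(x)$ then uses that the involution is a linear homeomorphism of $\mathfrak{X}$ carrying $\mathcal{T}(x)$ onto $\mathcal{T}(\check x)$, so one is proper iff the other is --- your parenthetical ``the involution is an automorphism'' is the right reason, it just deserves to be stated as an equivalence of the two $\mathcal{T}$'s rather than a ``relabeling'' of $x$.
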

The spectral synthesis holds for $X=\mathcal{C}_{\exp}^\infty({\R})$, this follows from \cite[Theorem A, Page 627]{Gi} which uses \cite {MR0105620}. 
A short self-contained text establishing the spectral synthesis for $\mathcal{C}_{\exp}^\infty({\R})$ is being written by A.~Borichev.

The previous definitions and identifications lead to the following links between the different $\mathfrak{X}$-mean-periodicities. Let $L_{\text{\emph{loc}},\exp}^1({\R})$ be the space of locally integrable functions $H(t)$ on $\R$ satisfying $H(t)=O(\exp(a\abs{t}))$ as $\abs{t}\to+\infty$ for some real number $a\geq 0$.
\begin{proposition}\label{proposition_match}
Let $H(t)\in L_{\text{\emph{loc}},\exp}^1({\R})$.
\begin{itemize} 
\item
If $H(t)\in \mathcal{C}_{\exp}^\infty({\R})$ is $\mathcal{C}_{\exp}^\infty({\R})$-mean-periodic and $F\ast_+H=0$ for some non-trivial $F \in \mathbf{S}({\R})$ then $H(t)$ is $\mathbf{S}({\R})^\ast$-mean-periodic.
\item
If $H(t)$ is $\mathbf{S}({\R})^\ast$-mean-periodic and $F\ast_+ H=0$ for some non-trivial $F\in \mathbf{S}({\R})$, which is continuous and compactly supported on $\R$, then $H(t)$ is $\mathcal{C}({\R})$-mean-periodic.
\end{itemize}
\end{proposition}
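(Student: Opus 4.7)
The plan is to transfer the given identity $F\ast_+H=0$ to the target functional setting by producing a witness of mean-periodicity in the dual of the target space directly from $F$. For (i) this witness is $F$ itself, reinterpreted via the bidual identification $\mathbf{S}({\R})^{\ast\ast}\cong\mathbf{S}({\R})$ recalled just above the statement; for (ii) it is the Radon measure $\mu_F\coloneqq F(t)\dd t$, which sits inside $\mathcal{C}({\R})^\ast=M_0({\R})$ because $F$ is continuous and compactly supported. The crucial point is that after unfolding the definitions of the involution, of the dual representation $\tau^{+,\ast}$, and of the bidual pairing, the convolutions in $\mathcal{C}_{\exp}^\infty({\R})$, in $\mathbf{S}({\R})^\ast$, and in $\mathcal{C}({\R})$ all reduce to the classical convolution $(H\ast F)(g)=\int H(g-t)F(t)\dd t$ (modulo an elementary $g\leftrightarrow-g$ substitution). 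Hence the vanishing of $F\ast_+H$ in one of these three spaces forces the vanishing of the associated convolution in the others.

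For (i), $F$ is non-trivial in $\mathbf{S}({\R})^{\ast\ast}\cong\mathbf{S}({\R})$ and $H$ lies in $\mathbf{S}({\R})^\ast$ via the continuous inclusion $\mathcal{C}_{\exp}^\infty({\R})\prec\mathbf{S}({\R})^\ast$. I would first write $(H\ast_+F)(g)=\langle\tau_g^{+,\ast}\check{H},F\rangle$ from the definition of convolution, use the bidual pairing to convert this to $\check{H}(\tau_g^+F)$, and then identify the resulting integral with $(H\ast F)(\pm g)$ by elementary substitution; the hypothesis $F\ast_+H=0$ in $\mathcal{C}_{\exp}^\infty({\R})$ then yields $H\ast_+F=0$ in $\mathbf{S}({\R})^\ast$, so $H$ is $\mathbf{S}({\R})^\ast$-mean-periodic. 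For (ii), I would write $(H\ast_+\mu_F)(g)=\int H(g-t)F(t)\dd t=(H\ast F)(g)$ directly from the definition of convolution in $\mathcal{C}({\R})$, and conclude from the hypothesis that $\mu_F$ witnesses $\mathcal{C}({\R})$-mean-periodicity of $H$.

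The main obstacle is the careful bookkeeping of conventions for the involution, the dual representation $\tau^{+,\ast}$, and the bidual pairing, needed to verify that the three convolutions really do match up. A secondary point in (ii) is the continuity of $H$, which must either be taken as implicit in the setup (since $\mathcal{C}({\R})$-mean-periodicity presupposes $H\in\mathcal{C}({\R})$) or be deduced from the smoothing effect of convolving against the compactly supported smooth witness $F$; this subtlety does not affect the main identification but should be addressed explicitly.
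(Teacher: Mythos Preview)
Your proposal is correct and matches the paper's approach: the paper does not give an explicit proof of this proposition at all, stating only that ``the previous definitions and identifications lead to the following links between the different $\mathfrak{X}$-mean-periodicities,'' so you are spelling out exactly what the paper leaves implicit. Your identification of the witnesses ($F$ itself via $\mathbf{S}(\R)^{\ast\ast}\cong\mathbf{S}(\R)$ for (i), and the compactly supported measure $\mu_F=F(t)\dd t\in M_0(\R)$ for (ii)) is the intended one, and the heart of the argument is indeed the routine but careful check that the abstract convolution $\langle\tau_g\check{H},F\rangle$ unwinds to the classical integral $\int H(g-t)F(t)\dd t$ in each setting.

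Your flag on the continuity of $H$ in part (ii) is well taken: the hypothesis $H\in L^1_{\mathrm{loc},\exp}(\R)$ does not by itself place $H$ in $\mathcal{C}(\R)$, and convolving with $F$ does not repair this for $H$ itself. The paper does not address this point, so it is either an implicit standing assumption on $H$ in the applications (where $H$ arises as $h(e^{-t})$ with $h$ continuous) or a minor gap in the statement as written; either way you are right to note it, and it does not affect the substance of the transfer argument.
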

%---------------------------------
\subsection{Mean-periodicity and analytic properties of Laplace transforms I}\label{ssec_La}%
Let $G=\R$ and $X$ be a locally convex separated topological ${\C}$-vector space consisting of functions or distributions on $G$. Developing the theory of Laplace--Carleman transforms on $X$ requires the following additional properties on $X$:
\begin{itemize}
\item
there exists an open set $\Omega\subset\C$ such that every exponential monomial $t^p e^{\lambda t}$ with $p\in{\Z}_+$ and $\lambda\in\Omega$
belongs to $X$,
\item
the two sided Laplace transform
\[
\mathsf{L}_{\pm}(\varphi)(s)\coloneqq \langle e^{-st},\varphi\rangle\left(=\int_{-\infty}^{+\infty}\varphi(t)e^{-st}\dd t\right)
\]
is a holomorphic function on $\Omega$ for all  $\varphi\in X^\ast$.
\item
if $f\ast_+\varphi=0$ for some $\varphi\in X^\ast\setminus\{0\}$ then the two sided Laplace transforms
\[
\mathsf{L}_{\pm}(f^-\ast_+\varphi)(s)=\int_{-\infty}^{+\infty}(f^-\ast_+\varphi)(t)e^{-st}\dd t
\]
and
\[
\mathsf{L}_{\pm}(f^+\ast_+\varphi)(s)=\int_{-\infty}^{+\infty}(f^+\ast_+\varphi(t))e^{-st}\dd t
\]
are  holomorphic functions on $\Omega$,
where
\begin{equation*}
f^+(x)\coloneqq \begin{cases}
f(x) & \text{if $x\geq 0$,} \\
0 & \text{otherwise}
\end{cases}
\end{equation*}
\begin{equation*}
f^-(x)\coloneqq \begin{cases}
0 & \text{if $x\geq 0$,} \\
f(x) & \text{otherwise.}
\end{cases}
\end{equation*}
\end{itemize}
Of course, $X=\mathcal{C}({\R})$ and $X=\mathcal{C}^\infty({\R})$ both satisfy the previous conditions with $\Omega=\C$ since the respective dual spaces consist of compactly supported measures and distributions.
\begin{definition}\label{def_LC}
The \emph{Laplace--Carleman transform} $\mathsf{LC}(f)(s)$ of a $X$-mean-periodic
function (distribution) $f\in X$ is defined by
\[
\mathsf{LC}(f)(s)\coloneqq \frac{\mathsf{L}_{\pm}(f^+\ast_+\varphi)(s)}{\mathsf{L}_{\pm}(\varphi)(s)}=-\frac{\mathsf{L}_{\pm}(f^-\ast_+\varphi)(s)}{\mathsf{L}_{\pm}(\varphi)(s)}.
\]
for a $\varphi\in X^\ast\setminus\{0\}$ satisfying $f\ast_+\varphi=0$.
\end{definition}
It is easy to see  that $\mathsf{LC}(f)(s)$ does not depend on the particular choice of $\varphi\in X^\ast\setminus\{0\}$ satisfying $f\ast_+\varphi=0$. Its main analytic properties are described in the following proposition (see essentially \cite[Section 9.3]{Me}).
\begin{proposition}\label{prop_ana_LaCa}
Let $X$ be a locally convex separated topological $\C$-vector space consisting of functions or distributions on $\R$ satisfying the conditions above.
\begin{itemize}
\item
If $f$ is a $X$-mean-periodic function, then its Laplace--Carleman transform $\LaCa(f)(s)$ is a meromorphic function on $\Omega$.
\item
If $f$ is a $X$-mean-periodic function whose Laplace transform $\mathsf{L}(f)(s)$ exists for some right-half plane $\Re(s)>\sigma_0$ contained in $\Omega$ then $\mathsf{LC}(f)(s)=\mathsf{L}(f)(s)$ in that region and the Laplace--Carleman transform $\mathsf{LC}(f)(s)$ is the meromorphic continuation of $\mathsf{L}(f)(s)$ to $\Omega$.
\end{itemize}
\end{proposition}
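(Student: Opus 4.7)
The plan is to prove the two assertions in order, based on the multiplicativity $\DLa(a\ast_+ b)=\DLa(a)\,\DLa(b)$ together with the associativity and commutativity of the convolution $\ast_+$. The first task is to verify that $\LaCa(f)(s)$ is well-defined, i.e.\ independent of the annihilator $\varphi$. If $\varphi_1,\varphi_2\in X^\ast\setminus\{0\}$ both satisfy $f\ast_+\varphi_i=0$, then $(f^+\ast_+\varphi_1)\ast_+\varphi_2=(f^+\ast_+\varphi_2)\ast_+\varphi_1$, and applying $\DLa$ together with multiplicativity yields $\DLa(f^+\ast_+\varphi_1)\,\DLa(\varphi_2)=\DLa(f^+\ast_+\varphi_2)\,\DLa(\varphi_1)$, giving equality of the two quotients. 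The coincidence of the $f^+$- and $f^-$-versions in Definition~\ref{def_LC} is immediate from $f^++f^-=f$, $f\ast_+\varphi=0$, and linearity of $\DLa$.

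For assertion (1), the three bulleted hypotheses on $X$ ensure that both $\DLa(\varphi)$ and $\DLa(f^+\ast_+\varphi)$ are holomorphic on $\Omega$. The well-definedness above allows me to choose $\varphi$ so that $\DLa(\varphi)\not\equiv 0$ on $\Omega$: since every exponential monomial $t^p e^{\lambda t}$, $\lambda\in\Omega$, lies in $X$, one has $\DLa(\varphi)(\lambda)=\langle e^{-\lambda t},\varphi\rangle$, and if this pairing vanished for all $\lambda\in\Omega$ and every $\tau^+$-translate of $\varphi$ then $\varphi$ would annihilate $\mathcal{T}(e^{-\lambda t})$, contradicting non-triviality. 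The quotient of two holomorphic functions on $\Omega$ with non-identically-vanishing denominator is meromorphic on $\Omega$, which proves (1).

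For assertion (2), if $\La(f)(s)=\int_0^{+\infty}f(t)e^{-st}\dd t$ converges absolutely on some right half-plane $\Re(s)>\sigma_0$ contained in $\Omega$, then $\DLa(f^+)(s)=\La(f)(s)$ there, and the convolution-product formula applied on this half-plane gives $\DLa(f^+\ast_+\varphi)(s)=\La(f)(s)\,\DLa(\varphi)(s)$. Dividing by $\DLa(\varphi)(s)$ yields $\LaCa(f)(s)=\La(f)(s)$ on this region, and combined with (1) this exhibits $\LaCa(f)$ as the meromorphic continuation of $\La(f)$ to $\Omega$. The main technical obstacle is the rigorous justification of the exchange formula $\DLa(a\ast_+ b)=\DLa(a)\,\DLa(b)$ when one of the factors is only a distribution in $X^\ast$: absolute convergence of the iterated distributional pairings on a common open subset of $\Omega$ must be checked so that a Fubini-type interchange applies, which relies on the decay encoded in $X^\ast$ and on the support structure used to split $f=f^++f^-$.
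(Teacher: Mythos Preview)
The paper does not actually prove this proposition: it is stated with the parenthetical ``(see essentially \cite[Section 9.3]{Me})'' and no proof is given. The well-definedness of $\LaCa(f)$ is dismissed with ``It is easy to see\ldots'', and the one computation in the paper that touches the content of this proposition is the opening of the proof of Proposition~\ref{lemma_mpL}, where the identity $\La_\pm(f^+\ast_+\varphi)(s)=\La(f)(s)\,\La_\pm(\varphi)(s)$ on $\Re(s)>\sigma_0$ is derived by the Fubini-type interchange you describe. So your approach to part (2) is exactly what the paper does elsewhere, and your argument for well-definedness via associativity/commutativity of $\ast_+$ is the standard one.

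There is one genuine gap in your treatment of part (1). You need $\DLa(\varphi)\not\equiv 0$ on $\Omega$ for the quotient to be meromorphic, and your argument for this is not valid as written. You observe that $\DLa(\varphi)(\lambda)=\langle e^{-\lambda t},\varphi\rangle$ and claim that if this vanished for all $\lambda\in\Omega$ (and all translates of $\varphi$) then $\varphi$ would annihilate $\mathcal{T}(e^{-\lambda t})$, ``contradicting non-triviality''. But $e^{-\lambda t}$ is an eigenfunction of translation, so $\mathcal{T}(e^{-\lambda t})$ is one-dimensional, and annihilating it is exactly the hypothesis $\langle e^{-\lambda t},\varphi\rangle=0$; no contradiction with $\varphi\neq 0$ arises unless you know the exponential polynomials span a dense subspace of $X$. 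That density is \emph{not} among the stated hypotheses. In the concrete spaces the paper actually uses ($X^\ast$ consisting of compactly supported measures or distributions, or $X^\ast=\mathbf{S}(\R)$), the non-vanishing of $\DLa(\varphi)$ for $\varphi\neq 0$ follows from injectivity of the Fourier--Laplace transform on those spaces, which is a separate analytic input you should invoke explicitly rather than try to extract from the abstract bullets.
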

Proving some functional equations may sometimes be done thanks to the following proposition.
\begin{proposition}
\label{lemma_mpL}
Let $\mathfrak{X}_+$ be $\mathcal{C}({\R})$ or $\mathcal{C}^\infty({\R})$ or $\mathcal{C}_{\rm exp}^\infty({\R})$ or $\mathbf{S}(\R)^*$. Let $f_1(t)$ and $f_2(t)$ be two $\mathfrak{X}_+$-mean-periodic functions whose Laplace transforms are defined on $\Re(s)>\sigma_0$ for some $\sigma_0$. If
\begin{equation*}
f_1(-t)=\epsilon f_2(t)
\end{equation*}
for some complex number $\epsilon$ of absolute value one, then the Laplace--Carleman transforms $\LaCa(f_1)(s)$ and $\LaCa(f_2)(s)$ of $f_1(t)$ and $f_2(t)$ satisfy the functional equation
\begin{equation*}
\LaCa(f_1)(-s)=-\epsilon\LaCa(f_2)(s).
\end{equation*}
\end{proposition}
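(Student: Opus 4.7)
The plan is to convert $\LaCa$ back into the ordinary two-sided Laplace transform $\DLa$ on a suitable right half-plane, use the substitution $t \mapsto -t$ to translate the symmetry $f_1(-t) = \epsilon f_2(t)$ into a relation between $\LaCa(f_1)(-s)$ and $\LaCa(f_2)(s)$, and then extend the resulting identity to all of $\Omega$ by the identity principle.

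The preparatory step is the following identity: if $f\in\mathfrak{X}_+$ is mean-periodic with annihilator $\varphi\in\mathfrak{X}_+^{\ast}\setminus\{0\}$, then $\LaCa(f)(s) = -\DLa(f^-)(s)$ on the region where $\DLa(f^-)(s)$ converges absolutely. This follows from Definition~\ref{def_LC}, namely $\LaCa(f)(s) = -\DLa(f^-\ast_+\varphi)(s)/\DLa(\varphi)(s)$, together with the convolution factorisation $\DLa(f^-\ast_+\varphi)(s) = \DLa(f^-)(s)\,\DLa(\varphi)(s)$ obtained by Fubini on the intersection of the strips of absolute convergence of $\DLa(f^-)$ and $\DLa(\varphi)$. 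The companion identity $\LaCa(f)(s) = \La(f)(s)$ on $\Re(s)>\sigma_0$ is the second bullet of Proposition~\ref{prop_ana_LaCa}.

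With this in hand, for $\Re(s)>\sigma_0$ I unfold the Laplace transform of $f_2$, apply the hypothesis and the change of variable $u=-t$:
\begin{equation*}
\LaCa(f_2)(s)=\int_0^{+\infty}f_2(t)e^{-st}\dd t=\epsilon^{-1}\int_0^{+\infty}f_1(-t)e^{-st}\dd t=\epsilon^{-1}\int_{-\infty}^0 f_1(u)e^{su}\dd u=\epsilon^{-1}\DLa(f_1^-)(-s).
\end{equation*}
The rightmost integral converges absolutely as soon as $\Re(s)$ exceeds the exponential growth rate of $f_1$ at $-\infty$, which is finite because $f_1\in\mathfrak{X}_+$. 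Applying the preparatory identity to $f_1$ at the point $-s$ gives $\DLa(f_1^-)(-s)=-\LaCa(f_1)(-s)$ on the same half-plane, hence
\begin{equation*}
\LaCa(f_1)(-s)=-\epsilon\,\LaCa(f_2)(s)
\end{equation*}
on a non-empty right half-plane. Since both sides are meromorphic on $\Omega$ by Proposition~\ref{prop_ana_LaCa}, the identity propagates to all of $\Omega$ by analytic continuation.

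The delicate point is ensuring that $\varphi$ can be chosen so that $\DLa(\varphi)$ converges absolutely on a strip meeting the domain of $\DLa(f^-)$, so that Fubini applies. For $\mathcal{C}(\R)$ and $\mathcal{C}^\infty(\R)$, the dual consists of compactly supported measures or distributions, so $\DLa(\varphi)$ is entire; for $\mathcal{C}_{\mathrm{exp}}^\infty(\R)$, the over-exponential decay of elements of the dual again makes $\DLa(\varphi)$ entire; for $\mathbf{S}(\R)^{\ast}$, the convolution, the change of variable, and the Laplace transform must be interpreted in the distributional sense against the exponential $e^{-st}$, but the argument is unchanged.
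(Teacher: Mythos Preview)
Your proof is correct and follows essentially the same approach as the paper's: both arguments factor the two-sided Laplace transform of the convolution via Fubini, use the substitution $t\mapsto -t$ to convert $f_1(-t)=\epsilon f_2(t)$ into the desired relation, and then invoke analytic continuation. The only cosmetic difference is that the paper unfolds the $f^+$-side of $\LaCa(f_1)$ and the $f^-$-side of $\LaCa(f_2)$, whereas you cite Proposition~\ref{prop_ana_LaCa} for $\LaCa(f_2)=\La(f_2)$ and unfold the $f^-$-side of $\LaCa(f_1)$ at $-s$; the underlying computation is identical.
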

\begin{proof}[\proofname{} of Proposition~\ref{lemma_mpL}]%
Let $\varphi_1, \varphi_2\neq 0$ be two elements of $\mathfrak{X}_+^\ast$ satisfying $f_1\ast_+\varphi_1=0$ and $f_2\ast_+\varphi_2=0$. If $\Re{(s)}>\sigma_0$ then
\begin{eqnarray*}
\La_\pm(f_1^+\ast_+\varphi_1)(s) & = & \int_{-\infty}^{+\infty}\left(\int_{-\infty}^{+\infty} f_1^+(t-x)e^{-st}\dd t\right)\dd\varphi_1(x), \\
& = &  \int_{-\infty}^{+\infty}\left(\int_{-\infty}^{+\infty}f_1^+(t)e^{-st}\dd t\right)e^{-sx}\dd\varphi_1(x), \\
& = & \La(f_1)(s)\;\;\La_\pm(\varphi_1)(s)
\end{eqnarray*}
and so $\LaCa(f_1)(s)=\La(f_1)(s)$. On the other hand, if $\Re{(s)}<-\sigma_0$ then
\begin{eqnarray*}
-\La_\pm(f_2^-\ast_+\varphi_2)(s) & = & -\int_{-\infty}^{+\infty}\left(\int_{-\infty}^{+\infty} f_2^-(t-x)e^{-st}\dd t\right)\dd\varphi_2(x), \\
& = &  -\int_{-\infty}^{+\infty}\left(\int_{-\infty}^{+\infty}f_2^-(t)e^{-st}\dd t\right)e^{-sx}\dd\varphi_2(x), \\
& = & -\int_{-\infty}^{0}f_2(t)e^{-st}\dd t\;\;\La_\pm(\varphi_2)(s) \\
& = & -\epsilon^{-1}\La(f_1)(-s)\;\;\La_\pm(\varphi_2)(s)
\end{eqnarray*}
the last line being a consequence of the functional equation satisfied by $f_1$ and $f_2$. 
Thus, if $\Re{(s)}<-\sigma_0$ then $\LaCa(f_2)(s)=-\epsilon^{-1}\La(f_1)(-s)$ or if $\Re{(s)}>\sigma_0$ then $\LaCa(f_2)(-s)=-\epsilon^{-1}\La(f_1)(s)$. We have just proved that
\begin{equation*}
\LaCa(f_1)(s)=-\epsilon\LaCa(f_2)(-s)
\end{equation*}
if $\Re{(s)}>\sigma_0$. Such equality remains valid for all  complex numbers $s$ by analytic continuation.
\end{proof}
\begin{remark}\label{rem_mpi}%
In the previous proof, the formal equality \eqref{107} is implicitly used. For instance, if $f_1(t)=f_2(t)$ and $\epsilon=+1$ then the $\mathcal{C}(\R)$-mean-periodicity of $f_1(t)$ can be formally written as
\[
0=\int_{-\infty}^{+\infty}\int_{-\infty}^{+\infty}f_1(t-x)e^{-st}\dd\varphi_1(t)=\left( \int_{-\infty}^{+\infty}f_1(t)e^{-st}\dd t\right)\times\left(\int_{-\infty}^{+\infty}e^{-sx}\dd\varphi_1(x)\right).
\]
Such formal equality can be compared with the formal Euler equality $\sum_{n\in\Z}z^n=0$ $(z\neq 1)$, which Euler used to calculate values of $\zeta(s)$ at negative integers. This equality was also used in the proof of rationality of zeta functions of curves over finite fields (\cite{MR1846477}). For modern interpretations of the Euler equality, see \cite{MR1861978} and also \cite[Section 8]{Fe1}.
\end{remark}
%%!
%....................................................................................
\subsection{Mean-periodicity and analytic properties of Laplace transforms II}\label{sse_Me}%
%%!
The multiplicative setting is sometimes more convenient in analytic number theory
than the additive one. In particular, it is used in the study of boundary terms of two-dimensional zeta integrals in which case $\R_+^\times=\abs{J_S}$ (see \cite[Section 35]{Fe2}). 
This is the reason why we define the multiplicative analogue of the Laplace--Carleman transform. 
Of course, all the arguments provided below hold in the additive setting via the change of variable $t\mapsto x=\exp(-t)$. 
Let $L_{\textrm{loc},{\rm poly}}^1({\R_+^\times})$ be the space of locally integrable functions on ${\R_+^\times}$ satisfying
\begin{equation*}
h(x)=\begin{cases}
O(x^{a}) & \text{as $x\to+\infty$}, \\
O(x^{-a}) & \text{as $x\to0^+$}
\end{cases}
\end{equation*}
for some real number $a\geq 0$. Each $h\in L_{\textrm{loc},{\rm poly}}^1({\R_+^\times})$ gives rise to a distribution $\varphi_h \in \mathbf{S}({\R_+^\times})^\ast$ defined by
\begin{equation*}
\forall f \in \mathbf{S}({\R}_+^\times),\quad\langle f , \varphi_h \rangle=\int_{0}^{+\infty}f(x)h(x)\frac{\dd x}{x}.
\end{equation*}
If there is no confusion, we denote $\varphi_h$ by $h$ itself and use the notations $\langle f,h\rangle=\langle f,\varphi_h\rangle$ and $h(x)\in \mathbf{S}({\R}_+^\times)^\ast$. 
Then 
\begin{equation*}
x^\lambda\log^k{(x)}\in{\mathcal{C}}_{\rm poly}^\infty({\R_+^\times})
\subset L_{\textrm{loc},{\rm poly}}^1({\R_+^\times})\subset \mathbf{S}({\R}_+^\times)^\ast
\end{equation*}
for all $k\in{\Z}_{+}$ and $\lambda\in{\C}$. Moreover, if $h\in L_{\textrm{loc},{\rm poly}}^1({\R_+^\times})$ then the multiplicative convolution $f\ast_\times\varphi_h$ coincides with the ordinary multiplicative convolution on functions on ${\R}_+^\times$ namely
\[
(f\ast_\times h)(x)=\langle\tau^\times_x\check{f},f\rangle=\int_{0}^{+\infty}f(x/y)h(y)\frac{\dd y}{y}=\int_{0}^{+\infty}f(y)h(x/y)\frac{\dd y}{y}.
\]
For a $h \in L_{\textrm{loc},{\rm poly}}^1({\R_+^\times})$ define $h^+$ and $h^-$ by
\begin{equation*}
h^+(x)\coloneqq \begin{cases}
0 & \text{if $x\geq 1$}, \\
h(x) & \text{otherwise}
\end{cases}
\quad 
h^-(x)\coloneqq \begin{cases}
h(x) & \text{if $x\geq 1$}, \\
0 & \text{otherwise}.
\end{cases}
\end{equation*}
Clearly, $h^{\pm}\in L_{\textrm{loc},{\rm poly}}^1({\R_+^\times})$ for all  $h\in L_{\textrm{loc},{\rm poly}}^1({\R_+^\times})$.
\begin{lemma}\label{lem_01}%
Let $h \in L_{\text{\emph{loc}},{\rm poly}}^1({\R_+^\times})$. If $f\ast_\times h=0$ for some non-trivial $f\in\mathbf{S}({\R_+^\times})$ then the Mellin transforms
\begin{equation*}
\mathsf{M}(f\ast_\times h^{\pm})(s)=\int_{0}^{+\infty}(f\ast_\times h^{\pm})(x)x^s\frac{\dd x}{x}
\end{equation*}
are entire functions on ${\C}$.
\end{lemma}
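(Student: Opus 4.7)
The plan is to split $h = h^+ + h^-$ and use the hypothesis $f \ast_\times h = 0$ to obtain $g := f \ast_\times h^+ = -(f \ast_\times h^-)$ as pointwise functions on $\R_+^\times$. The key observation is that this common function $g$ inherits the best asymptotic decay at $0^+$ from one representation and at $+\infty$ from the other, which will make its Mellin transform absolutely convergent on all of $\C$.

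Concretely, since $h^+$ is supported in $(0,1]$ with $|h^+(y)| \leq C y^{-a}$ and $f \in \mathbf{S}(\R_+^\times)$ decays faster than any power of $x$ at both ends, I would estimate
$(f \ast_\times h^+)(x) = \int_0^1 f(x/y)\, h(y)\, \tfrac{\dd y}{y}$
to obtain rapid decay as $x \to +\infty$: using $|f(x/y)| \ll_N (x/y)^{-N}$ for any $N > a$ produces an integrable factor $y^{N-a-1}$ in $y$, giving $(f \ast_\times h^+)(x) = O(x^{-N})$ for every such $N$. For the behavior at $0^+$, the substitution $u = x/y$ turns the integral into $\int_x^{+\infty} f(u)\, h(x/u)\, \tfrac{\dd u}{u}$, and since $x/u \leq 1$ one estimates $|h(x/u)| \leq C(x/u)^{-a}$ to get $(f \ast_\times h^+)(x) = O(x^{-a})$. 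A symmetric argument shows that $(f \ast_\times h^-)(x)$ is $O(x^a)$ as $x \to +\infty$ and decays faster than any polynomial as $x \to 0^+$. The identity $f \ast_\times h^+ = -(f \ast_\times h^-)$ then forces $g$ to have rapid decay at both ends.

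Consequently $\mathsf{M}(g)(s) = \int_0^\infty g(x)\, x^s\, \tfrac{\dd x}{x}$ converges absolutely for every $s \in \C$, so $\mathsf{M}(f \ast_\times h^+)(s) = -\mathsf{M}(f \ast_\times h^-)(s) = \mathsf{M}(g)(s)$ is entire. The main obstacle, though essentially routine once set up, is upgrading the local polynomial bounds on $h$ to genuine rapid-decay estimates on $f \ast_\times h^\pm$: this is where one needs the full force of the strong Schwartz class $\mathbf{S}(\R_+^\times)$, whose elements decay faster than any power of $x$ at both $0$ and $+\infty$, rather than the weaker logarithmic decay afforded by $\Schwartz(\R_+^\times)$ alone.
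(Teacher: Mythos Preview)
Your proof is correct and follows essentially the same strategy as the paper's: show that $f\ast_\times h^+$ has rapid decay at $+\infty$ while $f\ast_\times h^-$ has rapid decay at $0^+$, then use the identity $f\ast_\times h^+ = -f\ast_\times h^-$ (coming from $f\ast_\times h=0$) to conclude that the common function has rapid decay at both ends, whence its Mellin transform is entire. The paper's proof is a three-line sketch asserting exactly these facts; you have simply filled in the explicit estimates (one minor caveat: since $h$ is only locally integrable, the pointwise bound $|h^+(y)|\le C y^{-a}$ holds near $0$ but on a compact piece like $[\delta,1)$ you should instead use local integrability of $h$ together with the uniform rapid decay of $f(x/y)$ for $y$ in that range---this changes nothing in the conclusion).
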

\begin{proof}[\proofname{} of Lemma~\ref{lem_01}]%
On one hand, $f\ast_\times h^-$ is of rapid decay as $x\to0^+$ and on the other hand, $f \ast h^+$ is of rapid decay as $x\to+\infty$. Both are of rapid decay as $x\to0^+$ and as $x\to+\infty$ since $f\ast_\times h^-=-f\ast_\times h^+$. Hence the Mellin transforms of $f \ast h^{\pm}$ are defined on the whole complex plane and entire.
\end{proof}
\begin{definition}\label{def_MC}
Let $h\in L_{\text{\emph{loc}},{\rm poly}}^1({\R_+^\times})$. If $f\ast_\times h=0$ for some non-trivial $f\in\mathbf{S}({\R_+^\times})$ then the \emph{Mellin--Carleman transform} $\mathsf{MC}(h)(s)$ of $h(x)$ is defined by
\begin{equation*}
\mathsf{MC}(h)(s)\coloneqq \frac{\mathsf{M}(f\ast_\times h^+)(s)}{\mathsf{M}(f)(s)}=-\frac{\mathsf{M}(f\ast_\times h^-)(s)}{\mathsf{M}(f)(s)}.
\end{equation*}
\end{definition}
The change of variable $x \mapsto t=-\log{x}$ entails that the Mellin--Carleman transform coincides with the Laplace--Carleman transform namely
\begin{equation*}
\mathsf{MC}(h(x))=\mathsf{LC}(h(e^{-t})).
\end{equation*}
As a consequence, $\mathsf{MC}(h)$ does not depend on the particular choice of non-trivial $f$ satisfying $f\ast_\times h=0$.
\begin{proposition}\label{proposition_MC}
Let $h$ be an element of $L_{\text{\emph{loc}},{\rm poly}}^1({\R_+^\times})$
\begin{itemize}
\item
If $h$ is $\mathbf{S}({\R_+^\times})^\ast$-mean-periodic then the Mellin--Carleman transform $\mathsf{MC}(h)(s)$  of $h$ is a meromorphic function on $\C$.
\item
If $h \in \mathcal{C}_{\rm poly}^\infty({\R_+^\times})$ is $\mathcal{C}_{\rm poly}^\infty({\R_+^\times})$-mean-periodic and $f \ast_\times h=0$ for some non-trivial $f \in\mathbf{S}({\R_+^\times}) \subset \mathcal{C}_{\rm poly}^\infty({\R_+^\times})^\ast$ then $\mathsf{MC}(h)(s)$ is a meromorphic function on $\C$.
\end{itemize}
\end{proposition}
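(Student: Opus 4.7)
The plan is to prove that in both cases the Mellin--Carleman transform is a ratio of two entire functions, hence meromorphic on $\C$. The key inputs will be Lemma~\ref{lem_01} (giving entirety of the numerator) together with the rapid decay of elements of $\mathbf{S}(\R_+^\times)$ (giving entirety of the denominator).

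First I would unpack the mean-periodicity hypothesis in both cases to produce a non-trivial $f\in \mathbf{S}(\R_+^\times)$ with $f\ast_\times h=0$. In part (ii) this is given directly. In part (i), since $h\in \mathbf{S}(\R_+^\times)^{\ast}$ is $\mathbf{S}(\R_+^\times)^{\ast}$-mean-periodic, there exists a non-trivial element $\varphi$ of $(\mathbf{S}(\R_+^\times)^{\ast})^{\ast}$ annihilating $h$, and the bidual identification recalled just before the definition of mean-periodicity lets us regard $\varphi$ as a non-trivial $f\in \mathbf{S}(\R_+^\times)$ with $f\ast_\times h=0$. In either case, the hypothesis $h\in L^{1}_{\textrm{loc},\mathrm{poly}}(\R_+^\times)$ holds (directly in part (i), and because $\mathcal{C}_{\mathrm{poly}}^\infty(\R_+^\times)\subset L^{1}_{\textrm{loc},\mathrm{poly}}(\R_+^\times)$ in part (ii)), so Lemma~\ref{lem_01} applies and asserts that
\begin{equation*}
\mathsf{M}(f\ast_\times h^{+})(s)=\int_0^{+\infty}(f\ast_\times h^{+})(x)\,x^s\,\frac{\dd x}{x}
\end{equation*}
is an entire function of $s$.

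Next I would handle the denominator. Since $f\in \mathbf{S}(\R_+^\times)$, the defining seminorms \eqref{901} give, for any $m\in\Z$, a bound $|f(x)|\ll_m \min(x^{-m},x^{m})$, so the integral
\begin{equation*}
\mathsf{M}(f)(s)=\int_0^{+\infty} f(x)\,x^s\,\frac{\dd x}{x}
\end{equation*}
converges absolutely and uniformly on every vertical strip, hence defines an entire function of $s$. Moreover $\mathsf{M}(f)$ is not identically zero, because the Mellin transform is injective on $\mathbf{S}(\R_+^\times)$ (via the homeomorphism $\mathcal{H}_{\mathbf{S}}$ and injectivity of the Fourier/Laplace transform on the corresponding additive strong Schwartz space) and $f$ is non-trivial.

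Combining the two steps, $\mathsf{MC}(h)(s)=\mathsf{M}(f\ast_\times h^{+})(s)/\mathsf{M}(f)(s)$ is a quotient of two entire functions, and thus meromorphic on $\C$. The well-definedness of $\mathsf{MC}(h)$ (independence from the choice of $f$) was already recorded after Definition~\ref{def_MC} via the identification with the Laplace--Carleman transform. The only conceptual obstacle is the passage from mean-periodicity to an annihilator lying in $\mathbf{S}(\R_+^\times)$ rather than merely in some larger dual space; this is exactly what the bidual identification provides in case (i), and what is hypothesised in case (ii). Everything else is routine given Lemma~\ref{lem_01}.
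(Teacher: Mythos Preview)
Your proposal is correct and follows essentially the same approach as the paper: the paper's proof simply invokes Lemma~\ref{lem_01} for the numerator and the fact that $\mathsf{M}(f)(s)$ is entire for $f\in\mathbf{S}(\R_+^\times)$ for the denominator. You add some helpful detail (the bidual identification to extract $f\in\mathbf{S}(\R_+^\times)$ in case~(i), non-vanishing of $\mathsf{M}(f)$, independence of the choice of $f$), but the underlying argument is identical.
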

\begin{proof}[\proofname{} of Proposition~\ref{proposition_MC}]%
It follows immediately from the fact that the Mellin transform $\mathsf{M}(f)(s)$ is an entire function since $f\in \mathbf{S}({\R}_+^\times)$ and from Lemma \ref{lem_01}.
\end{proof}
Let us focus on the fact that the Mellin--Carleman transform $\mathsf{MC}(h)(s)$ of $h(x)$ is \emph{not} a generalization of the Mellin transform of $h$ 
but is a generalization of the following integral, half Mellin transform, 
\[
\int_{0}^{1}h(x)x^{s}\frac{\dd x}{x}.
\]
according to the following proposition.
\begin{proposition}\label{prop_MC2}
Let $h \in L_{\text{\emph{loc}},{\rm poly}}^1({\R_+^\times})$. If $f\ast_\times h=0$ for some non-trivial $f\in \mathbf{S}({\R_+^\times})$ and if the integral $\int_{0}^{1}h(x)x^{s}\dd x/x$ converges absolutely for $\Re(s)>\sigma_0$ for some real number $\sigma_0$ then
\[
\mathsf{MC}(h)(s)=\int_{0}^{1}h(x)x^{s}\frac{\dd x}{x}
\]
on $\Re{(s)}>\sigma_0$.
\end{proposition}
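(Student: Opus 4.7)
The plan is to unwind the definition of $\mathsf{MC}(h)(s)$ and show that on $\Re(s)>\sigma_0$ the numerator $\mathsf{M}(f\ast_\times h^+)(s)$ factors as $\mathsf{M}(f)(s)\cdot\int_0^1 h(x)x^s\,dx/x$, which is the classical Mellin-convolution identity applied to $f$ and $h^+$. Dividing by $\mathsf{M}(f)(s)$ then gives the stated equality, after a small argument to handle the zeros of $\mathsf{M}(f)$.

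More concretely, recall that by Definition~\ref{def_MC},
\[
\mathsf{MC}(h)(s)=\frac{\mathsf{M}(f\ast_\times h^+)(s)}{\mathsf{M}(f)(s)},
\]
and that $h^+$ is supported in $(0,1)$ with $h^+=h$ there. Writing out the convolution,
\[
\mathsf{M}(f\ast_\times h^+)(s)=\int_0^{+\infty}\int_0^{1} f(x/y)\,h(y)\,\frac{dy}{y}\,x^s\frac{dx}{x}.
\]
I would first verify absolute convergence by Tonelli: after the substitution $u=x/y$ in the inner $x$-integral for fixed $y$, the double integral of $|f(x/y)||h(y)|$ against $x^{\Re(s)-1}$ equals
\[
\Bigl(\int_0^{+\infty}|f(u)|\,u^{\Re(s)-1}\,du\Bigr)\Bigl(\int_0^{1}|h(y)|\,y^{\Re(s)-1}\,dy\Bigr).
\]
The first factor is finite because $f\in\mathbf{S}(\R_+^\times)$, so its Mellin transform is entire; the second factor is finite for $\Re(s)>\sigma_0$ by hypothesis. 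With absolute convergence in hand, Fubini gives
\[
\mathsf{M}(f\ast_\times h^+)(s)=\mathsf{M}(f)(s)\cdot\int_0^1 h(x)\,x^s\,\frac{dx}{x}
\]
on $\Re(s)>\sigma_0$.

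Finally, divide by $\mathsf{M}(f)(s)$. A priori $\mathsf{M}(f)(s)$ is entire and may have zeros in the half-plane $\Re(s)>\sigma_0$, in which case the identity above forces $\mathsf{M}(f\ast_\times h^+)(s)$ to vanish there as well, so the quotient (interpreted as a meromorphic function) has only removable singularities on $\Re(s)>\sigma_0$ and extends to the holomorphic function $\int_0^1 h(x)x^s\,dx/x$. This gives the proposition. The main obstacle is purely the Fubini justification, and that is settled once one observes the clean decoupling obtained by the change of variable $u=x/y$; after that, the proof is essentially a formal manipulation using that $\mathbf{S}(\R_+^\times)$-functions are rapidly decreasing in both directions, which is exactly what makes their Mellin transform entire.
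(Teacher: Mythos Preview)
Your proof is correct. The paper does not actually spell out a proof of this proposition, but your argument is the natural one and is exactly the multiplicative analogue of the computation the paper does carry out in the proof of Proposition~\ref{lemma_mpL}: there, in the additive setting, the identity $\La_\pm(f_1^+\ast_+\varphi_1)(s)=\La(f_1)(s)\,\La_\pm(\varphi_1)(s)$ for $\Re(s)>\sigma_0$ is obtained by the same change of variable and Fubini step, and then one divides by $\La_\pm(\varphi_1)(s)$ to conclude $\LaCa(f_1)(s)=\La(f_1)(s)$.
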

The following proposition is the analogue of Proposition \ref{lemma_mpL} and its proof is omitted.
\begin{proposition}
\label{lemma_mpC}
Let $\mathfrak{X}_\times$ be $\mathcal{C}(\R_+^\times)$ or $\mathcal{C}^\infty(\R_+^\times)$ or $\mathcal{C}_{\rm poly}^\infty({\R_+^\times})$ or $\mathbf{S}(\R_+^\times)^*$. Let $f_1(x)$ and $f_2(x)$ be two $\mathfrak{X}_\times$-mean-periodic functions whose transforms
\begin{equation*}
\int_{0}^1f_i(x)x^s\frac{\dd x}{x}
\end{equation*}
are defined on $\Re(s)>\sigma_0$ for some $\sigma_0$. If
\begin{equation*}
f_1\left(x^{-1}\right)=\epsilon f_2(x)
\end{equation*}
for some complex number $\epsilon$ of absolute value one then the Mellin--Carleman transforms $\MeCa(f_1)(s)$ and $\MeCa(f_2)(s)$ of $f_1(x)$ and $f_2(x)$ satisfy the functional equation
\begin{equation*}
\MeCa(f_1)(-s)=-\epsilon\MeCa(f_2)(s).
\end{equation*}
\end{proposition}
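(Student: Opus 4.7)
The plan is to deduce Proposition~\ref{lemma_mpC} directly from its additive counterpart Proposition~\ref{lemma_mpL} via the exponential change of variable $x \mapsto t = -\log x$, exploiting the identity $\MeCa(h(x)) = \LaCa(h(e^{-t}))$ recorded just after Definition~\ref{def_MC}.

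Concretely, set $F_i(t) \coloneqq f_i(e^{-t})$ for $i=1,2$. Each of the four candidate spaces $\mathfrak{X}_\times$ is homeomorphic, via one of the maps $\mathcal{H}_\mathcal{C}$, $\mathcal{H}_{\mathcal{C}^\infty}$, $\mathcal{H}_{\mathcal{C}^\infty_\ast}$, or the transpose of $\mathcal{H}_\mathbf{S}$, to one of the four additive spaces $\mathfrak{X}_+$ appearing in Proposition~\ref{lemma_mpL}. Under this homeomorphism, the multiplicative representation $\tau^\times$ intertwines with the additive representation $\tau^+$, and the multiplicative convolution $\ast_\times$ corresponds to $\ast_+$; consequently, $\mathfrak{X}_\times$-mean-periodicity of $f_i$ translates into $\mathfrak{X}_+$-mean-periodicity of $F_i$. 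The hypothesis that $\int_0^1 f_i(x) x^s \dd x/x$ converges for $\Re(s) > \sigma_0$ is exactly the statement that $\int_0^{+\infty} F_i(t) e^{-st} \dd t$ converges on that half-plane, so the Laplace transforms of the $F_i$ are defined where required.

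Next, the functional relation $f_1(x^{-1}) = \epsilon f_2(x)$ becomes $F_1(-t) = \epsilon F_2(t)$, which is precisely the hypothesis of Proposition~\ref{lemma_mpL}. That proposition yields $\LaCa(F_1)(-s) = -\epsilon\,\LaCa(F_2)(s)$, and rewriting via $\MeCa(f_i)(s) = \LaCa(F_i)(s)$ gives the desired $\MeCa(f_1)(-s) = -\epsilon\,\MeCa(f_2)(s)$.

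The only technical step requiring some care is verifying that the topological homeomorphisms dualize correctly: for the distribution space $\mathbf{S}(\R_+^\times)^*$, one must check that pulling back an annihilator of $F_i$ under the transpose of $\mathcal{H}_\mathbf{S}$ produces a nontrivial element of $\mathbf{S}(\R_+^\times)^*$ annihilating $f_i$ for $\ast_\times$. This reduces to the identities defining $\tau^{+,\ast}$ and $\tau^{\times,\ast}$ together with the fact that $\mathcal{H}_\mathbf{S}$ intertwines the two representations. Apart from this bookkeeping, no new analytic input is required beyond Proposition~\ref{lemma_mpL}, and an alternative, fully direct proof — mimicking the computation in Proposition~\ref{lemma_mpL} by splitting $\MeCa$ into a right half-plane integral $\int_0^1 f_1(x) x^{-s} \dd x/x$ and a left half-plane integral $-\int_1^{+\infty} f_2(x) x^s \dd x/x$, related by the substitution $y = x^{-1}$ and $f_1(x^{-1})=\epsilon f_2(x)$, then continued analytically — works equally well.
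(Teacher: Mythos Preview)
Your proposal is correct and matches the paper's intent: the paper actually omits the proof entirely, stating only that Proposition~\ref{lemma_mpC} is the analogue of Proposition~\ref{lemma_mpL}. Your reduction via $t=-\log x$ and the identity $\MeCa(f_i)(s)=\LaCa(F_i)(s)$, together with the alternative direct computation you sketch, are precisely the two natural ways to fill in that omission.
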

Finally, let us mention the following result, which is the analogue of \cite[Theorem Page 23]{Kah}.
\begin{proposition}
Let $h \in L_{\text{\emph{loc}},{\rm poly}}^1({\R_+^\times})$ and let $P(t)$ be a polynomial of degree $n$ with complex coefficients. Let us assume that $h$ is $\mathbf{S}({\R_+^\times})^*$-mean-periodic. The exponential polynomial $x^\lambda P(\log{x})$ belongs to $\mathcal{T}(h)$ if and only if $\lambda$ is a zero of order at least $n$ of the Mellin transform $\mathsf{M}(f)(s)$ of $f$, where $f$ runs through all elements of $\mathbf{S}({\R_+^\times})$ satisfying $f\ast_\times h=0$. Moreover, $x^\lambda P(\log x)$ belongs to $\mathcal{T}(h)$ if and only if $\lambda$ is a pole of order at least $n$ of the Mellin--Carleman transform $\mathsf{MC}(h)(s)$ of $h$.
\end{proposition}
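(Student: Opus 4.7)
The plan is to transport the classical Kahane argument \cite{Kah} from the additive (Laplace) to the multiplicative (Mellin) setting via the homeomorphisms already used in the paper, splitting the result into the two equivalences
\[
x^\lambda P(\log x) \in \mathcal{T}(h) \Longleftrightarrow (\text{vanishing condition on } \mathsf{M}(f)) \Longleftrightarrow (\text{pole condition on } \mathsf{MC}(h)).
\]
For the first equivalence, I would begin by combining Proposition~\ref{proposition_td_mp} with Hahn--Banach in the weak-$*$ dual pairing (using the reflexive identification $\mathbf{S}(\R_+^\times)^{**} = \mathbf{S}(\R_+^\times)$): an element $\varphi \in \mathbf{S}(\R_+^\times)^*$ lies in $\mathcal{T}(h)$ if and only if $\langle f, \varphi \rangle = 0$ for every $f \in \mathbf{S}(\R_+^\times)$ annihilating $\mathcal{T}(h)$. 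The identity $\langle f, \tau_y^{\times,*} h \rangle = (\check f \ast_\times h)(y)$ then identifies this annihilator (after the multiplicative involution $f \mapsto \check f$) with $\{f \in \mathbf{S}(\R_+^\times) : f \ast_\times h = 0\}$.

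Next, evaluating the pairing against exponential monomials,
\[
\langle f, x^\lambda (\log x)^k \rangle = \int_0^{+\infty} f(x)\, x^\lambda (\log x)^k \frac{\dd x}{x} = \mathsf{M}(f)^{(k)}(\lambda),
\]
converts vanishings of $\langle f, x^\lambda P(\log x)\rangle$ into linear relations among the derivatives of $\mathsf{M}(f)$ at $\lambda$. The key step is then translation invariance: a direct calculation yields
\[
\tau_y^{\times,*}\bigl(x^\lambda P(\log x)\bigr) = y^\lambda \, x^\lambda P(\log x + \log y),
\]
so varying $y$ and taking suitable linear combinations recovers every individual monomial $x^\lambda (\log x)^k$ for $0 \leq k \leq \deg P$ inside $\mathcal{T}(x^\lambda P(\log x))$. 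Because $\mathcal{T}(h)$ is closed and translation invariant, $x^\lambda P(\log x) \in \mathcal{T}(h)$ forces $\mathsf{M}(f)^{(k)}(\lambda) = 0$ for all such $k$ and every $f$ in the annihilator, i.e.\ $\lambda$ is a zero of $\mathsf{M}(f)$ of the appropriate order $n$. The converse direction of the first equivalence is then immediate from the Hahn--Banach characterization.

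For the second equivalence I would use the defining identity
\[
\mathsf{MC}(h)(s) \cdot \mathsf{M}(f)(s) = \mathsf{M}(f \ast_\times h^+)(s),
\]
whose right-hand side is entire by Lemma~\ref{lem_01}. Taking $\mathrm{ord}_\lambda$ yields
\[
\mathrm{ord}_\lambda \mathsf{M}(f) + \mathrm{ord}_\lambda \mathsf{MC}(h) = \mathrm{ord}_\lambda \mathsf{M}(f \ast_\times h^+) \geq 0,
\]
so if $\mathsf{MC}(h)$ has a pole of order at least $n$ at $\lambda$ then $\mathrm{ord}_\lambda \mathsf{M}(f) \geq n$ for every $f$ in the annihilator. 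The reverse direction is the main obstacle: it amounts to showing that $\min_{f \in \mathrm{ann}} \mathrm{ord}_\lambda \mathsf{M}(f)$ actually coincides with the pole order of $\mathsf{MC}(h)$ at $\lambda$, which requires exhibiting some $f^* \in \mathrm{ann}$ with $\mathsf{M}(f^* \ast_\times h^+)(\lambda) \neq 0$. This is done following Kahane's original argument, exploiting the richness of $\mathbf{S}(\R_+^\times)$ (which contains test functions such as $\exp(-x - 1/x)$ whose Mellin transforms are entire and nonvanishing at any prescribed $\lambda$) together with the fact that the annihilator is stable under convolution with elements of $\mathbf{S}(\R_+^\times)$, so that one can successively adjust any candidate $f$ until its Mellin transform realizes the minimal vanishing order at $\lambda$.
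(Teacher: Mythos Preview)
The paper does not actually prove this proposition: it is merely stated as ``the analogue of \cite[Theorem Page 23]{Kah}'' and no argument is given. Your proposal is therefore not competing with a proof in the paper but supplying one, and the route you outline is precisely the classical Kahane argument transported to the multiplicative setting. The two main ingredients you isolate are the right ones: the Hahn--Banach identification of $\mathcal{T}(h)$ with the bi-annihilator (so that membership is tested by pairing against every $f$ with $f\ast_\times h=0$), and the translation identity $\tau_y^{\times,*}(x^\lambda P(\log x))=y^\lambda x^\lambda P(\log x+\log y)$, which shows that the $(n{+}1)$-dimensional span of the monomials $x^\lambda(\log x)^k$, $0\le k\le\deg P$, is already contained in the closed translation-invariant space generated by $x^\lambda P(\log x)$. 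Your treatment of the second equivalence via the factorisation $\mathsf{M}(f)\cdot\mathsf{MC}(h)=\mathsf{M}(f\ast_\times h^+)$ and the ideal property of the annihilator under convolution is also the standard device; the ``adjustment'' step (convolving a given $f$ by an element of $\mathbf{S}(\R_+^\times)$ whose Mellin transform does not vanish at $\lambda$, so as to hit the minimal vanishing order) is exactly how Kahane closes the argument.

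One small caution: as stated in the paper the proposition has an apparent off-by-one, since a polynomial of degree $n$ produces the monomials $x^\lambda(\log x)^k$ for $0\le k\le n$, and the translation trick then forces $\mathsf{M}(f)^{(k)}(\lambda)=0$ for all $k\le n$, i.e.\ a zero of order at least $n{+}1$ rather than $n$. This is a wrinkle in the formulation rather than in your argument; just be explicit about which convention you are using when you write it up.
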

%.........................................................................................
\section{Mean-periodicity and analytic properties of boundary terms}\label{sec_mbp0}%
\subsection{The general issue}\label{subsec_func}%
If $f(x)$ is a real-valued function on $\R_+^\times$ satisfying
\begin{enumerate}
\item\label{eq_1}
$f(x)$ is of \emph{rapid decay} as $x\to+\infty$ namely $f(x)=O\left(x^{-A}\right)$ for all  $A>0$ as $x\to+\infty$,
\item
$f(x)$ is of \emph{polynomial order} as $x\to0^+$ namely $f(x)=O\left(x^{A}\right)$ for some $A>0$ as $x\to0^+$
\end{enumerate}
then its \emph{Mellin transform}
\[
\Me(f)(s)\coloneqq \int_0^{+\infty}f(x)x^s\frac{\dd x}{x}
\]
is a holomorphic function on $\Re{(s)}\gg 0$.
%%!
We are interested in necessary and sufficient conditions on $f$, which imply the meromorphic continuation and functional equation of $\Me(f)(s)$. We get immediately
\begin{equation*}
\Me(f)(s)=\varphi_{f,\epsilon}(s)+\omega_{f,\epsilon}(s)
\end{equation*}
where
\begin{equation*}
\varphi_{f,\epsilon}(s)\coloneqq \int_1^{+\infty}f(x)x^s\frac{\dd x}{x}+\epsilon\int_1^{+\infty}f(x)x^{1-s}\frac{\dd x}{x}
\end{equation*}
and
\begin{equation}\label{eq_omega}
\omega_{f,\epsilon}(s)\coloneqq \int_0^1h_{f,\epsilon}(x)x^s\frac{\dd x}{x}
\end{equation}
with
\begin{equation}\label{102}
h_{f,\epsilon}(x)=f(x)-\epsilon x^{-1}f\left(x^{-1}\right)
\end{equation}
where  $\epsilon$ is $\pm 1$. Assumption \eqref{eq_1} ensures that $\varphi_{f,\epsilon}(s)$ is an entire function satisfying the functional equation
\begin{equation*}
\varphi_{f,\epsilon}(s)=\epsilon\varphi_{f,\epsilon}(1-s).
\end{equation*}
Thus, the meromorphic continuation of $\omega_{f,\epsilon}(s)$ is equivalent to the meromorphic continuation of $\Me(f)(s)$,  and the functional equation $\omega_{f,\epsilon}(s)=\epsilon\omega_{f,\epsilon}(1-s)$ is equivalent to the functional equation $\Me(f)(s)=\epsilon\Me(f)(1-s)$. The change of variable $x=e^{-t}$ shows that $\omega_{f,\epsilon}(s)$ is the \emph{Laplace transform} of
\begin{equation}\label{eq_H}
H_{f,\epsilon}(t)=h_{f,\epsilon}\left(e^{-t}\right)
\end{equation}
namely
\begin{equation} \label{104}
\omega_{f,\epsilon}(s)=\La\left(H_{f,\epsilon}\right)(s)=\int_0^{+\infty}H_{f,\epsilon}(t)e^{-st}\dd t.
\end{equation}
The functions $h_{f,\epsilon}(x)$ and $H_{f,\epsilon}(t)$ are called \emph{boundary terms}, having in mind the motivation given in Section \ref{subsec_classical}.
The \textbf{issue} is then the following one. What property of the boundary terms ensures that their Laplace transforms admit a meromorphic continuation to $\C$ and satisfy a functional equation? One possible answer, in which the keyword is mean-periodicity, is given in the two following parts. We shall see there that mean-periodicity of $h_{f,\epsilon}(x)$ or $H_{f,\epsilon}(t)$ is a sufficient condition.\label{sec_mpb} \newline\newline
In this section, $\mathfrak{X}_+$ will be one of the following spaces
\begin{eqnarray*}
\mathcal{C}({\R}) & \text{defined in Section \ref{ssec_C},} \\
\mathcal{C}^\infty({\R}) & \text{defined in Section \ref{ssec_C},} \\
\mathcal{C}_{\rm exp}^\infty({\R}) & \text{defined in \eqref{eq_exp-growth},} \\
\mathbf{S}(\R)^* & \text{whose dual is defined in \eqref{eq_strong_+}}
\end{eqnarray*}
whereas $\mathfrak{X}_\times$ will be one of the following spaces
\begin{eqnarray*}
\mathcal{C}({\R_+^\times}) & \text{defined in Section \ref{ssec_C},} \\
\mathcal{C}^\infty({\R_+^\times}) & \text{defined in Section \ref{ssec_C},} \\
\mathcal{C}_{\rm poly}^\infty({\R_+^\times}) & \text{defined in \eqref{eq_poly-growth},} \\
\mathbf{S}(\R_+^\times)^* & \text{whose dual is defined in \eqref{eq_strong}.}
\end{eqnarray*}
For the definitions of $\mathfrak{X}_+$-mean periodic functions and of $\mathfrak{X}_\times$-mean periodic functions, we refer the reader to Section \ref{sec_mp}.
%..................................................................................................
\subsection{Mean-periodicity and meromorphic continuation of boundary terms}%
The general theory of mean-periodic functions asserts that if $h_{f,\epsilon}$, defined in \eqref{102}, is $\mathfrak{X}_\times$-mean-periodic then $\omega_{f,\epsilon}$, defined in \eqref{eq_omega}, admits a meromorphic continuation to $\C$. More precisely, the \emph{Mellin--Carleman transform} $\MeCa\left(h_{f,\epsilon}\right)$ of $h_{f,\epsilon}$ (see Definition \ref{def_MC}) is the meromorphic continuation of $\omega_{f,\epsilon}$ (see Proposition \ref{proposition_MC} and Proposition \ref{prop_MC2}). Similarly, if $H_{f,\epsilon}$ defined in \eqref{eq_H} is $\mathfrak{X}_+$-mean-periodic then its Laplace transform $\omega_{f,\epsilon}(s)$ (see \eqref{104}) admits a meromorphic continuation to $\C$. Indeed, the \emph{Laplace--Carleman transform} $\LaCa\left(H_{f,\epsilon}\right)$ of $H_{f,\epsilon}$ (see Definition \eqref{def_LC}) is the meromorphic continuation of $\omega_{f,\epsilon}$ (see Proposition \ref{prop_ana_LaCa}).
%.........................................................................................
\subsection{Mean-periodicity and functional equation of boundary terms}%
Let us focus on the eventual functional equation satisfied by $\omega_{f,\epsilon}(s)$. Note that $h_{f,\epsilon}(x)$ satisfies the functional equation
\begin{equation}\label{105}
h_{f,\epsilon}\left(x^{-1}\right)=-\epsilon xh_{f,\epsilon}(x).
\end{equation}
according to \eqref{102}. In other words, the function $\widetilde{h}_{f,\epsilon}\coloneqq \sqrt{x}h_{f,\epsilon}(x)$ satisfies
\begin{equation*}
\widetilde{h}_{f,\epsilon}\left(x^{-1}\right)=-\epsilon\widetilde{h}_{f,\epsilon}\left(x\right).
\end{equation*}
In terms of $H_{f,\epsilon}(t)$, it exactly means that the function $\widetilde{H}_{f,\epsilon}(t)\coloneqq e^{-t/2}H_{f,\epsilon}(t)$ satisfies
\begin{equation*}
\widetilde{H}_{f,\epsilon}(-t)=-\epsilon\widetilde{H}_{f,\epsilon}(t).
\end{equation*}
In general, \eqref{105} does not imply the functional equation $\omega_{f,\epsilon}(s)=\epsilon\omega_{f,\epsilon}(1-s)$, even if $\omega_{f,\epsilon}(s)$ admits a meromorphic continuation to $\C$. For example, if $f(x)=e^{-x}$ and $\epsilon=1$ then $h_f(x)=e^{-x}-x^{-1}e^{-1/x}$ and $\Me(f)(s)=\Gamma(s)$ is a meromorphic function on $\C$, which does not satisfy $\Me(f)(s)=\Me(f)(1-s)$.\newline\newline
Let us assume that $\omega_{f,\epsilon}(s)$ satisfies the functional equation $\omega_{f,\epsilon}(s)=\epsilon\omega_{f,\epsilon}(1-s)$. It can be formally written as
\begin{equation}\label{106}
\int_{0}^{1}h_{f,\epsilon}(x)x^s\frac{\dd x}{x}=\epsilon\int_{0}^{1}h_{f,\epsilon}(x)x^{1-s}\frac{\dd x}{x}.
\end{equation}
The right-hand side is equal to
\[
-\epsilon^2\int_{0}^{1}h_{f,\epsilon}(x^{-1})x^{-s}\frac{\dd x}{x}=-\int_{1}^{+\infty}h_{f,\epsilon}(x)  x^{s}\frac{\dd x}{x}
\]
according to \eqref{105}. Hence we get
\begin{equation}\label{107}
\int_{0}^{1}h_{f,\epsilon}(x)x^s\frac{\dd x}{x}+\int_{1}^{+\infty}h_{f,\epsilon}(x) x^{s}\frac{\dd x}{x}=0.
\end{equation}
Conversely, if we suppose \eqref{107} then we formally obtain \eqref{106} by using \eqref{105}. As a consequence, we guess that, under the meromorphic continuation of $\omega_{f,\epsilon}(s)$, the functional equation of $\omega_{f,\epsilon}(s)$ is equivalent to \eqref{105} and \eqref{107}, 
and \eqref{107} corresponds to  mean-periodicity. \newline\newline
Once again, the general theory of mean-periodic functions asserts that if $h_{f,\epsilon}(x)$, defined in \eqref{102}, is $\mathfrak{X}_\times$-mean-periodic then the Mellin--Carleman transform $\MeCa\left(\widetilde{h}_{f,\epsilon}\right)(s)$ of $\widetilde{h}_{f,\epsilon}(x)$ satisfies the functional equation
\begin{equation*}
\MeCa\left(\widetilde{h}_{f,\epsilon}\right)(s)=\epsilon\MeCa\left(\widetilde{h}_{f,\epsilon}\right)(-s)
\end{equation*}
according to Proposition \ref{lemma_mpC}. This is equivalent to the functional equation
\begin{equation*}
\omega_{f,\epsilon}(s)=\epsilon\omega_{f,\epsilon}(1-s)
\end{equation*}
since $\MeCa\left(\widetilde{h}_{f,\epsilon}\right)(s)=\MeCa\left(h_{f,\epsilon}\right)(s+1/2)$. Similarly, if $H_{f,\epsilon}(t)$ defined in \eqref{eq_H} is $\mathfrak{X}_+$-mean-periodic then the Laplace--Carleman transform $\LaCa\left(\widetilde{H}_{f,\epsilon}\right)(s)$ of $\widetilde{H}_{f,\epsilon}(t)$ satisfies the functional equation
\begin{equation*}
\LaCa\left(\widetilde{H}_{f,\epsilon}\right)(s)=\epsilon\LaCa\left(\widetilde{H}_{f,\epsilon}\right)(-s)
\end{equation*}
according to Proposition \ref{lemma_mpL}. This is equivalent to the functional equation
\begin{equation*}
\omega_{f,\epsilon}(s)=\epsilon\omega_{f,\epsilon}(1-s)
\end{equation*}
since $\LaCa\left(\widetilde{H}_{f,\epsilon}\right)(s)=\LaCa\left(H_{f,\epsilon}\right)(s+1/2)$.

\begin{remark}\label{rem_nmp}
One can give various kinds of examples of smooth functions on the real line of exponential growth, which on one hand are not $\mathfrak{X}_+$-mean-periodic for every functional space $\mathfrak{X}_+$ of functions  given at page \pageref{sec_mpb}, and on the other hand whose Laplace transform extends to a symmetric meromorphic function on the complex plane. A series of  examples due to A.~Borichev is supplied by the following general  
construction. First, recall that if real $b_n>1$ are some of  zeros of a function holomorphic and bounded in the half-place $\Re{(s)}>0$ then $\sum 1/b_n<\infty$, see e.g. \cite[Problem 298, sect.2 Ch.6 Part III]{MR1492447}. Using the map $z\to \exp(-iz)$  we deduce that if $ib_n$, $b_n>0$, are some of zeros of an entire function bounded in the vertical strip $\abs{\Re{(s)}}<\pi/2$ then $\sum \exp{(-b_n)}<\infty$. Now choose  a sequence $(a_n)$ of positive real numbers such that
the set $\{i a_n\}$ is not a subset of all zeros of any entire function
bounded in the  vertical strip $\abs{\Re{(s)}}<\pi/2$. For example, using what has been said previously in this remark, one can take $a_n=\log n$. Choose sufficiently fast decaying non-zero coefficients $c_n$ so that $H(t)=\sum c_n \sin (a_n t)$ belongs to the space $\mathfrak{X}_+$ of smooth functions of exponential growth and its Laplace transform $w(s)=\sum c_n a_n/(s^2+a_n^2)$ is a symmetric meromorphic function on the complex plane. Assume that $H(t)$ is $\mathfrak{X}_+$-mean-periodic. Then $H\ast_+\tau=0$ for some non-zero $\tau\in \mathfrak{X}_+^*$. Convolving $\tau$ with a smooth function we can assume that $\tau$ is a smooth function of over exponential decay. Mean-periodicity of $H(t)$ implies that the meromorphic function $w(s)$ coincides with the Laplace--Carleman transform of $H(t)$, and so the set of poles $\{\pm ia_n\}$ of $w(s)$ is a subset of zeros of the two sided Laplace transform of $\tau$. Note that the two sided Laplace transform of a smooth function with over exponential decay is an entire function $v(s)$ such that for every positive integer $m$ the function $\abs{v(s)}(1+\abs{s})^m$ is bounded in the vertical strip $\abs{\Re{(s)}}<m$. The choice of the sequence $(a_n)$ gives a contradiction. Hence the function $H(t)$ is not $\mathfrak{X}_+$-mean-periodic and its Laplace transform extends to a symmetric meromorphic function.
\end{remark}
%...........................................................................................
\subsection{Statement of the result}%
Let us encapsulate all the previous discussion of this section in the following theorem.
\begin{theorem}\label{thm_rmp}
Let $\mathfrak{X}_\times$ be $\mathcal{C}(\R_+^\times)$ or $\mathcal{C}^\infty(\R_+^\times)$ or $\mathcal{C}_{\rm poly}^\infty({\R_+^\times})$ or $\mathbf{S}(\R_+^\times)^*$  and $\mathfrak{X}_+$ be $\mathcal{C}(\R)$ or $\mathcal{C}^\infty(\R)$ or $\mathcal{C}_{\rm exp}^\infty({\R})$ or $\mathbf{S}(\R)^*$. Let $f(x)$ be a real-valued function on $\R_+^\times$ of rapid decay as $x\to+\infty$ and of polynomial order as $x\to0^+$. Let $\epsilon=\pm 1$.
\begin{itemize}
\item
If $h_{f,\epsilon}(x)=f(x)-\epsilon x^{-1}f\left(x^{-1}\right)$ is a $\mathfrak{X}_\times$-mean-periodic function then the Mellin transform $\Me(f)(s)$ of $f(x)$ admits a meromorphic continuation to $\C$ and satisfies the functional equation
\begin{equation*}
\Me(f)(s)=\epsilon\Me(f)(1-s).
\end{equation*}
More precisely,
\begin{equation*}
\Me(f)(s)=\int_1^{+\infty}f(x)x^s\frac{\dd x}{x}+\epsilon\int_1^{+\infty}f(x)x^{1-s}\frac{\dd x}{x}+\omega_{f,\epsilon}(s)
\end{equation*}
where $\omega_{f,\epsilon}(s)$ coincides on $\Re{(s)}\gg0$ with $\int_0^{1}h_{f,\epsilon}(x)x^s\dd x/x$ , admits a meromorphic continuation to $\C$ given by the Mellin--Carleman transform $\MeCa\left(h_{f,\epsilon}\right)(s)$ of $h_{f,\epsilon}(t)$ and satisfies the functional equation
\begin{equation*}
\omega_{f,\epsilon}(s)=\epsilon\omega_{f,\epsilon}(1-s).
\end{equation*}
\item
If $H_{f,\epsilon}(t)=h_{f,\epsilon}(e^{-t})$ is a $\mathfrak{X}_+$-mean-periodic function then the Mellin transform $\Me(f)(s)$ of $f(x)$ admits a meromorphic continuation to $\C$ and satisfies the functional equation
\begin{equation*}
\Me(f)(s)=\epsilon\Me(f)(1-s).
\end{equation*}
More precisely,
\begin{equation*}
\Me(f)(s)=\int_1^{+\infty}f(x)x^s\frac{\dd x}{x}+\epsilon\int_1^{+\infty}f(x)x^{1-s}\frac{\dd x}{x}+\omega_{f,\epsilon}(s)
\end{equation*}
where $\omega_{f,\epsilon}(s)$ coincides on $\Re{(s)}\gg0$ with the Laplace transform $\La\left(H_{f,\epsilon}\right)(s)$ of $H_{f,\epsilon}(t)$, admits a meromorphic continuation to $\C$ given by the Laplace--Carleman transform $\LaCa\left(H_{f,\epsilon}\right)(s)$ of $H_{f,\epsilon}(t)$ and satisfies the functional equation
\begin{equation*}
\omega_{f,\epsilon}(s)=\epsilon\omega_{f,\epsilon}(1-s).
\end{equation*}
\end{itemize}
\end{theorem}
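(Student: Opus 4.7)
The plan is to reduce both cases to establishing meromorphic continuation and the functional equation for the auxiliary function $\omega_{f,\epsilon}(s)$, and then apply the Mellin--Carleman (respectively Laplace--Carleman) machinery developed in Section~\ref{sec_mp} to the rescaled boundary term.

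First, on a right half-plane where all integrals converge absolutely, I would record the decomposition $\Me(f)(s)=\varphi_{f,\epsilon}(s)+\omega_{f,\epsilon}(s)$ with
\begin{equation*}
\varphi_{f,\epsilon}(s)=\int_1^{+\infty}f(x)x^s\frac{\dd x}{x}+\epsilon\int_1^{+\infty}f(x)x^{1-s}\frac{\dd x}{x},\quad \omega_{f,\epsilon}(s)=\int_0^1 h_{f,\epsilon}(x)x^s\frac{\dd x}{x},
\end{equation*}
which is verified by the substitution $y=x^{-1}$ in the $f(x^{-1})$-piece of $\omega_{f,\epsilon}$. The rapid decay of $f$ at $+\infty$ makes $\varphi_{f,\epsilon}$ entire, and its definition is manifestly $s\mapsto 1-s$ invariant up to a factor $\epsilon$. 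Consequently, proving the theorem amounts to showing that $\omega_{f,\epsilon}(s)$ extends meromorphically to $\C$ and satisfies $\omega_{f,\epsilon}(s)=\epsilon\omega_{f,\epsilon}(1-s)$.

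For the multiplicative case, the assumptions on $f$ (rapid decay at $+\infty$ and polynomial order at $0^+$) force $h_{f,\epsilon}\in L^1_{\mathrm{loc},\mathrm{poly}}(\R_+^\times)$, so Propositions~\ref{proposition_MC} and~\ref{prop_MC2} apply and give that $\MeCa(h_{f,\epsilon})(s)$ is meromorphic on $\C$ and agrees with $\omega_{f,\epsilon}(s)$ on $\Re(s)\gg 0$. For the functional equation I would pass to the rescaling $\widetilde h_{f,\epsilon}(x)=\sqrt{x}\,h_{f,\epsilon}(x)$; the identity $h_{f,\epsilon}(x^{-1})=-\epsilon x\,h_{f,\epsilon}(x)$ yields $\widetilde h_{f,\epsilon}(x^{-1})=-\epsilon\,\widetilde h_{f,\epsilon}(x)$, and multiplication by $x^{1/2}$ is a topological automorphism of each of the four spaces $\mathfrak{X}_\times$, so $\widetilde h_{f,\epsilon}$ inherits $\mathfrak{X}_\times$-mean-periodicity from $h_{f,\epsilon}$. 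Applying Proposition~\ref{lemma_mpC} to $f_1=f_2=\widetilde h_{f,\epsilon}$ with the sign taken to be $-\epsilon$ gives $\MeCa(\widetilde h_{f,\epsilon})(-s)=\epsilon\,\MeCa(\widetilde h_{f,\epsilon})(s)$, and the change of variable identity $\MeCa(\widetilde h_{f,\epsilon})(s)=\MeCa(h_{f,\epsilon})(s+1/2)$ translates this into $\omega_{f,\epsilon}(s)=\epsilon\omega_{f,\epsilon}(1-s)$, as desired.

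The additive statement is then obtained by transporting the argument through the homeomorphism $x=e^{-t}$, which matches each $\mathfrak{X}_\times$ with the corresponding $\mathfrak{X}_+$ and sends $h_{f,\epsilon}(x)$ to $H_{f,\epsilon}(t)$; alternatively one runs the same argument directly, invoking Proposition~\ref{prop_ana_LaCa} for the meromorphic continuation $\omega_{f,\epsilon}(s)=\LaCa(H_{f,\epsilon})(s)$ and Proposition~\ref{lemma_mpL} applied to $\widetilde H_{f,\epsilon}(t)=e^{-t/2}H_{f,\epsilon}(t)$, which satisfies $\widetilde H_{f,\epsilon}(-t)=-\epsilon\widetilde H_{f,\epsilon}(t)$, together with the shift $\LaCa(\widetilde H_{f,\epsilon})(s)=\LaCa(H_{f,\epsilon})(s+1/2)$. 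The main obstacle is not any analytic depth but bookkeeping: one must check that the rescaling $h\mapsto\sqrt{x}\,h$ (respectively $H\mapsto e^{-t/2}H$) preserves each of the four relevant spaces together with the corresponding notion of mean-periodicity, and that the absolute convergence hypotheses of Propositions~\ref{prop_MC2} and~\ref{lemma_mpC} (resp.\ of Proposition~\ref{lemma_mpL}) on some right half-plane are furnished by the polynomial-order assumption on $f$ at $0^+$.
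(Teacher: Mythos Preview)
Your proposal is correct and follows essentially the same approach as the paper: the proof of Theorem~\ref{thm_rmp} in the paper is precisely the discussion of Sections~\ref{subsec_func}--3.3, which the theorem explicitly ``encapsulates''. You reproduce each step of that discussion---the decomposition $\Me(f)=\varphi_{f,\epsilon}+\omega_{f,\epsilon}$, the use of Propositions~\ref{proposition_MC} and~\ref{prop_MC2} (resp.\ Proposition~\ref{prop_ana_LaCa}) for meromorphic continuation, and the passage to the rescaled boundary term $\widetilde h_{f,\epsilon}=\sqrt{x}\,h_{f,\epsilon}$ (resp.\ $\widetilde H_{f,\epsilon}=e^{-t/2}H_{f,\epsilon}$) together with Proposition~\ref{lemma_mpC} (resp.\ Proposition~\ref{lemma_mpL}) and the shift $\MeCa(\widetilde h_{f,\epsilon})(s)=\MeCa(h_{f,\epsilon})(s+1/2)$ for the functional equation---and you are, if anything, slightly more explicit than the paper in flagging that multiplication by $x^{1/2}$ must preserve $\mathfrak{X}_\times$-mean-periodicity.
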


%
%............................................................................
\section{On a class of mean-periodic functions arising from number theory}\label{sec_results}%
%..............
%%!
In this section, we show that for a certain class of functions, which naturally come from number theory (zeta functions of arithmetic schemes), their meromorphic continuation and their functional equation are essentially equivalent to  mean-periodicity of some associated functions.
%
%...........................................................
%%!
\subsection{Functions which will supply mean-periodic functions} %
Firstly, we define some suitable set of functions, from which mean-periodic functions will be built.
\begin{definition}\label{def_E}
$\mathcal{F}$ is defined by the set of complex-valued functions $Z(s)$ of the shape
\begin{equation*}
Z(s)=\gamma(s)D(s)
\end{equation*}
where $\gamma(s)$ and $D(s)$ are some meromorphic functions on $\C$ with the following conditions
\begin{itemize}
\item
all the poles of $Z(s)$ belong to the  vertical strip $\abs{\Re{(s)}-1/2}\leq w$ for some $w>0$,
\item
$\gamma(s)$ satisfies the uniform bound
\begin{equation}\label{313}
\forall\sigma\in[a,b],\forall\abs{t}\geq t_0,\quad\abs{\gamma(\sigma+it)}\ll_{a,b,t_0}\abs{t}^{-A}
\end{equation}
for all real numbers $a\leq b$ and every real number $A>0$,
\item
If $\sigma>1/2+w$ then
\begin{equation}\label{eq_AC}
D(\sigma+it)\ll\abs{t}^{A_1}
\end{equation}
for some real number $A_1$,
\item
there exists a real number $A_2$ and a strictly increasing sequence of positive real numbers $\{t_m\}_{m \geq 1}$ satisfying
\begin{equation}\label{314}
\abs{D(\sigma \pm i t_m)}=O\left(t_m^{A_2}\right)
\end{equation}
uniformly for $\sigma\in[1/2-w-\delta,1/2+w+\delta]$ for some $\delta>0$ and for all  integer $m\geq 1$.
\end{itemize}
\end{definition}
If $\lambda$ is a pole of $Z(s)$ in $\mathcal{F}$ of multiplicity $m_\lambda\geq 1$ then the principal part is written as
\begin{equation}\label{principal_part}
Z(s)=\sum_{m=1}^{m_\lambda}\frac{C_m(\lambda)}{(s-\lambda)^m}+O(1)
\end{equation}
when $s\to\lambda$. Note that we can associate to a element $Z\in\mathcal{F}$ its inverse Mellin transform namely
\begin{equation}\label{315}
f_Z(x)\coloneqq \frac{1}{2i\pi}\int_{(c)}Z(s)x^{-s}\dd s
\end{equation}
for $c>1/2+w$. By \eqref{313}, \eqref{314} and the fact that the poles of $Z(s)$ lie in some vertical strip $\abs{\Re{(s)}-1/2}\leq w$ for some $w>0$, we have
\begin{equation} \label{318}
f_{Z}(x)=O\left(x^{-A}\right) \quad  \text{as $x\to+\infty$}
\end{equation}
for all  $A\geq 0$ and
\begin{equation} \label{319}
f_{Z}(x)=O\left(x^{-1/2-w-\epsilon}\right) \quad \text{as $x\to0^+$}.
\end{equation}
As a consequence, $f_Z(x)$ is of rapid decay as $x\to+\infty$ and is at most of polynomial growth as $x\to 0^+$. Let $Z_i(s)=\gamma_i(s)D_i(s)$ for $i=1,2$ be two elements of $\mathcal{F}$ linked by the functional equation
\begin{equation*}
Z_1(s)=\varepsilon Z_2(1-s)
\end{equation*}
where $\epsilon$ is a complex number of absolute value one. Let us define $h_{12}(x)$ by
\begin{equation}\label{316}
h_{12}(x)\coloneqq f_{Z_1}(x)-\varepsilon x^{-1}f_{Z_2}\left(x^{-1}\right)
\end{equation}
and $h_{21}(x)$ by
\begin{equation}\label{316bis}
h_{21}(x)\coloneqq f_{Z_2}(x)-\varepsilon^{-1}x^{-1}f_{Z_1}\left(x^{-1}\right).
\end{equation}
The functions $h_{12}(x)$ and $h_{21}(x)$ are at most of polynomial growth as $x\to0^+$ and $x\to+\infty$ since $f_{Z_1}$ and $f_{Z_2}$ are of rapid decay as $x\to+\infty$ and are at most of polynomial growth as $x\to 0^+$.
The purpose of this section is to establish mean-periodicity of these functions.%
%..........................................................................................
\subsection{Mean-periodicity of these functions}%
\begin{theorem}\label{thm_302}
Let $Z_i(s)=\gamma_i(s)D_i(s)$ for $i=1,2$ be two elements of $\mathcal{F}$ and $\epsilon$ be a complex number of absolute value one. The functions $h_{12}(x)$ and $h_{21}(x)$ defined by \eqref{316} and \eqref{316bis} are continuous functions on ${\R}_+^\times$ satisfying the functional equation
\begin{equation}\label{317}
h_{12}\left(x^{-1}\right)=-\varepsilon xh_{21}(x).
\end{equation}  
In addition, if $Z_1(s)$ and $Z_2(s)$ satisfy the functional equation
\begin{equation} \label{311}
Z_1(s)=\varepsilon Z_2(1-s)
\end{equation}
then
\begin{enumerate}
\item
$h_{12}(x)$ and $h_{21}(x)$ are limits, in the sense of compact uniform convergence, of sums of exponential polynomials\footnote{Remember that exponentials polynomials in $\mathcal{C}(\R_+^\times)$ are given by $x^{-\lambda}P(\log{x})$.} in $\mathcal{C}(\R_+^\times)$,
\item
$h_{12}(x)$ and $h_{21}(x)$ belong to $\mathcal{C}_{\rm poly}^\infty({\R_+^\times})$.
\end{enumerate}
\end{theorem}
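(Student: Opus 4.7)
The strategy is to exploit the inverse Mellin representation \eqref{315} for $f_{Z_1}$ and $f_{Z_2}$, and then shift contours across the critical strip using the functional equation \eqref{311} so as to express $h_{12}$ and $h_{21}$ as sums of residues. First, I would verify that $f_{Z_i}$ belongs to $\mathcal{C}^\infty(\R_+^\times)$: differentiating under the integral in \eqref{315} at the line $\Re(s)=c$ brings down factors like $s(s+1)\cdots(s+n-1)$, which are still controlled because the uniform bound \eqref{313} on $\gamma(s)$ is rapid in $|t|$, dominating any polynomial in $s$ coming from $D$ on the vertical line $\Re(s)=c$ (via \eqref{eq_AC}). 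Combined with \eqref{318}--\eqref{319}, this shows that $f_{Z_i}\in\mathcal{C}^\infty(\R_+^\times)$, is of rapid decay at $+\infty$, and is $O(x^{-1/2-w-\epsilon})$ at $0^+$. From the definitions \eqref{316}--\eqref{316bis}, the functional equation \eqref{317} is a direct algebraic check (unwind $h_{12}(x^{-1})$ and $-\varepsilon x h_{21}(x)$; the $\varepsilon\cdot\varepsilon^{-1}$ cancellation gives the identity). Continuity of $h_{12},h_{21}$ on $\R_+^\times$ is then automatic.

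Next, assuming \eqref{311}, I would shift the contour in
\begin{equation*}
f_{Z_1}(x)=\frac{1}{2i\pi}\int_{(c)}Z_1(s)x^{-s}\,\dd s
\end{equation*}
from $\Re(s)=c>1/2+w$ to $\Re(s)=c'<1/2-w$, integrating over rectangles with horizontal sides at heights $\pm t_m$. The horizontal contributions are bounded by the supremum of $|\gamma(\sigma\pm it_m)D_1(\sigma\pm it_m)|$ times $(c-c')\max(x^{-c},x^{-c'})$; by \eqref{313} and \eqref{314}, this is $O(t_m^{A_2-A})$ for every $A$, hence vanishes as $m\to\infty$. A change of variables $s\mapsto 1-s$ combined with \eqref{311} identifies
\begin{equation*}
\frac{1}{2i\pi}\int_{(c')}Z_1(s)x^{-s}\,\dd s=\varepsilon\cdot\frac{1}{2i\pi}\int_{(1-c')}Z_2(u)x^{u-1}\,\dd u=\varepsilon x^{-1}f_{Z_2}(x^{-1}),
\end{equation*}
since $1-c'>1/2+w$. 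The residues collected in the strip $c'<\Re(s)<c$ are, by \eqref{principal_part},
\begin{equation*}
\mathop{\mathrm{Res}}_{s=\lambda}Z_1(s)x^{-s}=x^{-\lambda}\sum_{m=1}^{m_\lambda}C_m(\lambda)\frac{(-\log x)^{m-1}}{(m-1)!},
\end{equation*}
which is exactly an exponential polynomial. Summing over poles with $|\Im(\lambda)|<t_m$ and letting $m\to\infty$ yields
\begin{equation*}
h_{12}(x)=f_{Z_1}(x)-\varepsilon x^{-1}f_{Z_2}(x^{-1})=\lim_{m\to\infty}\sum_{|\Im(\lambda)|<t_m}\mathop{\mathrm{Res}}_{s=\lambda}Z_1(s)x^{-s},
\end{equation*}
the limit being uniform on compact subsets of $\R_+^\times$ (since the bounds above are uniform for $x$ in any compact set). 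The same argument applied with the roles of $Z_1$ and $Z_2$ swapped handles $h_{21}$.

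Finally, for (2), smoothness of $h_{12}$ and $h_{21}$ on $\R_+^\times$ follows from the smoothness of $f_{Z_1}$ and $f_{Z_2}$ already established. For polynomial growth: near $x=0^+$, $f_{Z_i}(x)=O(x^{-1/2-w-\epsilon})$ by \eqref{319} and $x^{-1}f_{Z_j}(x^{-1})$ is of rapid decay (since $f_{Z_j}$ decays faster than any polynomial at $+\infty$), so $h_{12}, h_{21}$ have at most polynomial growth at $0^+$; near $x=+\infty$, the roles are reversed, giving at most polynomial growth at $+\infty$. Polynomial growth of derivatives is obtained analogously by differentiating \eqref{315} under the integral sign, the rapid decay of $\gamma$ absorbing the additional polynomial factors. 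Hence $h_{12},h_{21}\in\mathcal{C}_{\rm poly}^\infty(\R_+^\times)$.

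\textbf{Main obstacle.} The technical heart is the contour-shift argument: the hypotheses \eqref{313}, \eqref{eq_AC}, \eqref{314} are tailored precisely so that the rectangle horizontal contributions vanish along the sequence $t_m$, while the polynomial bound \eqref{eq_AC} keeps the shifted vertical integral meaningful. Controlling everything uniformly in $x$ on compact subsets of $\R_+^\times$, so that the residue sum converges in $\mathcal{C}(\R_+^\times)$, is the delicate step; it is the sequence $\{t_m\}$, rather than a pointwise sum over poles, that makes the representation work.
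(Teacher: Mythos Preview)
Your proposal is correct and follows essentially the same approach as the paper: shift the inverse-Mellin contour for $f_{Z_1}$ across the critical strip along rectangles at heights $\pm t_m$, use \eqref{313} and \eqref{314} to kill the horizontal segments, invoke the functional equation \eqref{311} to identify the shifted vertical integral with $\varepsilon x^{-1}f_{Z_2}(x^{-1})$, and read off the residues as exponential polynomials. The only cosmetic difference is that the paper works with the rectangle $[1/2-w-\delta,1/2+w+\delta]\times[-T,T]$ and treats the vertical tails beyond $|t|>T$ as separate error terms $R_2$, whereas you shift directly between full vertical lines; if you take $c=1/2+w+\delta$ and $c'=1/2-w-\delta$ your horizontal estimate via \eqref{314} goes through verbatim.
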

\begin{remark}
An equivalent statement of the previous theorem is
\begin{enumerate}
\item
$H_{12}(t)=h_{12}(e^{-t})$ and $H_{21}(t)=h_{21}(e^{-t})$ are two elements of $\mathcal{C}_{\rm exp}^\infty(\R)$ satisfying the functional equation
\begin{equation*}
H_{12}\left(-t\right)=-\varepsilon e^{-t}H_{21}(t),
\end{equation*}
\item
$H_{12}(t)$ and $H_{21}(t)$ are limits, in the sense of compact uniform convergence, of sums of exponential polynomials\footnote{Remember that exponentials polynomials in $\mathcal{C}(\R)$ are given by $P(t)e^{\lambda t}$.} in $\mathcal{C}(\R)$.
\end{enumerate}
\end{remark}
\begin{remark}
$D_i(s)$ ($i=1,2$) have infinitely many poles in practice. Thus, applying this theorem will require some effort to check \eqref{314}.
\end{remark}
\begin{remark}
Of course, the estimate \eqref{314} can be relaxed if we have some stronger estimate for $\gamma(s)$, for instance some exponential decay in vertical strip. The main condition is that one can choose a sequence $\{t_m\}_{m \geq 1}$ such that $\abs{Z(\sigma+it_m)}=O(t_m^{-a})$ uniformly in $\abs{\Re{(s)}-1/2}\leq w+\delta$, $\abs{t}\geq t_0$ and for some $a>1$.
\end{remark}
\begin{proof}[\proofname{} of Theorem~\ref{thm_302}]%
Equation \eqref{317} is essentially trivial. From definition \eqref{315}, $f_{Z_1}(x)$ and $f_{Z_2}(x)$ are continuous functions on ${\R}_+^\times$, which entails that $h_{12}(x)$ and $h_{21}(x)$ are also continuous functions on ${\R}_+^\times$ by \eqref{316}. The functional equation \eqref{317} is an immediate consequence of \eqref{316} and \eqref{316bis}. The second assertion is implied by the first one and the fact that
\[
h_{12}(x),h_{21}(x)=
\begin{cases}
\, O\left(x^{-1/2+w+\delta}\right) & \text{as $x\to+\infty$}, \\
\, O\left(x^{-1/2-w-\delta}\right) & \text{as $x\to0^+$}.
\end{cases}
\]
since $f_{Z_i}(x)=O(x^{-1/2-w-\delta})$ ($i=1,2$) for some $\delta>0$. Let us focus on the first assertion for  $h_{12}$ only. We consider the clockwise oriented rectangle $\mathcal{R}=[1/2-w-\delta,1/2+w+\delta]\times[-T,T]$. We get
\begin{equation*}
\frac{1}{2i\pi}\int_{\mathcal{R}}Z_1(s)x^{-s}\dd s=\sum_{\substack{\text{$\lambda$ pole of $Z_1$} \\
\text{of multiplicity $m_\lambda$} \\
\abs{\Im{(\lambda)}}<T}}\sum_{m=1}^{m_\lambda}C_m(\lambda)\frac{(-1)^{m-1}}{(m-1)!}\log^{m-1}{(x)}x^{-\lambda}
\end{equation*}
where $C_m(\lambda)\neq 0$ ($1\leq m\leq m_\lambda$) are defined in \eqref{principal_part}. We cut the rectangle symmetrically at the points $1/2\pm iT$. By the functional equation \eqref{311}, the integral on the left part of this rectangle equals $-\epsilon_1\epsilon_2x^{-1}$ times the same integral on the right part of this rectangle in which $Z_1$ is replaced by $Z_2$ and $x$ is replaced by $x^{-1}$. Thus,
\begin{multline}\label{401}
h_{12}(x)=\sum_{\substack{\text{$\lambda$ pole of $Z_1$} \\
\text{of multiplicity $m_\lambda$} \\
\abs{\Im{(\lambda)}}<T}}\sum_{m=1}^{m_\lambda}C_m(\lambda)\frac{(-1)^{m-1}}{(m-1)!}\log^{m-1}{(x)}x^{-\lambda} \\
+R_1(Z_1,x,T)+R_2(Z_1,x,T)-\epsilon_1\epsilon_2x^{-1}(R_1(Z_2,x^{-1},T)-R_2(Z_2,x^{-1},T))
\end{multline}
where $R_1(Z_1,x,T)$ (and similarly $R_1(Z_1,x^{-1},T)$) is given by
\begin{multline} \label{402}
R_1(Z_1,x,T)=-\frac{1}{2i\pi}\int_{1/2}^{1/2+w+\delta}Z_1(\sigma+iT)x^{-(\sigma+iT)}\dd\sigma \\
+\frac{1}{2i\pi}\int_{1/2}^{1/2+w+\delta}Z_1(\sigma-iT)x^{-(\sigma-iT)}\dd\sigma
\end{multline}
and $R_2(Z_1,x,T)$ (and similarly $R_2(Z_2,x^{-1},T)$) by
\begin{multline*}
R_2(Z_1,x,T)=\frac{1}{2\pi}\int_{T}^{\infty}Z_1(1/2+w+\delta+it)x^{-1/2-w-\delta-it}\dd t \\
+\frac{1}{2\pi}\int_{-\infty}^{-T}Z_1(1/2+w+\delta+it)x^{-1/2-w-\delta-it}\dd t.
\end{multline*}
Equation \eqref{eq_AC} implies that for $i=1,2$,
\begin{equation*}
R_2(Z_i,x,T)=O\left(T^{-A}\right)
\end{equation*}
uniformly on every  compact set of $\R_+^\times$ and for every  large $A>0$. 
By \eqref{313} and  \eqref{314}, we can take an increasing sequence $\left(t_m\right)_{m \geq 1}$ so that for $i=1,2$
\begin{equation} \label{406}
R_1(Z_i,x,t_m) = O(t_m^{-A})
\end{equation}
uniformly on every  compact set of $\R_+^\times$ and for every  large $A>0$. As a consequence,
\begin{equation}\label{eq_series}
h_{12}(x)=\sum_{\substack{\text{$\lambda$ pole of $Z_1$} \\
\text{of multiplicity $m_\lambda$}}}\sum_{m=1}^{m_\lambda}C_m(\lambda)\frac{(-1)^{m-1}}{(m-1)!}\log^{m-1}{(x)}x^{-\lambda}
\end{equation}
in the sense of the compact uniform convergence.
\end{proof}
A closer inspection of the proof of the previous theorem reveals that, in some particular case, we also establish a kind of summation formula for the poles of the functions belonging to $\mathcal{F}$. 
This formula is described in the following corollary.
\begin{corollary} \label{cor_201}
Let $Z(s)=\gamma(s)D(s)$ be an element of $\mathcal{F}$, 
$\epsilon=\pm 1$ and $\phi$ be a real-valued smooth compactly supported function on $\R_+^\times$. If $Z(s)$ satisfies  the functional equation
\begin{equation*}
Z(s)=\varepsilon Z(1-s)
\end{equation*}
and
\begin{equation*}
D(s)=\sum_{m\geq 1}\frac{d_m}{m^s}
\end{equation*}
for $\Re{(s)}>\sigma_0$ then
\begin{equation*}
\sum_{\substack{\text{$\lambda$ pole of $Z$} \\
\text{of multiplicity $m_\lambda$}}}\sum_{m=1}^{m_\lambda}\frac{C_m(\lambda)}{(m-1)!}\Me(\phi)^{(m-1)}(\lambda) 
=\sum_{m\geq 1}d_m\left[(\phi \ast_\times \kappa)(m)-\varepsilon(\phi^\vee\ast_\times\kappa)(m)\right)
\end{equation*}
where $\phi^\vee(x)=x^{-1}\phi(x^{-1})$, $\kappa(x)$ is the inverse Mellin transform of $\gamma(s)$ namely
\begin{equation*}
\kappa(x)\coloneqq \frac{1}{2i\pi}\int_{(c)}\gamma(s)x^{-s}\dd s
\end{equation*}
for $c>\sigma_0$.
\end{corollary}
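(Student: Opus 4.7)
The plan is to pair the expansion \eqref{eq_series} from the proof of Theorem \ref{thm_302} against $\phi$, but carried out on the contour-integral side rather than the series side. Equivalently, I transform both sums on the right-hand side into vertical line integrals via Mellin inversion and then close the contour, picking up residues at the poles of $Z$.

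Since $\phi$ is smooth and compactly supported on $\R_+^\times$, its Mellin transform $\Me(\phi)(s)$ is entire and of rapid decay in every vertical strip. As $\Me(\kappa)(s)=\gamma(s)$, Mellin inversion applied to the multiplicative convolution gives
\[
(\phi\ast_\times\kappa)(x) = \frac{1}{2i\pi}\int_{(c)}\Me(\phi)(s)\gamma(s)\,x^{-s}\,ds
\]
for any $c$ to the right of the poles of $\gamma$. Choosing $c>\max(\sigma_0,1/2+w)$, the Dirichlet series for $D$ is absolutely convergent on $\Re(s)=c$, and the rapid decay of $\Me(\phi)$ combined with \eqref{313} justifies Fubini, yielding
\[
\sum_{m\geq 1}d_m(\phi\ast_\times\kappa)(m) = \frac{1}{2i\pi}\int_{(c)}\Me(\phi)(s)Z(s)\,ds.
\]
A direct change of variable shows $\Me(\phi^\vee)(s)=\Me(\phi)(1-s)$, so the same reasoning applied to $\phi^\vee$, followed by the substitution $s\mapsto 1-s$ and the functional equation $Z(s)=\varepsilon Z(1-s)$, gives
\[
\varepsilon\sum_{m\geq 1}d_m(\phi^\vee\ast_\times\kappa)(m) = \frac{1}{2i\pi}\int_{(1-c)}\Me(\phi)(s)Z(s)\,ds.
\]

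Subtracting, the right-hand side of the corollary equals the difference of vertical line integrals of $\Me(\phi)(s)Z(s)$ at $\Re(s)=c$ and $\Re(s)=1-c$. I close these into a counter-clockwise rectangular contour with horizontal edges at $\Im(s)=\pm t_n$, where $(t_n)$ is the sequence from \eqref{314}, and let $n\to\infty$. All poles of $Z$ lie in $\abs{\Re(s)-1/2}\leq w\subset(1-c,c)$, so the residue theorem identifies the limiting value with $\sum_{\lambda}\mathrm{Res}_{s=\lambda}\bigl(\Me(\phi)(s)Z(s)\bigr)$. Inserting the principal part \eqref{principal_part} of $Z$ at $\lambda$ together with the Taylor expansion of $\Me(\phi)$ at $\lambda$ gives
\[
\mathrm{Res}_{s=\lambda}\bigl(\Me(\phi)(s)Z(s)\bigr) = \sum_{m=1}^{m_\lambda}\frac{C_m(\lambda)}{(m-1)!}\Me(\phi)^{(m-1)}(\lambda),
\]
which is exactly the left-hand side of the claimed identity.

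The main obstacle is to show that the two horizontal contour contributions vanish as $n\to\infty$. This relies on the same mechanism as in the proof of Theorem \ref{thm_302}: the bound \eqref{314} on $D$ along $\Im(s)=\pm t_n$, combined with the vertical-strip decay \eqref{313} for $\gamma$, forces $\abs{Z(\sigma\pm it_n)}$ to be small uniformly on $\sigma\in[1-c,c]$, while the rapid decay of $\Me(\phi)$ in vertical strips absorbs the remaining factors. Absolute convergence of the resulting residue sum over all (in general infinitely many) poles of $Z$ is then guaranteed by the same estimates that establish the compact uniform convergence of the series \eqref{eq_series}.
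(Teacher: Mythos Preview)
Your argument is correct. The paper's proof takes a slightly different (and shorter) route: it applies Theorem~\ref{thm_302} with $Z_1=Z_2=Z$, writes $f_Z(x)=\sum_{m\geq 1}d_m\kappa(mx)$ and hence $h_{12}(x)=\sum_m d_m\bigl[\kappa(mx)-\varepsilon x^{-1}\kappa(mx^{-1})\bigr]$, equates this with the exponential-polynomial expansion \eqref{eq_series} already established, and then integrates both sides against $\phi(x^{-1})\,dx/x$. A short computation identifies the two resulting integrals with the left- and right-hand sides of the corollary.

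The difference is one of packaging. The paper invokes the conclusion of Theorem~\ref{thm_302} and only needs to justify termwise integration, which is immediate since $\phi$ has compact support and \eqref{eq_series} converges uniformly on compacta. You instead re-run the contour shift of Theorem~\ref{thm_302} with the extra factor $\Me(\phi)(s)$ inserted; this is the Mellin-dual of the paper's integration step, and the rapid decay of $\Me(\phi)$ in vertical strips makes all the interchanges and the vanishing of the horizontal contours transparent. Your version is self-contained but slightly longer; the paper's is more economical because it reuses \eqref{eq_series} as a black box.
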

\begin{proof}[\proofname{} of Corollary~\ref{cor_201}]%
If $Z_1(s)=Z_2(s)=Z(s)$ then we notice that
\[
f_Z(x)=\sum_{m\geq 1}d_m\frac{1}{2i\pi}\int_{(c)}\gamma(s)(mx)^{-s}\dd s=\sum_{m\geq 1}d_m\kappa(mx),
\]
which implies 
\begin{equation*}
h_{12}(x)=\sum_{m\geq 1}d_m\left[\kappa(mx)-\epsilon x^{-1}\kappa(mx^{-1})\right].
\end{equation*}
Equation \eqref{eq_series} implies that
\begin{equation*}
\sum_{\substack{\text{$\lambda$ pole of $Z$} \\
\text{of multiplicity $m_\lambda$}}}\sum_{m=1}^{m_\lambda}C_m(\lambda)\frac{(-1)^{m-1}}{(m-1)!}\log^{m-1}{(x)}x^{-\lambda} 
=\sum_{m\geq 1}d_m\left[\kappa(mx)-\varepsilon x^{-1}\kappa(mx^{-1})\right].
\end{equation*}
The corollary follows from multiplying by $\phi(x^{-1})$ and integrating over $\R_+^\times$ with respect to the measure $\dd x/x$
\end{proof}
\begin{theorem} \label{thm_303}
Let $Z_1(s)=\gamma_1(s)D_1(s)$ and $Z_2(s)=\gamma_2(s)D_2(s)$ be two elements of $\mathcal{F}$ satisfying the functional equation
\begin{equation*}
Z_1(s)=\epsilon Z_2(1-s)
\end{equation*}
for some complex number $\epsilon$ of absolute value one. Let us suppose that $Z_1(s)$ and $Z_2(s)$ can be written as
\begin{equation} \label{320}
Z_i(s)=\frac{U_i(s)}{V_i(s)}
\end{equation}
for $i=1,2$ where $U_i(s)$ and $V_i(s)$ ($i=1,2$) are some entire functions satisfying the functional equations
\begin{align}
U_1(s) & =  \varepsilon_U U_2(1-s), \label{321} \\
V_1(s) & =  \varepsilon_V V_2(1-s) \label{322}
\end{align}
for some complex numbers $\epsilon_U, \varepsilon_V$ of absolute value one, and satisfying the bounds
\begin{equation}\label{323}
\abs{ U_i(\sigma+it)}, \abs{ V_i(\sigma+it)}=O_{a,b}\left(\abs{ t}^{-(1+\delta)}\right) \quad (i=1,2)
\end{equation}
for some $\delta>0$ in every  vertical strip of finite width $a\leq\sigma\leq b$ and every  $\abs{t}\geq 1$. Under the previous assumptions, the functions $h_{12}(x)$ and $h_{21}(x)$, which are defined in \eqref{316}, satisfy
\begin{equation*}
v_1\ast_\times h_{12} =0, \quad
v_2\ast_\times h_{21} =0.
\end{equation*}
where $v_i\in\mathbf{S}(\R_+^\times)$ is the inverse Mellin transform of $V_i$ for $i=1,2$.
In other words, the functions $h_{12}(x)$ and $h_{21}(x)$ are $\mathcal{C}_{\rm poly}^\infty({\R_+^\times})$-mean-periodic and $\mathbf{S}(\R_+^\times)^*$-mean-periodic.
\end{theorem}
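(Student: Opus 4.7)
The plan is to pass to Mellin transforms and read off the vanishing of $v_1 \ast_\times h_{12}$ from the factorisation $U_1 = V_1 Z_1$ together with the functional equation. Concretely, introduce
\begin{equation*}
u_i(x) \coloneqq \frac{1}{2i\pi}\int_{(c)} U_i(s)\, x^{-s}\, \dd s, \qquad
v_i(x) \coloneqq \frac{1}{2i\pi}\int_{(c)} V_i(s)\, x^{-s}\, \dd s,
\end{equation*}
which are well defined for every $c \in \R$ by \eqref{323} and the entireness of $U_i, V_i$. Shifting contours arbitrarily to the right and left shows that $u_i$ and $v_i$ decay faster than any power of $x$ both as $x \to 0^+$ and as $x \to +\infty$; together with a differentiation-under-the-integral argument (see the obstacle below), this makes each $v_i$ a nontrivial element of $\mathbf{S}(\R_+^\times)$, and hence also of $\mathcal{C}_{\rm poly}^\infty(\R_+^\times)^\ast$ via the inclusions recorded in Section~\ref{sec_mp}.

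The heart of the argument is the pair of identities
\begin{equation*}
v_1 \ast_\times f_{Z_1} = u_1, \qquad v_1 \ast_\times \widetilde{f}_{Z_2} = \varepsilon^{-1} u_1, \quad\text{where}\quad \widetilde{f}_{Z_2}(x) \coloneqq x^{-1} f_{Z_2}(x^{-1}).
\end{equation*}
The first follows from $\Me(v_1 \ast_\times f_{Z_1})(s) = V_1(s)\, Z_1(s) = U_1(s)$ on a common strip of absolute convergence, combined with Mellin inversion. For the second, the change of variable $y = x^{-1}$ gives $\Me(\widetilde{f}_{Z_2})(s) = Z_2(1-s)$, and the functional equation \eqref{311} then yields $\Me(v_1 \ast_\times \widetilde{f}_{Z_2})(s) = V_1(s)\, Z_2(1-s) = \varepsilon^{-1} V_1(s)\, Z_1(s) = \varepsilon^{-1} U_1(s)$. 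Subtracting,
\begin{equation*}
v_1 \ast_\times h_{12} = v_1 \ast_\times f_{Z_1} - \varepsilon\, v_1 \ast_\times \widetilde{f}_{Z_2} = u_1 - \varepsilon \cdot \varepsilon^{-1} u_1 = 0,
\end{equation*}
and the analogous computation with the indices $1$ and $2$ swapped (using $V_2$ in place of $V_1$ and the dual form of the functional equation $Z_1(1-s) = \varepsilon Z_2(s)$) gives $v_2 \ast_\times h_{21} = 0$.

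To conclude the two mean-periodicity statements, commutativity of $\ast_\times$ rewrites the equalities as $h_{ij} \ast_\times v_i = 0$, and $v_i$ is a nontrivial element of both $\mathcal{C}_{\rm poly}^\infty(\R_+^\times)^\ast$ and $(\mathbf{S}(\R_+^\times)^\ast)^\ast = \mathbf{S}(\R_+^\times)$. By Theorem~\ref{thm_302} the functions $h_{12}$ and $h_{21}$ belong to $\mathcal{C}_{\rm poly}^\infty(\R_+^\times)$, and hence also to $\mathbf{S}(\R_+^\times)^\ast$. Applying the definition of $\mathfrak{X}$-mean-periodicity with $\mathfrak{X} = \mathcal{C}_{\rm poly}^\infty(\R_+^\times)$ and $\mathfrak{X} = \mathbf{S}(\R_+^\times)^\ast$ in turn then finishes the proof.

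The main obstacle is the verification that $v_i$ actually lies in $\mathbf{S}(\R_+^\times)$: the bound \eqref{323} is only strong enough to guarantee absolute convergence of the defining inverse Mellin integral, which yields continuity and rapid decay, but differentiating under the integral sign requires decay faster than $|t|^{-(1+\delta)}$. The way round this is to couple \eqref{323} with the entireness of $U_i, V_i$ via the Phragm\'en--Lindel\"of principle to improve the decay on every vertical line, or, in the cases relevant to number theory where $V_i$ is a product of $\Gamma$-factors, to exploit their genuine exponential decay in vertical strips. A secondary, more routine technical point is to locate a strip on which the Mellin convolutions $v_1 \ast_\times f_{Z_1}$ and $v_1 \ast_\times \widetilde{f}_{Z_2}$ converge absolutely and on which the multiplicativity of $\Me$ can be invoked before analytic continuation extends the identities everywhere.
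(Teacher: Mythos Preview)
Your proposal is correct and follows essentially the same route as the paper: define $u_i,v_i$ as inverse Mellin transforms of $U_i,V_i$, argue via contour shifting that they lie in $\mathbf{S}(\R_+^\times)$, and then read off $v_1\ast_\times h_{12}=0$ from the Mellin-side identity $V_1(s)Z_1(s)=U_1(s)$ combined with the functional equation. The only cosmetic difference is that the paper uses the separate functional equations \eqref{321}--\eqref{322} for $U$ and $V$ (passing through $\widetilde{v_2}$ and the relation $\varepsilon=\varepsilon_U\varepsilon_V^{-1}$), whereas you invoke the functional equation for $Z$ directly; your observation about the smoothness of $v_i$ is a fair point that the paper handles by assertion.
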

\begin{remark}
An equivalent statement of the previous theorem is that the functions $H_{12}(t)=h_{12}(e^{-t})$ and $H_{21}(t)=h_{21}(e^{-t})$ are $\mathbf{S}(\R)^*$-mean-periodic and $\mathcal{C}_{\rm exp}^\infty(\R)$-mean-periodic.
\end{remark}
\begin{remark}
In general, $\mathcal{C}_{\rm poly}^\infty({\R_+^\times})$-mean-periodicity  
(or $\mathbf{S}(\R_+^\times)^*$-mean-periodicity) of $h_{12}(x)$ and $h_{21}(x)$  
do not imply that $Z_i(s)~(i=1,2)$ are meromorphic functions of order one. 
For example, let $\zeta_{\Gamma}(s)$ be the Selberg zeta function 
associated to a discrete co-compact torsion free subgroup $\Gamma$ of ${\rm SL}_2({\R})$. 
Then $\zeta_{\Gamma}(s)$ is an entire function of order two which has the simple zero at $s=1$,  
order $2g-1$ zero at $s=0$ and has the functional equation
\[
(\Gamma_2(s)\Gamma_2(s+1))^{2g-2} \zeta_{\Gamma}(s) = (\Gamma_2(1-s)\Gamma_2(2-s))^{2g-2} \zeta_{\Gamma}(1-s),
\]
where $g>1$ is the genus of $\Gamma \backslash {\rm SL}_2(\R) \slash {\rm SO}(2)$ 
and $\Gamma_2(s)$ is the double gamma function (\cite{MR0088511} \cite{MR0439755}, \cite{Kuro}). 
Put $\gamma(s)=(\Gamma_2(s)\Gamma_2(s+1))^{2g-2}$, $D(s)=(s(s-1))^{-2}\zeta_{\Gamma}(s)$ 
and $Z_\Gamma(s)=\gamma(s)D(s)$. 
Then we find that $Z_\Gamma(s)$ belongs to $\mathcal{F}$ and its poles are simple poles $s=0,1$ only. 
The function $h_\Gamma(x)$ attached to $Z_{\Gamma}$ is equal to $c_0+c_1x^{-1}$ for some real numbers $c_0,c_1$, 
where $h_\Gamma(x)=f_{Z_\Gamma}(x)-x^{-1}f_{Z_\Gamma}(x^{-1})$ with the inverse Mellin transform $f_{Z_\Gamma}$ of $Z_\Gamma$. 
Hence $h_\Gamma$ is $\mathcal{C}_{\rm poly}^\infty({\R_+^\times})$-mean-periodic. 
Moreover $h_\Gamma$ is $\mathcal{C}({\R_+^\times})$-mean-periodic. 
The Mellin--Carleman transform of $h_\Gamma$ is the rational function $c_1(s-1)^{-1}-c_0s^{-1}$. 
However, as mentioned above, $Z_\Gamma$ is a meromorphic function of order two. 
\end{remark}
\begin{proof}[\proofname{} of Theorem~\ref{thm_303}]%
We only prove the result for $h_{12}$. Let $u_i$ be the inverse Mellin transform of $U_i$ namely
\begin{equation*}
u_i(x)=\frac{1}{2i\pi}\int_{(c)}U_i(s)x^{-s}\dd s
\end{equation*}
and $v_i$ be the inverse Mellin transform of $V_i$ namely
\begin{equation*}
v_i(x)=\frac{1}{2i\pi}\int_{(c)}V_i(s)x^{-s}\dd s
\end{equation*}
for $i=1,2$. These integrals converge for every  real number $c$ according to \eqref{323}. In addition, these functions belong to $\mathbf{S}({\R}_+^\times)$ by shifting the contours to the right or to the left. Let us define $\widetilde{f}(x)\coloneqq x^{-1}f(x^{-1})$. We remark that
\begin{equation}\label{eq_stepA}
u_1=v_1\ast_\times f_{Z_1}=\varepsilon_U\widetilde{v_2}\ast_\times\widetilde{f_{Z_2}}
\end{equation}
since $U_1(s)=V_1(s)Z_1(s)$ and $U_1(s)=\epsilon_UU_2(1-s)=\varepsilon_U V_2(1-s)Z_2(1-s)$ according to the functional equation \eqref{321}. In addition, $v_1=\varepsilon_V\widetilde{v}_2$ by the functional equation \eqref{322}. As a consequence,
\begin{equation}\label{eq_stepB}
\varepsilon_U\widetilde{v}_2=\varepsilon v_1
\end{equation}
since $\varepsilon=\varepsilon_U {\varepsilon_V}^{-1}$. Equations \eqref{eq_stepA} and \eqref{eq_stepB} altogether imply
\[
v_1\ast_\times (f_{Z_1}-\varepsilon\widetilde{f_{Z_2}})=0
\]
which is the desired result since $\mathbf{S}({\R_+^\times}) ~\prec~ \mathcal{C}_{\rm poly}^\infty({\R_+^\times})^\ast$.
\end{proof}
%...............................................................
\subsection{On single sign property for these mean-periodic functions}%
\begin{proposition}\label{mono}
%Let $Z(s)=\gamma(s)D(s)$ be an element of $\mathcal{F}$ and $\varepsilon=\pm 1$. 
%Let $h(x)=h_{12}(x)=h_{21}(x)$ be the function defined in \eqref{316}. 
%If $h(x)$ can be split into $h(x)=h_{0}(x)+h_{1}(x)$, 
%where
Suppose that a function $h(x)$ satisfies the following conditions 
\begin{itemize}
\item 
$h(x)$ is a $\mathcal{C}_{\rm poly}^\infty({\R_+^\times})$-mean-periodic real-valued function 
or a $\mathbf{S}(\R_+^\times)^*$-mean-periodic real-valued function on $\R_+^\times$ satisfying
$h(x^{-1})=-\epsilon x h(x)$, 
\item
there exists $t_0>0$ such that $h(e^{-t})$ is of constant sign on $(t_0,+\infty)$,
\item
the Mellin--Carleman transform $\MeCa\left(h\right)(s)$ of $h(x)$ has no poles in $(1/2+\delta,+\infty)$ for some $0\leq\delta<w$ where $w$ is the positive real number in Definition \ref{def_E}. 
\end{itemize}
Then all the poles of $\;\MeCa\left(h\right)(s)$ belong to the strip $\abs{\Re(s)-1/2}\leq\delta$. In particular, if $\;\MeCa\left(h\right)(s)$ does not have any poles on $(1/2,+\infty)$ namely $\delta=0$ then all the poles of $\;\MeCa(h)(s)$ are on the line $\Re{(s)}=1/2$.
\end{proposition}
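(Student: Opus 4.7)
The plan is to reduce the proposition to proving holomorphy of $\MeCa(h)(s)$ on the right half-plane $\Re(s) > 1/2 + \delta$; the full strip $\abs{\Re(s) - 1/2} \leq \delta$ will then emerge automatically from the $s \mapsto 1-s$ symmetry of the Mellin--Carleman transform. The key analytic input behind this reduction is a classical Landau-type theorem, which is what makes the single-sign hypothesis on $h(e^{-t})$ at infinity crucial.

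First I would establish the functional equation for the Mellin--Carleman transform. The identity $h(x^{-1}) = -\epsilon x h(x)$ (with $\epsilon = \pm 1$) means that $h$ can be written as $h_{f,\epsilon}$ in the sense of Subsection~\ref{subsec_func}, by setting $f$ to be the restriction of $h$ to $(0,1]$ extended by zero to $(1,+\infty)$: the equality $h = h_{f,\epsilon}$ is immediate on $(0,1]$ and follows on $(1,+\infty)$ from the functional equation. The function $f$ is real-valued, of rapid decay as $x \to +\infty$ (it vanishes there) and of polynomial order as $x \to 0^+$ by the assumption $h \in \mathcal{C}_{\rm poly}^\infty({\R_+^\times})$ or $h \in \mathbf{S}({\R_+^\times})^*$. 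Since $h = h_{f,\epsilon}$ is mean-periodic by hypothesis, Theorem~\ref{thm_rmp} applies and gives
\begin{equation*}
\MeCa(h)(s) = \epsilon\, \MeCa(h)(1-s).
\end{equation*}
In particular the set of poles of $\MeCa(h)$ is stable under $s \mapsto 1-s$, so it is enough to prove that $\MeCa(h)$ has no poles in the open half-plane $\Re(s) > 1/2 + \delta$.

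For the latter I would apply Landau's theorem. By Proposition~\ref{prop_MC2}, if the Laplace integral
\begin{equation*}
\int_0^1 h(x) x^s \frac{\dd x}{x} = \int_0^{+\infty} H(t) e^{-st} \dd t, \qquad H(t) := h(e^{-t}),
\end{equation*}
converges absolutely on a half-plane $\Re(s) > \sigma$, then $\MeCa(h)$ coincides there with this integral and is therefore holomorphic. Split the integral as $\int_0^{t_0} + \int_{t_0}^{+\infty}$. The head is entire in $s$ since $H$ is continuous on $[0, t_0]$. On the tail, $H$ has constant sign by hypothesis, so after possibly replacing $h$ by $-h$ one may assume $H \geq 0$ on $(t_0, +\infty)$; the polynomial or tempered bound on $h$ at infinity guarantees that $\int_{t_0}^{+\infty} H(t) e^{-st} \dd t$ converges absolutely for $\Re(s)$ large enough, so its abscissa of convergence $\sigma_c$ is finite. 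Landau's classical theorem for Laplace transforms of non-negative integrands then says that $\sigma_c$ is a genuine real singularity of the meromorphic continuation of this tail integral. Since this continuation differs from $\MeCa(h)$ only by an entire function, $\sigma_c$ must be a real pole of $\MeCa(h)$. The hypothesis that $\MeCa(h)$ has no poles in $(1/2 + \delta, +\infty)$ therefore forces $\sigma_c \leq 1/2 + \delta$, which provides the required absolute convergence on $\Re(s) > 1/2 + \delta$ and completes the argument.

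The main obstacle is making Landau's theorem fit the present mean-periodic setting: one needs both that the tail Laplace integral converges somewhere (so that $\sigma_c$ is well-defined and finite) and that its meromorphic continuation agrees with $\MeCa(h)$ modulo an entire function. The first point comes from the growth bound built into $\mathcal{C}_{\rm poly}^\infty({\R_+^\times})$ or $\mathbf{S}({\R_+^\times})^*$, while the second is supplied by Proposition~\ref{proposition_MC}, which identifies $\MeCa(h)$ as the meromorphic continuation of the Mellin integral on any half-plane where the latter converges. With these two facts in place the Landau step goes through and the proposition follows.
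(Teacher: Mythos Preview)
Your proposal is correct and follows essentially the same approach as the paper: both combine Landau's theorem on Laplace transforms of eventually single-signed functions (stated in the paper as Lemma~\ref{lem_Laplace}, from Widder) with the functional equation $\MeCa(h)(s)=\epsilon\,\MeCa(h)(1-s)$ to reduce the problem to excluding poles on the half-line $(1/2+\delta,+\infty)$. The only difference is cosmetic: you derive the functional equation by writing $h=h_{f,\epsilon}$ for a suitable $f$ and invoking Theorem~\ref{thm_rmp}, whereas the paper obtains it directly from Proposition~\ref{lemma_mpC} (applied to $\widetilde{h}(x)=\sqrt{x}\,h(x)$, as in Section~\ref{sec_mbp0}).
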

Proposition \ref{mono} is a consequence of the following lemma (\cite[Chapter II, Section 5]{MR0005923}) since
\begin{equation*}
\MeCa(h)(s)=\int_0^1h(x)x^{s}\frac{\dd x}{x}=\int_0^{+\infty}h(e^{-t})e^{-st}\dd t
\end{equation*}
on $\Re{(s)}>1/2+w$, and $\MeCa(h)(s)= \varepsilon \MeCa(h)(s)$ 
by the first assumption and Proposition \ref{lemma_mpC}. 

Applications of Proposition \ref{mono} can be seen in Remark~\ref{rem511},
the proof of Proposition~\ref{prop_502} 
and the proof of Proposition ~\ref{prop_503}. 
%%%%%

\begin{lemma}\label{lem_Laplace}
Let $f(x)$ be a real-valued function on $\R_+^\times$. If there exists $x_0>0$ such that $f(x)$ is of constant sign on $(x_0,+\infty)$ and if the abscissa $\sigma_c$ of convergence (not of absolute convergence) of the Laplace transform $\La(f)(s)$ of $f(x)$ is finite then $\La(f)(s)$ has a singularity at $s=\sigma_c$.
\end{lemma}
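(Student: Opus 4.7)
The plan is a classical Landau-type argument by contradiction: assume $\La(f)(s)$ extends holomorphically through $s=\sigma_c$, Taylor-expand at a point $\sigma_1$ slightly to the right of $\sigma_c$, and use the eventual sign condition on $f$ via Tonelli to swap sum and integral, thereby exhibiting convergence of the defining integral at some point $s_0<\sigma_c$, which contradicts the definition of $\sigma_c$.

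Without loss of generality, assume $f(t)\geq 0$ on $(x_0,+\infty)$ (otherwise replace $f$ by $-f$, which does not affect the singularities of $\La(f)$). Split $f=f_1+f_2$ with $f_1=f\cdot\mathbf{1}_{(0,x_0]}$ and $f_2=f\cdot\mathbf{1}_{(x_0,+\infty)}$, so that $f_2\geq 0$ on $\R_+^\times$. Since $\La(f)$ converges somewhere, $f$ is integrable on the bounded interval $(0,x_0]$, and hence $\La(f_1)(s)=\int_0^{x_0}f(t)e^{-st}\dd t$ extends to an entire function of $s$ by differentiation under the integral sign on a bounded $t$-interval. Therefore $\La(f)$ and $\La(f_2)$ have the same set of singularities and the same abscissa of convergence $\sigma_c$, so it suffices to prove the result for $f_2$.

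Suppose for contradiction that $\La(f_2)$ is holomorphic on an open disk $|s-\sigma_c|<r$. Choose $\sigma_1\in(\sigma_c,\sigma_c+r/2)$; combined with holomorphy on $\Re(s)>\sigma_c$, this implies $\La(f_2)$ is holomorphic on a disk $|s-\sigma_1|<R$ with $R>\sigma_1-\sigma_c$. For $\Re(s)>\sigma_c$, differentiation under the integral sign yields
\begin{equation*}
(-1)^n\La(f_2)^{(n)}(\sigma_1)=\int_0^{+\infty}t^n f_2(t)e^{-\sigma_1 t}\dd t\geq 0\qquad(n\in\Z_+),
\end{equation*}
since $f_2\geq 0$. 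Picking $s_0\in(\sigma_1-R,\sigma_c)$, the Taylor expansion of $\La(f_2)$ at $\sigma_1$ converges at $s_0$, giving
\begin{equation*}
\La(f_2)(s_0)=\sum_{n\geq 0}\frac{(\sigma_1-s_0)^n}{n!}\int_0^{+\infty}t^n f_2(t)e^{-\sigma_1 t}\dd t,
\end{equation*}
where each term is non-negative. Tonelli's theorem then swaps sum and integral and the right side collapses to $\int_0^{+\infty}f_2(t)e^{-s_0 t}\dd t<+\infty$. This exhibits convergence of $\La(f_2)$ at $s_0<\sigma_c$, contradicting the definition of $\sigma_c$ as the abscissa of convergence of $\La(f_2)$.

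The main technical nuance lies in the first reduction: showing that $\La(f_1)$ is entire requires local absolute integrability of $f$ near $0$, a mild regularity hypothesis slightly beyond mere convergence of $\La(f)$ at one point, but harmless in the applications in the paper. The heart of the argument, namely the Tonelli swap, is entirely painless once $f$ has been replaced by its non-negative tail $f_2$.
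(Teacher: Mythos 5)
Your proof is correct: it is the classical Landau--Widder argument (eventual positivity, Taylor expansion at a point to the right of $\sigma_c$ with nonnegative coefficients, Tonelli to push the half-plane of convergence past $\sigma_c$), which is precisely the proof in the reference \cite[Chapter II, Section 5]{MR0005923} that the paper cites in lieu of giving its own proof. The reduction to the tail $f_2$ and the remark on local integrability near $0$ are handled appropriately, so nothing further is needed.
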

%--------------------------------------------------------------------------------------
\section{Zeta functions of arithmetic schemes and mean-periodicity}\label{sec_zeta}%
The main references for the arithmetic and analytic objects briefly introduced in this section are \cite{MR0194396} and \cite{Se}.
%%!
\subsection{Background on  zeta functions of schemes}%
Let $S$ be a scheme of dimension $n$. Its (Hasse) \emph{ zeta function}
is the Euler product
\begin{equation*}
\zeta_S(s)=\prod_{x\in S_0}(1-\abs{k(x)}^{-s})^{-1}
\end{equation*}
whose Euler factors correspond to all closed points $x$ of $S$, say $x\in S_0$, with residue field of cardinality $\abs{k(x)}$.
It is known to converge absolutely in $\Re(s)>n$.
If $S$ is a $B$-scheme
then the zeta function $\zeta_S(s)$ is the product of
the zeta functions $\zeta_{S_b}(s)$ where $S_b$ runs through  all fibres
of $S$ over $B$. 
\newline\newline
Let $\K$ be a number field. Let $E$ be an elliptic curve  over $\K$. 
Define the (Hasse--Weil) zeta function $\zeta_E(s)$ of $E$
as the product of factors for each valuation of $\K$, 
the factors are the Hasse zeta function of a minimal Weierstrass equation 
of $E$ with respect to the valuation. 
If $E$ has a global minimal Weierstrass equation 
(for example, this is so if the class number of $\K$ is 1), then 
the Hasse--Weil zeta function $\zeta_E(s)$ equals the zeta function
$\zeta_{{\mathcal E}_0}$ of the model ${\mathcal E}_0$ corresponding to a 
global minimal Weierstrass equation for $E$.
All this easily follows from the description of the special fibre of a minimal 
Weierstrass equation, see e.g. \cite[10.2.1]{Li}. 
The Hasse--Weil zeta function $\zeta_E(s)$ depends on $E$ only. 
\newline
The factor of the  zeta function for each valuation of $\K$
is  the zeta function of appropriate curve (almost always  elliptic curve) over the residue field
of the valuation. It can naturally be written as the quotient whose numerator is the zeta function of the projective line over the residue field. Following Hasse, take the product over all $v$ to get 
\begin{equation}
\zeta_E(s)=\frac{\zeta_\K(s)\zeta_\K(s-1)}{L(E,s)}
\end{equation}
on $\Re{(s)}>2$ where $\zeta_\K(s)$ is the Dedekind zeta function of $\K$, 
and which defines $L(E,s)$, the $L$-function of $E$. 
\newline\newline
Let $\mathcal{E}$ be a regular model of $E$, proper over the ring of integers of $\K$. 
The description of geometry of models
in \cite[Thms 3.7, 4.35 in Ch. 9 and Section 10.2.1 in Ch. 10]{Li}
immediately implies that
\begin{equation}\label{701}
\zeta_\mathcal{E}(s)=n_\mathcal{E}(s)\zeta_E(s)
\end{equation}
where $n_{\mathcal{E}}(s)$ is the product of zeta functions of affine lines
over finite extension $k(b_j)$ of the residue fields $k(b)$:
\begin{equation}\label{702}
n_\mathcal{E}(s)=\prod_{1\leq j\leq J}\left(1-q_j^{1-s}\right)^{-1}
\end{equation}
and  $q_j=\abs{k(b_j)}$ ($1\leq j\leq J$), $J$ is the number of singular fibres of $\mathcal{E}$.
See  \cite[Section 7.3]{Fe3} and also \cite[Section 1]{MR899399}.
Note that $n_\mathcal{E}(s)^{\pm 1}$ are holomorphic functions on $\Re{(s)}>1$.
\newline\newline
In the next sections we look at the $L$-functions of elliptic curves, Hasse-Weil zeta functions of elliptic curves and zeta functions of regular models of elliptic curves, and then at the  zeta functions of arithmetic schemes.
%
%................................................................................................
%
\subsection{Conjectural analytic properties of $L$-functions of elliptic curves}%
Let $\K$ be a number field. Let $E$ be an elliptic curve over $\K$, denote its conductor by  $\mathfrak{q}_E$. The $L$-function $L(E,s)$ has an absolutely convergent Euler product and Dirichlet series on $\Re{(s)}>3/2$, say
\begin{equation}\label{eq_LE_Dir}
L(E,s)\coloneqq \sum_{n\geq 1}\frac{a_n}{n^{s}}.
\end{equation}
The \textit{completed} "$L$-function" $\Lambda(E,s)$ of $E$ is defined by
\begin{equation*}
\Lambda(E,s)\coloneqq \left(\text{N}_{\K\vert\Q}(\mathfrak{q}_E)\abs{ d_\K}^2\right)^{s/2}L_\infty(E,s)L(E,s)
\end{equation*}
where $d_\K$ is the discriminant of $\K$ and
\begin{equation*}
L_\infty(E,s)\coloneqq \fGamma_\C(s)^{r_1}\fGamma_\C(s)^{2r_2}.
\end{equation*}
Here, $r_1$ is the number of real archimedean places of $\K$, $r_2$ is the number of conjugate pairs of complex archimedean places of $\K$, and $\fGamma_\C(s)\coloneqq (2\pi)^{-s}\fGamma(s)$ as usual (see \cite[Section 3.3]{Se}). The expected analytic properties of $\Lambda(E,s)$ are encapsulated in the following hypothesis.
\begin{hypothesisE} (respectively $\NiceWE(\K)$)
If $E$ is an elliptic curve over the number field $\K$ then the function $\Lambda(E,s)$ is a \emph{completed $L$-function} (respectively \emph{almost completed $L$-function}) in the sense that it satisfies the following \emph{nice} analytic properties:
\begin{itemize}
\item
it can be extended to a holomorphic function (respectively meromorphic function with finitely many poles) of order $1$ on $\C$,
\item
it satisfies a functional equation of the shape
\[%
\Lambda(E,s)=\omega_E\Lambda(E,2-s)
\]
for some sign $\omega_E=\pm 1$.
\end{itemize}
\end{hypothesisE}
\begin{remark}
If $E$ is an elliptic curve over a general number field $\K$ with complex multiplication then its completed $L$-function is nice by the work of Deuring. If  $\K=\Q$ then Hypothesis $\NiceE(\Q)$ is implied by the theorem of Wiles and others that an elliptic curve over the field of rational numbers is \textit{modular}. More generally, extensions of the modularity lifting
property is expected to give meromorphic continuation and functional equation
for $L$-functions of elliptic curves over totally real fields. 
However, this method cannot  handle elliptic curves over arbitrary number fields. 
\end{remark}
\begin{remark}
Assuming Hypothesis $\NiceE(\K)$, the $L$-function $L(E,s)$ of every  elliptic curve $E$ over $\K$ satisfies the \emph{convexity bounds}
\begin{equation}\label{eq_conv_L_E}
L(E,s)\ll_{E,\epsilon}\left[\abs{\Im{(s)}}^{r_1+2r_2}\right]^{\mu_E\left(\Re{(s)}\right)+\epsilon}
\end{equation}
for every $\epsilon>0$ where
\begin{equation*}
\mu_E(\sigma)=\begin{cases}
0 & \text{if $\sigma\geq 3/2$,} \\
-\sigma+3/2 & \text{if $1/2\leq\sigma\leq 3/2$,} \\
2(1-\sigma) & \text{otherwise.}
\end{cases}
\end{equation*}
 (see \cite[Equation (5.21)]{IK}). 
Note that even if Hypothesis $\NiceE(\K)$ is relaxed to Hypothesis $\NiceWE(\K)$ then $L(E,s)$ is still polynomially bounded (see Section 5.5). 
\end{remark}
One of the purposes of this section is to establish a strong link between Hypothesis $\NiceE(\K)$ and mean-periodicity. This will be achieved by investigating analytic properties of (Hasse) \emph{zeta functions} of elliptic curves, in agreement with the philosophy of \cite{Fe2}.
\subsection{Hasse--Weil zeta functions of elliptic curves}%
Let $\K$ be a number field. Remember that the completed Dedekind zeta function of $\K$ is given by
\begin{equation*}
\Lambda_\K(s)\coloneqq \abs{ d_\K}^{s/2}\zeta_{\K,\infty}(s)\zeta_\K(s)
\end{equation*}
where
\begin{equation}\label{eq_zeta_infinity}
\zeta_{\K,\infty}(s)\coloneqq \fGamma_\R(s)^{r_1}\fGamma_\C(s)^{r_2}
\end{equation}
with $\fGamma_\R(s)=\pi^{-s/2}\fGamma(s/2)$ as usual and $\fGamma_\C(s)$ has already been defined in the previous section. $\Lambda_\K(s)$ is a meromorphic function of order $1$ on $\C$ with simple poles at $s=0,1$, which satisfies the functional equation
\begin{equation*}
\Lambda_\K(s)=\Lambda_\K(1-s).
\end{equation*}
Moreover, $\zeta_\K(s)$ satisfies the \emph{convexity bounds}
\begin{equation}\label{eq_conv_Zeta}
\zeta_\K(s)\ll_{\K,\epsilon}\left[\abs{\Im{(s)}}^{r_1+2r_2}\right]^{\mu_\K\left(\Re{(s)}\right)+\epsilon}
\end{equation}
for all $\epsilon>0$ where
\begin{equation*}
\mu_\K(\sigma)=\begin{cases}
0 & \text{if $\sigma\geq 1$,} \\
(-\sigma+1)/2 & \text{if $0\leq\sigma\leq 1$,} \\
1/2-\sigma & \text{otherwise.}
\end{cases}
\end{equation*} 
(see \cite[Equation (5.21)]{IK}). Let $E$ be an elliptic curve over $\K$. The Hasse--Weil zeta function of $E$ may be written in terms of the completed zeta-functions and $L$-function as follows
\begin{equation}\label{eq_Ded2}
\zeta_E(s)=\abs{ d_\K}^{1/2}\text{N}_{\K\vert\Q}(\mathfrak{q}_E)^{s/2}\frac{L_\infty(E,s)}{\zeta_{\K,\infty}(s)\zeta_{\K,\infty}(s-1)}\frac{\Lambda_\K(s)\Lambda_\K(s-1)}{\Lambda(E,s)}.
\end{equation}
The functional equation  $\fGamma(s+1)=s\fGamma(s)$ and the equality $2\fGamma_\C(s)=\fGamma_\R(s)\fGamma_\R(s+1)$, which is implied by Legendre's duplication formula, lead to
\begin{equation} \label{feq_HW} %
\zeta_E(s)=\abs{ d_\K}^{1/2}\text{N}_{\K\vert\Q}(\mathfrak{q}_E)^{s/2}\left(\frac{s-1}{4\pi}\right)^{r_1}\left(\frac{s-1}{2\pi}\right)^{r_2}\frac{\Lambda_\K(s)\Lambda_\K(s-1)}{\Lambda(E,s)}.
\end{equation}
As a consequence, Hypothesis $\NiceE({\K})$ implies the following hypothesis.
\begin{hypothesisHW}
For every elliptic curve $E$ over the number field $\K$, the Hasse--Weil zeta function $\zeta_E(s)$ satisfies the following \emph{nice} analytic properties:
\begin{itemize}
\item
it can be extended to a meromorphic function on $\C$, %of order $1$ on $\C$,
\item
it satisfies a functional equation of the shape
\[%
\left(\text{N}_{\K\vert\Q}(\mathfrak{q}_E)^{-1}\right)^{s/2}\zeta_E(s)=\left(\text{N}_{\K\vert\Q}(\mathfrak{q}_E)^{-1}\right)^{(2-s)/2}(-1)^{r_1+r_2}\omega_E\zeta_E(2-s)
\]
for some sign $\omega_E=\pm 1$.
\end{itemize}
\end{hypothesisHW}
%%!
\begin{remark}
Note that the constants in the conjectural functional equations of the Hasse--Weil zeta functions of elliptic curves are much simpler than in the conjectural functional equations of $L$-functions of elliptic curves. In particular, they do not depend on the discriminant of the field. Also note the absence of gamma-factors in the functional equations  of the Hasse--Weil zeta functions of elliptic curves. Even the total conjectural sign in the functional equations does not depend on archimedean data associated to $\K$. The Hasse--Weil zeta functions of elliptic curves are, from several points of view, more basic objects than the $L$-functions of elliptic curves.
\end{remark}
\begin{remark}
Hypothesis $\NiceHW(\K)$ implies the meromorphic continuation of $\Lambda(E,s)$ 
and the conjectural functional equation of $\Lambda(E,s)$ 
for every elliptic curve $E$ over the number field $\K$. 
In particular, Hypothesis $\NiceHW(\K)$ recover the gamma-factor $L_\infty(E,s)$ 
and the norm of the conductor $N_{{\K}\vert{\Q}}(\mathfrak{q}_E)\abs{d_{\K}}^2$ of $L(E,s)$ 
by $\eqref{feq_HW}$ and the one-dimensional study of $\zeta_{\K}(s)$.   
\end{remark}
\begin{remark}
The automorphic property
of the $L$-function does not transfer to any automorphic property of the whole ratio,
the Hasse--Weil zeta function of elliptic curve. 
It is then natural to wonder which replacement of automorphic property should correspond to Hasse--Weil zeta functions of elliptic curves. This work shows  mean-periodicity is one of such replacements.
\end{remark}
Here we state the following hypothesis on mean-periodicity. 
\begin{hypothesisMPHW} For every  elliptic curve $E$ over $\K$, if 
\begin{equation*}
Z_E(s)=\Lambda_\K(s)\left(\text{N}_{\K\vert\Q}(\mathfrak{q}_E)^{-1}\right)^{2s/2}\zeta_E(2s)
\end{equation*}
then the function
\begin{equation*}
h_E(x)\coloneqq f_{Z_E}(x)-(-1)^{r_1+r_2}\omega_Ex^{-1}f_{Z_E}\left(x^{-1}\right)
\end{equation*}
is $\mathcal{C}_{\rm poly}^\infty({\R_+^\times})$-mean-periodic (respectively $\mathbf{S}(\R_+^\times)^*$-mean-periodic), 
or the function $H_E(t)\coloneqq h_E(e^{-t})$ is $\mathcal{C}_{\rm exp}^\infty({\R})$-mean-periodic (respectively $\mathbf{S}(\R)^*$-mean-periodic), 
where $f_{Z_E}$ is the inverse Mellin transform of $Z_E$ defined in \eqref{315}. 
\end{hypothesisMPHW}
Clearly,  mean-periodicity for $h_E(x)$ and mean-periodicity for $H_E(t)$ are equivalent. One link with mean-periodicity is described in the following theorem. 
\begin{theorem}\label{prop_HW1}
Let $\K$ be a number field.  Then 
\begin{itemize}
\item
Hypothesis $\NiceWE(\K)$ implies Hypothesis $\MPHW(\K)$. 
\item  
Hypothesis $\MPHW(\K)$ implies Hypothesis $\NiceHW({\K})$. 
\end{itemize}
\end{theorem}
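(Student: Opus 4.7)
The plan is to reduce both directions to the general machinery of Section \ref{sec_results} applied to $Z_E$ itself. Using \eqref{feq_HW} with $s\mapsto 2s$ together with $\Lambda_\K(s)=\Lambda_\K(1-s)$ one rewrites
\begin{equation*}
Z_E(s)=|d_\K|^{1/2}\Lambda_\K(s)\left(\frac{2s-1}{4\pi}\right)^{r_1}\left(\frac{2s-1}{2\pi}\right)^{r_2}\frac{\Lambda_\K(2s)\Lambda_\K(2s-1)}{\Lambda(E,2s)},
\end{equation*}
and, under Hypothesis $\NiceWE(\K)$, a short computation based on $\Lambda(E,2-2s)=\omega_E\Lambda(E,2s)$ yields the functional equation $Z_E(s)=(-1)^{r_1+r_2}\omega_E\,Z_E(1-s)$. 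With $\varepsilon=(-1)^{r_1+r_2}\omega_E$ this identifies $h_E$ with the function $h_{12}$ of \eqref{316} attached to $Z_1=Z_2=Z_E$, so the two implications become, respectively, a mean-periodicity result for this $h_{12}$ and a meromorphic continuation statement for $\Me(f_{Z_E})$.

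For the forward implication Hypothesis $\NiceWE(\K)\Rightarrow{}$Hypothesis $\MPHW(\K)$ I would first verify that $Z_E\in\mathcal{F}$ via the factorisation $Z_E(s)=\gamma(s)D(s)$ with $\gamma(s)=\zeta_{\K,\infty}(s)$: condition \eqref{313} is Stirling, \eqref{eq_AC} follows by combining the convexity bound \eqref{eq_conv_L_E} with \eqref{eq_conv_Zeta}, while the essential assumption \eqref{314} is provided by Appendix \ref{ap_analytic} (existence of a sequence $t_m\to+\infty$ along which $|L(E,\sigma\pm it_m)|^{-1}$ is polynomially bounded uniformly in the critical strip). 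Next I would feed $Z_E$ into Theorem \ref{thm_303} through the factorisation $Z_E=U/V$ with $V(s)=P_E(s)\Lambda(E,2s)$, where $P_E$ is a polynomial that both absorbs the finitely many poles of $\Lambda(E,2s)$ (finitely many by $\NiceWE$) and contains the factors $s(s-1)(2s-1)^2$ needed to make $U(s)=V(s)Z_E(s)$ entire. Since the pole set of $\Lambda(E,2s)$ is symmetric under $s\mapsto 1-s$, $P_E$ can be chosen so that $P_E(1-s)=\beta P_E(s)$ for a constant $\beta$, which combined with the functional equations of $\Lambda_\K$ and $\Lambda(E,\cdot)$ gives \eqref{321}--\eqref{322}; the bound \eqref{323} for any $\delta>0$ holds because both $U$ and $V$ retain exponentially decaying archimedean factors. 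All hypotheses of Theorem \ref{thm_303} are then satisfied, so $h_E$ is both $\mathcal{C}_{\rm poly}^\infty(\R_+^\times)$- and $\mathbf{S}(\R_+^\times)^*$-mean-periodic.

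The reverse implication Hypothesis $\MPHW(\K)\Rightarrow{}$Hypothesis $\NiceHW(\K)$ is immediate from Theorem \ref{thm_rmp} applied to $f=f_{Z_E}$ with $\varepsilon=(-1)^{r_1+r_2}\omega_E$: mean-periodicity of $h_E$ (equivalently of $H_E$) produces the meromorphic continuation of $\Me(f_{Z_E})(s)=Z_E(s)$ to $\C$ and the functional equation $Z_E(s)=\varepsilon Z_E(1-s)$. Dividing by $\Lambda_\K(s)\,\text{N}_{\K\vert\Q}(\mathfrak{q}_E)^{-s}$, which is already meromorphic with the functional equation $\Lambda_\K(s)=\Lambda_\K(1-s)$, and substituting $s\mapsto s/2$ recovers exactly Hypothesis $\NiceHW(\K)$ for $\zeta_E$. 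The main obstacle in the whole argument is the analytic input \eqref{314} (to place $Z_E$ in $\mathcal{F}$); the rest reduces to a careful bookkeeping of gamma factors and of the interacting functional equations of $\Lambda_\K$ and $\Lambda(E,\cdot)$.
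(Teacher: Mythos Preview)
Your proposal is correct and follows essentially the same route as the paper: the reverse implication via Theorem~\ref{thm_rmp}, and the forward implication by checking $Z_E\in\mathcal{F}$ (using the appendix estimate for \eqref{314}) and then applying Theorem~\ref{thm_303} to a factorisation $Z_E=U/V$ with $V$ built from $\Lambda(E,2s)$ times a suitable polynomial. The only cosmetic difference is your splitting $Z_E=\gamma\cdot D$ with $\gamma(s)=\zeta_{\K,\infty}(s)$, whereas the paper takes $\gamma(s)=\Lambda_\K(s)$ and $D(s)=\text{N}_{\K\vert\Q}(\mathfrak{q}_E)^{-s}\zeta_E(2s)$; both choices work, and in fact your justification of \eqref{eq_AC} can be simplified since for $\Re(s)>1$ the Dirichlet series for $\zeta_E(2s)$ converges absolutely (no convexity bound needed there).
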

\begin{remark}
We have already mentioned that Hypothesis $\NiceE(\Q)$ holds. 
As a consequence, the two other hypothesis are also true. 
Let us give some more information on the mean-periodic functions, 
which occur in this particular case (see Corollary \ref{cor_201}). 
If $E$ is an elliptic curve over $\Q$ and
\begin{equation}\label{eq_zeta_Dir}
\zeta_E(2s)\coloneqq \sum_{m\geq 1}\frac{c_m}{m^s}
\end{equation}
then the mean-periodic function $h_E$ satisfies
\begin{multline*}
H_E(t)=h_E(e^{-t})=\sum_{\substack{\text{$\lambda$ pole of $Z_E(s)$} \\
\text{of multiplicity $m_\lambda$}}}\sum_{m=1}^{m_\lambda}C_m(\lambda)\frac{1}{(m-1)!}t^{m-1}e^{\lambda t} \\
=2\sum_{n\geq 1}\left(\sum_{d\mid n}c_d\right)\left[\exp{\left(-\pi n^2e^{-2t}\right)}+\omega_E\exp{\left(t-\pi n^2e^{2t}\right)}\right]
\end{multline*}
where the coefficients $C_m(\lambda)$ ($1\leq m\leq m_\lambda$) are defined in \eqref{principal_part}.
\end{remark}
\begin{proof}[\proofname{} of Theorem~\ref{prop_HW1}]%
The second assertion is a consequence of Theorem \ref{thm_rmp}. 
Let us show the first assertion applying Theorem \ref{thm_302} and Theorem \ref{thm_303}. 
Adopting the same notation as  in Theorem \ref{thm_302}, we choose $Z_1(s)=Z_2(s)=\gamma(s)D(s)=Z_E(s)$ with $\gamma(s)=\Lambda_\K(s)$ and $D(s)=\left(\text{N}_{\K\vert\Q}(\mathfrak{q}_E)^{-1}\right)^{2s/2}\zeta_E(2s)$. 
The functional equation satisfied by $Z_E(s)$ is
\[
Z_E(s)=(-1)^{r_1+r_2}\omega_EZ_E(1-s).
\]
In addition, $Z_E(s)$ belongs to $\mathcal{F}$ since $\Lambda_\K(s)$ has two poles and the poles of $\gamma(s)$ and $D(s)$ are in the vertical strip $\abs{\Re{(s)}-1/2}\leq 1/2\coloneqq w$ and
\begin{itemize}
\item
the estimate \eqref{313} follows from Stirling's formula and classical convexity bounds for Dedekind zeta functions given in \eqref{eq_conv_Zeta},
\item
the estimate \eqref{eq_AC} follows from the Dirichlet series expansion of $\zeta_E(s)$ for $\Re(s)>2$, 
\item
the crucial condition \eqref{314} is an application of Proposition \ref{lem_601} and the convexity bounds for the Dedekind zeta function given in \eqref{eq_conv_Zeta}.
\end{itemize} 
From Hypothesis $\NiceWE(\K)$, the function $P(s)L(E,s)$ is an entire function for some polynomial $P(s)$ satisfying $P(s)=P(1-s)$. 
Adopting the same notation as  in Theorem \ref{thm_303}, we choose $U_1(s)=U_2(s)=U(s)$ and $V_1(s)=V_2(s)=V(s)$ where
\begin{eqnarray*}
U(s) & \coloneqq  & (4\pi)^{-r_1}(2\pi)^{-r_2}\abs{ d_\K}^{1/2}(2s-1)^{r_1+r_2+1}s^2(s-1)^2\Lambda_\K(s)\Lambda_\K(2s)\Lambda_\K(2s-1)P(2s),  \\
V(s) & \coloneqq  & (2s-1)s^2(s-1)^2 \Lambda(E,2s)P(2s).
\end{eqnarray*}
$U(s)$ and $V(s)$ are  entire functions satisfying the functional equations
\begin{eqnarray*}
U(s) & = & (-1)^{r_1+r_2+1}U(1-s), \\
V(s) & = & -\omega_EV(1-s).
\end{eqnarray*}
The estimate \eqref{323} is a consequence of Stirling's formula and convexity bounds for $P(s)L(E,s)$ and the Dedekind zeta function given in \eqref{eq_conv_L_E} and in \eqref{eq_conv_Zeta}.
\end{proof}
We get, arguing along the same lines, the following proposition.
\begin{proposition}\label{prop_HW2}
Let $\K$ be a number field and $E$ be an elliptic curve over $\K$. 
\begin{itemize}
\item
If Hypothesis $\NiceWE(\K)$ holds then the function
\begin{equation*}
h_E^{(2)}(x)\coloneqq f_{Z_E^2}(x)-x^{-1}f_{Z_E^2}\left(x^{-1}\right)
\end{equation*}
with $Z_E(s)\coloneqq \Lambda_\K(s)\left(\text{N}_{\K\vert\Q}(\mathfrak{q}_E)^{-1}\right)^{2s/2}\zeta_E(2s)$ 
is $\mathcal{C}_{\rm poly}^\infty({\R_+^\times})$-mean-periodic (respectively $\mathbf{S}(\R_+^\times)^*$-mean-periodic), 
and the function $H_E^{(2)}(t)\coloneqq h_E^{(2)}(e^{-t})$ is $\mathcal{C}_{\rm exp}^\infty({\R})$-mean-periodic (respectively $\mathbf{S}(\R)^*$-mean-periodic), 
where $f_{Z_E^2}$ is the inverse Mellin transform of $Z_E^2$ defined in \eqref{315}.
\item
If $h_E^{(2)}(x)$ is $\mathcal{C}_{\rm poly}^\infty({\R_+^\times})$-mean-periodic 
or $\mathbf{S}(\R_+^\times)^*$-mean-periodic, 
or $H_E^{(2)}(t)$ is $\mathcal{C}_{\exp}^\infty({\R})$-mean-periodic or $\mathbf{S}(\R)^*$-mean-periodic, 
then $\zeta_E^2(s)$ extends to a meromorphic function on $\C$, which satisfies the functional equation
\begin{equation*}
\left(\text{N}_{\K\vert\Q}(\mathfrak{q}_E)^{-2}\right)^{s/2}\zeta_E^2(s)=\left(\text{N}_{\K\vert\Q}(\mathfrak{q}_E)^{-2}\right)^{(2-s)/2}\zeta_E^2(2-s).
\end{equation*}
\end{itemize}
\end{proposition}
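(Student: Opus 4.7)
The plan is to argue entirely in parallel with the proof of Theorem~\ref{prop_HW1}, replacing $Z_E$ by its square $Z_E^2$ and systematically tracking what happens to signs, decay rates and pole orders. The essential simplification is that the functional equation for $Z_E^2$ has sign $+1$, since the original sign $(-1)^{r_1+r_2}\omega_E$ squares to $1$; hence the relevant boundary term $h_E^{(2)}$ is precisely the one attached to $\epsilon=+1$ in Theorem~\ref{thm_rmp}.

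For the second bullet (mean-periodicity implies meromorphic continuation and the functional equation of $\zeta_E^2$), I would apply Theorem~\ref{thm_rmp} with $f=f_{Z_E^2}$ and $\epsilon=+1$. Since $\Me(f_{Z_E^2})(s)=Z_E^2(s)=\Lambda_\K(s)^2\,\text{N}_{\K\vert\Q}(\mathfrak{q}_E)^{-2s}\,\zeta_E^2(2s)$ on the half-plane of absolute convergence, mean-periodicity of $h_E^{(2)}$ (or, equivalently, of $H_E^{(2)}$) in any of the four listed function spaces yields a meromorphic continuation of $Z_E^2(s)$ to $\C$ satisfying $Z_E^2(s)=Z_E^2(1-s)$. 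Using $\Lambda_\K(s)=\Lambda_\K(1-s)$ and substituting $u=2s$ in this functional equation unravels immediately to
\[
\bigl(\text{N}_{\K\vert\Q}(\mathfrak{q}_E)^{-2}\bigr)^{u/2}\zeta_E^2(u)=\bigl(\text{N}_{\K\vert\Q}(\mathfrak{q}_E)^{-2}\bigr)^{(2-u)/2}\zeta_E^2(2-u),
\]
which is the claimed relation for $\zeta_E^2$.

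For the first bullet, I would set $Z_1(s)=Z_2(s)=Z_E^2(s)$, $\epsilon=+1$, and verify that $Z_E^2\in\mathcal{F}$ by writing $Z_E^2(s)=\gamma(s)D(s)$ with $\gamma(s)=\Lambda_\K(s)^2$ and $D(s)=\text{N}_{\K\vert\Q}(\mathfrak{q}_E)^{-2s}\zeta_E^2(2s)$. The bound \eqref{313} follows from Stirling (doubled), \eqref{eq_AC} from the Dirichlet series for $\zeta_E^2$ in its half-plane of absolute convergence, and the location of poles is essentially the same vertical strip as for $Z_E$. The crucial estimate \eqref{314} is obtained by squaring the bound produced in the proof of Theorem~\ref{prop_HW1}: the same sequence $(t_m)$ from Proposition~\ref{lem_601} works, since squaring a polynomially bounded quantity remains polynomially bounded. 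To apply Theorem~\ref{thm_303} I would take $U(s):=U_E(s)^2$ and $V(s):=V_E(s)^2$, where $U_E,V_E$ are the entire functions built in the proof of Theorem~\ref{prop_HW1}; these squares are entire, satisfy $U(s)=U(1-s)$ and $V(s)=V(1-s)$ (both signs $(-1)^{r_1+r_2+1}$ and $-\omega_E$ become $+1$ after squaring), and inherit a decay bound of the form $O(|t|^{-(1+\delta')})$ in vertical strips of finite width by squaring \eqref{323}. Theorem~\ref{thm_302} then delivers the regularity and asymptotic properties of $h_E^{(2)}$, while Theorem~\ref{thm_303} gives the mean-periodicity in $\mathcal{C}_{\rm poly}^\infty(\R_+^\times)$ and in $\mathbf{S}(\R_+^\times)^\ast$ (and hence, via $\mathcal{H}_{\mathcal{C}^\infty_\ast}$ and $\mathcal{H}_\mathbf{S}$, of $H_E^{(2)}$ in $\mathcal{C}_{\exp}^\infty(\R)$ and $\mathbf{S}(\R)^\ast$).

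The main obstacle, as in Theorem~\ref{prop_HW1}, is the verification of condition \eqref{314} for $D(s)=\text{N}_{\K\vert\Q}(\mathfrak{q}_E)^{-2s}\zeta_E^2(2s)$ inside the critical strip, i.e.\ the existence of a sequence $(t_m)$ along which $|\zeta_E(2\sigma\pm 2it_m)|^{-2}$ (equivalently $|L(E,2\sigma\pm 2it_m)|^{2}$) is polynomially bounded uniformly on a neighbourhood of the critical line; but this reduces to the same appeal to Proposition~\ref{lem_601} used for $Z_E$, so no new analytic input is required beyond Hypothesis $\NiceWE(\K)$ itself.
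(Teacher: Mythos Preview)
Your proposal is correct and follows exactly the approach the paper intends: it explicitly says ``We get, arguing along the same lines, the following proposition'' without giving a separate proof, and your write-up is precisely what ``the same lines'' means here --- squaring $Z_E$, $U$, $V$, noting that all signs become $+1$, and reusing the same sequence $(t_m)$ from Proposition~\ref{lem_601}. The only cosmetic remark is that squaring the bound \eqref{323} actually gives $O(|t|^{-2(1+\delta)})$, so there is no need to introduce a new $\delta'$; otherwise your argument matches the paper's implicit one.
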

\begin{remark}\label{rem_info_zero}
If $E$ is an elliptic curve over $\Q$ of conductor $q_E$, which only satisfies Hypothesis $\NiceE(\Q)$, then the mean-periodic function $h_E^{(2)}$ satisfies
\begin{multline}\label{eq_explicit2}
h_E^{(2)}(e^{-t})=\sum_{\substack{\text{$\lambda$ pole of $Z_E^2(s)$} \\
\text{of multiplicity $m_\lambda$}}}\sum_{m=1}^{m_\lambda}C_m(\lambda)\frac{1}{(m-1)!}t^{m-1}e^{\lambda t} \\
=4 \sum_{n\geq 1}\left(\sum_{d\mid n}c_d\sigma_0(n/d)\right)\left[K_0(2\pi n e^{-t})-e^{t}K_0(2\pi n e^{t})\right]
\end{multline}
since
\begin{equation*}
\frac{1}{2i\pi}\int_{(c)}\Lambda_\K(s)^2x^{-s}\dd s=4\sum_{n=1}^{\infty}\sigma_0(n)K_0\left(2\pi nx\right)
\end{equation*}
where $K_0$ is the modified Bessel function, the coefficients $C_m(\lambda)$ ($1\leq m\leq m_\lambda$) are defined in \eqref{principal_part}, the coefficients $(c_m)_{m\geq 1}$ are defined in \eqref{eq_zeta_Dir} and $\sigma_0(n)=\sum_{d\mid n}1$ for   integer $n\geq 1$ as usual. In addition, the function
\begin{equation}\label{eq_v}
v(x)\coloneqq \frac{1}{2i\pi}\int_{(c)}(2s-1)^2s^4(s-1)^4\Lambda(E,2s)^2x^{-s}\dd s=\sum_{n\geq 1}a_nW\left(\frac{n^2x}{q_E^{2}}\right)
\end{equation}
belongs to $\mathbf{S}(\R_+^\times)$ and satisfies $v\ast_\times h_E^{(2)}=0$ where the coefficients $(a_n)_{n\geq 1}$ are defined in \eqref{eq_LE_Dir} and
\begin{equation*}
W(x)\coloneqq \frac{1}{2i\pi}\int_{(c)}(2s-1)^2s^4(s-1)^4\fGamma(2s)^2x^{-s}\dd s.
\end{equation*}
\end{remark}
\begin{remark}
Using the series representation in the right-hand side of \eqref{eq_explicit2} and the right-hand side of \eqref{eq_v} 
one can directly check that $h_E^{(2)}(x) \in\mathcal{C}_{\rm poly}^\infty({\R_+^\times})$, 
$v(x)\in {\mathcal{C}}^\infty({\R}_+^\times)$ and $v(x)$ is of rapid decay as $x\to+\infty$. 
If one can prove that $v \ast_\times h_E^{(2)}=0$ and that $v(x)$ is of rapid decay as $x\to 0^+$ 
then it implies the meromorphic continuation of $\zeta_E^2(s)$ and its functional equation, without using the modularity property  of $E$.
\end{remark}
\begin{remark}\label{rem511}
Let $E$ be an elliptic curve over $\Q$ and let $h_E^{(2)}(x)$ be $\mathcal{C}_{\rm poly}^\infty({\R_+^\times})$-mean-periodic. According to \eqref{eq_explicit2}, we can split $h_E^{(2)}(x)$ into
\begin{equation*}
h_E^{(2)}(x)\coloneqq h_{E,0,0}^{(2)}(x)+h_{E,0,1}^{(2)}(x)+h_{E,1}^{(2)}(x)
\end{equation*}
where
\begin{eqnarray*}
h_{E,0,0}^{(2)}(x)  & \coloneqq  & \sum_{m=1}^{4}C_m(0)\frac{(-1)^{m-1}}{(m-1)!}\log^{m-1}{(x)}, \\
h_{E,0,1}^{(2)}(x)  & \coloneqq  & \sum_{m=1}^{4}C_m(1)\frac{(-1)^{m-1}}{(m-1)!}\log^{m-1}{(x)}x^{-1}
\end{eqnarray*}
and
\begin{equation}
h_{E,1}^{(2)}(x)\coloneqq \sum_{\substack{\text{$\lambda$ pole of $Z_E^2(s)$} \\
\text{of multiplicity $m_\lambda$} \\
\lambda\neq 0,1}}\sum_{m=1}^{m_\lambda}C_m(\lambda)\frac{(-1)^{m-1}}{(m-1)!}\log^{m-1}{(x)}x^{-\lambda}.
\end{equation}
All these three functions are $\mathcal{C}_{\rm poly}^\infty({\R_+^\times})$-mean-periodic by the assumption. As a consequence, \begin{equation}
\MeCa\left(h_E^{(2)}\right)(s)=\MeCa\left(h_{E,0,0}^{(2)}\right)(s)+\MeCa\left(h_{E,0,1}^{(2)}\right)(s)+\MeCa\left(h_{E,1}^{(2)}\right)(s), 
\end{equation} 
where $\MeCa\left(h_{E,0,0}^{(2)}\right)(s)$ has only one pole of order four at $s=0$, $\MeCa\left(h_{E,0,1}^{(2)}\right)(s)$ has only one pole of order four at $s=1$ and the poles of $\MeCa\left(h_{E,1}^{(2)}\right)(s)$ are given by the non-trivial zeros of $L(E,2s)$ according to the conjectural linear independence of zeros of $L$-functions (see \cite[Page 13]{MR1954010}). Proposition \ref{mono} entails that if $h_{E,1}^{(2)}(e^{-t})$ is of constant sign\footnote{This function is said to satisfy the \emph{single sign property}.} on $(t_0,+\infty)$ for some real number $t_0$ and if ${\mathsf MC}\left(h_{E,1}^{(2)}\right)(s)$ does not have  poles on the real axis except at $s=1/2$ then all the poles of $\MeCa\left(h_{E,1}^{(2)}\right)(s)$ are on the line $\Re(s)=1/2$. 
In other words, $L(E,s)$ satisfies the Generalized Riemann Hypothesis. 

We would like to provide some evidence\footnote{For numerical computations, see \url{http://www.maths.nott.ac.uk/personal/ibf/comp.html}} for the single sign property of $h_{E,1}^{(2)}(e^{-t})$ to  hold. Set $H_E(t)\coloneqq -e^{-t}h_{E}^{(2)}(e^{-t})$,  then the  function $H_E(t)$ coincides with the function $H(t)$ defined in \cite[Section 8.1]{Fe3}.
Then the single sign property of $H_E^{\prime\prime\prime\prime}(t)$ implies the single sign property of $h_{E,1}^{(2)}(e^{-t})$. On the other hand, the single sign of $H_E^{\prime\prime\prime\prime}(t)$ holds under the Generalized Riemann Hypothesis for $L(E,s)$ if all the non trivial zeros, except $s=1$, of $L(E,s)$ are simple and $E$ is not of analytic rank $0$ (see \cite[Proposition 4]{Su1}).
\end{remark}
%%!
%
\begin{remark}\label{rem_NOT}
Finally, let us explain why  the functions $h_E(e^{-t})$ and $h_E^{(2)}(e^{-t})$ cannot be $\mathcal{C}(\R)$-mean-periodic neither $\mathcal{C}^\infty(\R)$-mean-periodic. For instance, let us assume that $H_E(t)=h_E(e^{-t})$ is $\mathcal{C}(\R)$-mean-periodic and that $\Lambda(E,s)$ is nice. We can choose a non-trivial compactly supported measure $\mu$ on $\R$ satisfying $H_E\ast\mu=0$. According to the explicit formula \eqref{eq_explicit2}, the poles of the Laplace--Carleman transform $\LaCa(H_E)(s)$ are exactly the poles of $Z_E(s)$ with multiplicities. Thus, if $\lambda\in\C\setminus\{0,1/2,1\}$ is a pole of $\LaCa(H_E)(s)$ then $\lambda$ is a non-trivial zero of $L(E,2s)$ of multiplicity $M_\lambda\geq 1$ and a non-trivial zero of $\zeta_\K(s)$ of multiplicity $n_\lambda<M_\lambda$. Let $\Lambda_E$ be the multiset (with multiplicities) of poles of $\LaCa(H_E)(s)$ (except $0$,$1/2$ and $1$ as previously) and let $\mathcal{Z}_E$ be the multiset (with multiplicities) of non-trivial zeros of $L(E,2s)$. We have just seen that $\Lambda_E\subset\mathcal{Z}_E$. On one hand, there exists a constant $C_E\neq 0$ such that
\begin{equation*}
N(R;\mathcal{Z}_E)\coloneqq \abs{\left\{\lambda\in\mathcal{Z}_E,\vert\lambda\vert\leq R\right\}}=C_ER\log{R}+O(R)
\end{equation*}
according to \cite[Theorem 5.8]{IK}. On the other hand, the set $\Lambda_E$ is a subset of the multiset (with multiplicities) of the zeros of $\DLa(\mu)(s)$ by definition of the Laplace--Carleman transform. According to \cite[Page 97]{Le}, the function $\DLa(\mu)(s)$ belongs to the Cartwright class $\mathsf{C}$, which is the set of entire functions $\psi$ of exponential type satisfying
\begin{equation}\label{eq_C}
\int_{-\infty}^{+\infty}\frac{\log^{+}{\vert \psi(t)\vert}}{1+t^2}\dd t<\infty.
\end{equation}
Here, the fact that $\mu$ is compactly supported is crucial. It implies that (\cite[Equation (5) Page 127]{Le}),
\begin{equation*}
N\left(R;\DLa(\mu)\right)\coloneqq \abs{\left\{\lambda\in\C, \DLa(\mu)=0,\vert\lambda\vert\leq R\right\}}=C_E^\prime R+o(R)
\end{equation*}
for some $C_E^\prime\neq 0$. As a consequence,
\begin{equation*}
N(R;\Lambda_E)\coloneqq \abs{\left\{\lambda\in\Lambda_E,\vert\lambda\vert\leq R\right\}}=O_E(R)
\end{equation*}
and
\begin{equation*}
N(R;\mathcal{Z}_E \setminus\Lambda_E)\simeq_{R\to+\infty}N(R;\mathcal{Z}_E).
\end{equation*}
Statistically speaking, this means that if $2\lambda$ is a zero of $L(E,s)$ of multiplicity $M_\lambda$ then $\lambda$ is a zero of $\zeta_\K(s)$ of multiplicity greater than $M_\lambda$. 
Of course, such result would not agree with the general admitted expectation that zeros of essentially different $L$-functions are linearly independent (see \cite[Page 13]{MR1954010}).
\end{remark}
%
%%%%%%%%%%%%%%%%%%%%%%%%%%%%%%%%%%%%%%%%%%%%%%%%%%%%%%%%%%%%%%%%%%%%%%%%%%%%%%%%%%%%%%%%
%%
%%
%%
%%%%%%%%%%%%%%%%%%%%%%%%%%%%%%%%%%%%%%%%%%%%%%%%%%%%%%%%%%%%%%%%%%%%%%%%%%%%%%%%%%%%%%%%
%
\subsection{Zeta functions of models of elliptic curves}%
%%!
Let $E$ be an elliptic curve over $\K$ of conductor $\mathfrak{q}_E$ and $\mathcal{E}$ be a regular model of $E$ over $\K$. 
In the two-dimensional adelic analysis  the Hasse zeta function of $\mathcal{E}$
is studied via its lift to a zeta integral on a certain two-dimensional adelic space,
see \cite{Fe3}. We assume that $\mathcal{E}$ satisfies all the conditions given in \cite[Sections 5.3 and 5.5]{Fe3},
i.e. the reduced part of every fibre  is semistable and $E$ has good or multiplicative reduction in residue characteristic 2 and 3.
If $f_0$ is a well-chosen test function in the Schwartz--Bruhat space on some two-dimensional adelic space then the two-dimensional zeta integral $\zeta_\mathcal{E}(f_0,s)$ defined in
\cite[Section 5]{Fe3}
equals
\begin{equation} \label{703}
\zeta_\mathcal{E}(f_0,s)=\prod_{1\leq i\leq I}\Lambda_{\K_i}(s/2)^{2} c_\mathcal{E}^{1-s}\zeta_\mathcal{E}(s)^2
\end{equation}
where $i$ runs through finitely many indices
and $\K_i$ are finite extensions of $\K$ which include
$\K$ itself,
\begin{equation}\label{704}
c_\mathcal{E}\coloneqq \prod_{\text{$y$ singular}}\mathfrak{k}_y.
\end{equation}
Let us say a few words on the constant $c_\mathcal{E}$. We know that
\begin{equation*}
\text{N}_{\K\vert\Q}(\mathfrak{q}_E)=\prod_{\text{$y=\mathcal{E}_b$ singular}}\abs{ k(b)}^{f_b}
\end{equation*}
according to \cite[Section 4.1]{Se} and that
%%!
\begin{equation*}
\mathfrak{k}_y\coloneqq \begin{cases}
\abs{ k(b)}^{f_b+m_b-1} & \text{if $y=\mathcal{E}_b$ is singular},  \\
1 & \text{otherwise}
\end{cases}
\end{equation*}
where $m_b$ is the number of irreducible geometric components of the fibre (\cite[Section 7.3]{Fe3}). As a consequence, the constant $c_\mathcal{E}$ satisfies
\begin{equation}\label{eq_c_E}
c_\mathcal{E}=\text{N}_{\K\vert\Q}(\mathfrak{q}_E)\prod_{1\leq j\leq J}q_j
\end{equation}
since
\begin{equation*}
\prod_{1\leq j\leq J}q_j=\prod_{b\in B_0}\abs{ k(b)}^{m_b-1}
\end{equation*}
according to \cite[Section 7.3]{Fe3}.\newline\newline
In \cite{Fe3} it is conjectured that the two-dimensional zeta integral satisfies the functional equation
\begin{equation*}
\zeta_\mathcal{E}(f_0,s)=\zeta_\mathcal{E}(f_0,2-s)
\end{equation*}
namely
\begin{equation} \label{705}
\left(c_\mathcal{E}^{-2}\right)^{s/2}\zeta_\mathcal{E}(s)^2=\left(c_\mathcal{E}^{-2}\right)^{(2-s)/2}\zeta_\mathcal{E}(2-s)^2
\end{equation}
%%!
(note that the completed rescaled zeta functions of $\K_i$ will cancel each out
in the functional equation).
It is easy to check the compatibility with the previously seen functional equations. If $\zeta_E(s)$ satisfies the functional equation
\begin{equation*}
\left(\text{N}_{\K\vert\Q}(\mathfrak{q}_E)^{-1}\right)^{s/2}\zeta_E(s)=(-1)^{r_1+r_2}\omega_E\left(\text{N}_{\K\vert\Q}(\mathfrak{q}_E)^{-1}\right)^{(2-s)/2}\zeta_E(2-s)
\end{equation*}
then
\begin{eqnarray*}
\zeta_\mathcal{E}(s) & = & n_\mathcal{E}(s)\zeta_E(s) \\
& = & (-1)^{r_1+r_2}\omega_E\text{N}_{\K\vert\Q}(\mathfrak{q}_E)^{s-1}\frac{n_\mathcal{E}(s)}{n_\mathcal{E}(2-s)}\zeta_\mathcal{E}(2-s) \\
& = & (-1)^{r_1+r_2+J}\omega_E\left(\text{N}_{\K\vert\Q}(\mathfrak{q}_E)\prod_{1\leq j\leq J}q_j\right)^{s-1}\zeta_\mathcal{E}(2-s) \\
& = & (-1)^{r_1+r_2+J}c_\mathcal{E}^{s-1}\omega_E\zeta_\mathcal{E}(2-s)
\end{eqnarray*}
according to \eqref{eq_c_E}. Thus, Hypothesis $\NiceHW(\K)$ is equivalent to the following hypothesis.
\begin{hypothesisH}
For every  elliptic curve $E$  over the number field $\K$  and regular model
 $\mathcal{E}$ of $E$ 
the  zeta function $\zeta_{\mathcal{E}}(s)$ satisfies the following \emph{nice} analytic properties:
\begin{itemize}
\item
it can be extended to a meromorphic function on $\C$,
\item
it satisfies a functional equation of the shape
\[%
\left(c_\mathcal{E}^{-1}\right)^{s/2}\zeta_\mathcal{E}(s)=(-1)^{r_1+r_2+J}\omega_E\left(c_\mathcal{E}^{-1}\right)^{(2-s)/2}\zeta_\mathcal{E}(2-s)
\]
for some sign $\omega_E=\pm 1$.
\end{itemize}
\end{hypothesisH} 
\begin{hypothesisMPH}
For every elliptic curve $E$ over $\K$, if 
\begin{equation*}
Z_\mathcal{E}(s)=
\left(\prod_{1\leq i\leq I}\Lambda_{\K_i}(s)\right)\left(c_\mathcal{E}^{-1}\right)^{2s/2}\zeta_\mathcal{E}(2s)
\end{equation*}
then the function
\begin{equation*}
h_\mathcal{E}(x)\coloneqq f_{Z_\mathcal{E}}(x)-(-1)^{r_1+r_2+J}\omega_Ex^{-1}f_{Z_\mathcal{E}}\left(x^{-1}\right)
\end{equation*}
is $\mathcal{C}_{\rm poly}^\infty({\R_+^\times})$-mean-periodic (respectively $\mathbf{S}(\R_+^\times)^*$-mean-periodic), 
or the function $H_{\mathcal{E}}(t)\coloneqq h_{\mathcal{E}}(e^{-t})$ is $\mathcal{C}_{\rm exp}^\infty({\R})$-mean-periodic (respectively $\mathbf{S}(\R)^*$-mean-periodic), 
where $f_{Z_\mathcal{E}}$ is the inverse Mellin transform of $Z_\mathcal{E}$ defined in \eqref{315}.
\end{hypothesisMPH}
Another link with mean-periodicity is described in the following theorem. 
\begin{theorem}\label{prop_H1}
Let $\K$ be a number field. 
\begin{itemize}
\item
Hypothesis $\NiceWE(\K)$ implies Hypothesis $\MPH(\K)$.
\item 
If Hypothesis $\MPH(\K)$ or Hypothesis $\MPHW(\K)$ holds 
then Hypothesis $\NiceHW(\K)$ and Hypothesis $\NiceH(\K)$ hold.
\end{itemize}
\end{theorem}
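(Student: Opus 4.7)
The proof follows the same pattern as that of Theorem \ref{prop_HW1}, with $Z_\mathcal{E}(s)$ replacing $Z_E(s)$; the only new analytic ingredient is the extra factor $n_\mathcal{E}(2s)$ coming from \eqref{701}--\eqref{702}. For the second statement I moreover exploit the equivalence $\NiceHW(\K) \Leftrightarrow \NiceH(\K)$ recorded in the discussion preceding Hypothesis $\NiceH(\K)$.

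For $\NiceWE(\K) \Rightarrow \MPH(\K)$, I would write $Z_\mathcal{E}(s) = \gamma(s) D(s)$ with $\gamma(s) = \prod_i \Lambda_{\K_i}(s)$ and $D(s) = (c_\mathcal{E}^{-1})^{s}\zeta_\mathcal{E}(2s)$, and check that $Z_\mathcal{E} \in \mathcal{F}$ in the sense of Definition \ref{def_E}. Its poles (those of the $\Lambda_{\K_i}$, those of $\zeta_E(2s)$ at $s=1/2,1$, the infinite family $s = 1/2 + \pi i k/\log q_j$ produced by $n_\mathcal{E}(2s) = \prod_j (1-q_j^{1-2s})^{-1}$, and those coming from zeros of $\Lambda(E,2s)$) all lie in the strip $|\Re(s)-1/2| \leq 1/2$; conditions \eqref{313} and \eqref{eq_AC} are verified as in the proof of Theorem \ref{prop_HW1}, via Stirling, the convexity bounds \eqref{eq_conv_Zeta}, and absolute convergence of the Dirichlet series. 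Theorem \ref{thm_302} then produces $h_\mathcal{E}(x)$ as a sum of exponential polynomials in $\mathcal{C}_{\rm poly}^\infty(\R_+^\times)$. To apply Theorem \ref{thm_303}, I must write $Z_\mathcal{E} = U/V$ with entire $U,V$. The genuinely new step is treating $n_\mathcal{E}(2s)^{-1} = A(s) := \prod_j (1-q_j^{1-2s})$: it is entire, but only transforms as $A(s) = (-1)^J(\prod_j q_j)^{1-2s} A(1-s)$. I would absorb the exponential factor by setting $B(s) := A(s)(\prod_j q_j)^{s-1/2}$, which is entire, bounded in vertical strips, and satisfies the clean functional equation $B(s) = (-1)^J B(1-s)$. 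Using \eqref{feq_HW} exactly as in the proof of Theorem \ref{prop_HW1}, one then takes $V(s) = Q(s)P(2s)B(s)\Lambda(E,2s)$ and $U(s) = V(s) Z_\mathcal{E}(s)$, where $P$ is the polynomial provided by $\NiceWE(\K)$ and $Q$ is a symmetric polynomial clearing the remaining poles of $\prod_i\Lambda_{\K_i}(s)\Lambda_\K(2s)\Lambda_\K(2s-1)$. Both $U$ and $V$ are entire, decay exponentially in vertical strips by Stirling, and their functional equation signs have ratio $(-1)^{r_1+r_2+J}\omega_E$, so Theorem \ref{thm_303} delivers the required mean-periodicity.

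The reverse direction is quicker. $\MPH(\K) \Rightarrow \NiceH(\K)$ follows directly from Theorem \ref{thm_rmp} applied to $f_{Z_\mathcal{E}}$: mean-periodicity of $h_\mathcal{E}(x)$ (or equivalently of $H_\mathcal{E}(t)$) supplies the meromorphic continuation of $\omega_{f_{Z_\mathcal{E}},\epsilon}$ to $\C$ and the functional equation $Z_\mathcal{E}(s) = (-1)^{r_1+r_2+J}\omega_E Z_\mathcal{E}(1-s)$, which upon dividing out the entire factor $\prod_i \Lambda_{\K_i}(s)$ is exactly $\NiceH(\K)$. Meanwhile $\MPHW(\K) \Rightarrow \NiceHW(\K)$ is just Theorem \ref{prop_HW1}. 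The equivalence $\NiceHW(\K) \Leftrightarrow \NiceH(\K)$ then delivers both conclusions from either of the two starting hypotheses. I expect the main technical obstacle throughout to be condition \eqref{314}: the dense family of poles of $n_\mathcal{E}(2s)$ on the critical line forces one to choose the sequence $\{t_m\}$ avoiding them while simultaneously bounding $|\zeta_\mathcal{E}(2\sigma\pm it_m)|$ polynomially, and this is exactly where Proposition \ref{lem_601} of the appendix is designed to intervene.
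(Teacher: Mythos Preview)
Your proposal is correct and follows essentially the same route as the paper. The paper's $V(s)$ carries the exponential factor $(c_\mathcal{E}^{-1})^{-s}(\text{N}_{\K\vert\Q}(\mathfrak{q}_E)^{-1})^{s}=(\prod_j q_j)^s$ multiplied by $n_\mathcal{E}(2s)^{-1}$, which up to a constant is exactly your $B(s)$; the paper's explicit pole-clearing polynomials $(2s-1)s^{1+I}(s-1)^{1+I}$ and $(2s-1)^{r_1+r_2+1}s^{1+I}(s-1)^{1+I}$ play the role of your $Q$, and your identification of Proposition~\ref{lem_601} as the tool for condition~\eqref{314} matches the paper's invocation precisely.
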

\begin{remark}
We do not describe the explicit formula for $h_\mathcal{E}(e^{-t})$ here but let us say that such formula should contain the contribution of the poles of $n_\mathcal{E}(2s)$.
\end{remark}
\begin{proof}[\proofname{} of Theorem~\ref{prop_H1}]%
We have already seen that Hypothesis $\NiceHW(\K)$ and Hypothesis $\NiceH(\K)$ are equivalent. 
Hence the second assertion is a consequence of Theorem \ref{thm_rmp}. 
Let us show the first assertion applying Theorem \ref{thm_302} and Theorem \ref{thm_303}. 
Adopting the same notations as in Theorem \ref{thm_302}, we choose $Z_1(s)=Z_2(s)=\gamma(s)D(s)=Z_\mathcal{E}(s)$ with $\gamma(s)=\prod_{1\leq i\leq I}\Lambda_{\K_i}(s)$ and $D(s)=\left(c_\mathcal{E}^{-1}\right)^{2s/2}\zeta_\mathcal{E}(2s)$. The functional equation satisfied by $Z_\mathcal{E}(s)$ is
\[
Z_\mathcal{E}(s)=(-1)^{r_1+r_2+J}\omega_EZ_\mathcal{E}(1-s).
\]
In addition, $Z_\mathcal{E}(s)$ belongs to $\mathcal{F}$ since each $\Lambda_{\K_i}(s)$ has two poles at $s=0,1$ and the poles of $\gamma(s)$ and $D(s)$ are in the vertical strip $\abs{\Re{(s)}-1/2}\leq 1/2\coloneqq w$ and
\begin{itemize}
\item
the estimate \eqref{313} follows from Stirling's formula and convexity bounds for Dedekind zeta functions given in \eqref{eq_conv_Zeta},
\item
the estimate \eqref{eq_AC} follows from the Dirichlet series expansion of $\zeta_E(s)$ for $\Re(s)>2$ 
and from the fact that $n_\mathcal{E}(2s)$ is uniformly bounded on $\Re{(s)}>1/2+w$, $w>0$,
\item
the crucial condition \eqref{314} is an application of Proposition \ref{lem_601} and the convexity bounds for the Dedekind zeta function given in \eqref{eq_conv_Zeta}. Note that $n_\mathcal{E}(2s)$ is a finite Euler product, which may have infinitely many poles only on the critical line $\Re{(s)}=1/2$ but its set of poles is a well-spaced set namely
\begin{equation*}
\amalg_{1\leq j\leq J}\frac{2\pi}{\log{q_j}}\Z.
\end{equation*}
\end{itemize}
From Hypothesis $\NiceWE(\K)$, $P(s)L(E,s)$ is an entire function for some polynomial $P(s)$ satisfying $P(s)=P(1-s)$. 
Adopting the same notations as in Theorem \ref{thm_303}, we choose $U_1(s)=U_2(s)=U(s)$ and $V_1(s)=V_2(s)=V(s)$ where
\begin{align*}
U(s) & \coloneqq   (4\pi)^{-r_1}(2\pi)^{-r_2}\abs{ d_\K}^{1/2}(2s-1)^{r_1+r_2+1}s^{1+I}(s-1)^{1+I}\left(\prod_{1\leq i\leq I}\Lambda_{\K_i}(s)\right)\Lambda_\K(2s)\Lambda_\K(2s-1)P(2s),  \\
V(s) & \coloneqq   (2s-1)s^{1+I}(s-1)^{1+I} \left(c_\mathcal{E}^{-1}\right)^{-2s/2}\left(\text{N}_{\K\vert\Q}(\mathfrak{q}_E)^{-1}\right)^{2s/2}n_{\mathcal{E}}(2s)^{-1}\Lambda(E,2s)P(2s).
\end{align*}
$U(s)$ and $V(s)$ are some entire functions satisfying the functional equations
\begin{eqnarray*}
U(s) & = & (-1)^{r_1+r_2+1}U(1-s), \\
V(s) & = & (-1)^{1+J}\omega_EV(1-s).
\end{eqnarray*}
Note that the sign $(-1)^J$, which occurs in the second functional equation, is implied by \eqref{eq_c_E}. The estimate \eqref{323} is an easy consequence of Stirling's formula and convexity bounds for $L(E,s)$ and the Dedekind zeta function given in \eqref{eq_conv_L_E} and in \eqref{eq_conv_Zeta}.
\end{proof}
We get, arguing along the same lines, the following theorem. 
\begin{theorem}\label{theor_H2}
Let $\K$ be a number field and $E$ be an elliptic curve over $\K$.
\begin{itemize}
\item
If Hypothesis $\NiceWE(\K)$ holds then the function
\begin{equation*}
h_\mathcal{E}^{(2)}(x)\coloneqq f_{Z_\mathcal{E}^2}(x)-x^{-1}f_{Z_\mathcal{E}^2}\left(x^{-1}\right),
\end{equation*}
where  $Z_{\mathcal{E}}(s)\coloneqq \left(\prod_{1\leq i\leq I}\Lambda_{\K_i}(s)\right)\left(c_\mathcal{E}^{-1}\right)^{2s/2}\zeta_\mathcal{E}(2s)$, 
is $\mathcal{C}_{\rm poly}^\infty({\R_+^\times})$-mean-periodic (respectively $\mathbf{S}(\R_+^\times)^*$-mean-periodic), 
and the function $H_{\mathcal{E}}^{(2)}(t)\coloneqq h_{\mathcal{E}}^{(2)}(e^{-t})$ is $\mathcal{C}_{\rm exp}^\infty({\R})$-mean-periodic (respectively $\mathbf{S}(\R)^*$-mean-periodic), 
where $f_{Z_\mathcal{E}^2}$  is the inverse Mellin transform of $Z_\mathcal{E}^2$ defined in \eqref{315}.
\item
If $h_\mathcal{E}^{(2)}(x)$ is $\mathcal{C}_{\rm poly}^\infty({\R_+^\times})$-mean-periodic 
or $\mathbf{S}(\R_+^\times)^*$-mean-periodic, 
or $H_{\mathcal{E}}^{(2)}(t)$ is $\mathcal{C}_{\exp}^\infty({\R})$-mean-periodic or $\mathbf{S}(\R)^*$-mean-periodic, 
then $\zeta_\mathcal{E}(s)^2$ extends  to a meromorphic function on $\C$, which satisfies the functional equation
\begin{equation*}
\left(c_\mathcal{E}^{-2}\right)^{s/2}\zeta_\mathcal{E}^2(s)=\left(c_\mathcal{E}^{-2}\right)^{(2-s)/2}\zeta_\mathcal{E}^2(2-s).
\end{equation*}
\end{itemize}
\end{theorem}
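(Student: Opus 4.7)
The plan is to imitate the proof of Theorem~\ref{prop_H1}, squaring every relevant factor. The crucial simplification is that squaring $Z_\mathcal{E}(s)$ kills the sign of its functional equation: since
\[
Z_\mathcal{E}(s)=(-1)^{r_1+r_2+J}\omega_E Z_\mathcal{E}(1-s),
\]
we automatically get $Z_\mathcal{E}(s)^2=Z_\mathcal{E}(1-s)^2$ with sign $\epsilon=+1$, which is precisely the sign showing up in the conjectural functional equation of $\zeta_\mathcal{E}(s)^2$.

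For the first assertion I would apply Theorem~\ref{thm_302} and Theorem~\ref{thm_303} to $Z_1(s)=Z_2(s)=Z_\mathcal{E}(s)^2=\gamma(s)D(s)$ with
\[
\gamma(s)=\Bigl(\prod_{1\leq i\leq I}\Lambda_{\K_i}(s)\Bigr)^{2},\qquad D(s)=\bigl(c_\mathcal{E}^{-1}\bigr)^{2s}\zeta_\mathcal{E}(2s)^2,
\]
and check that $Z_\mathcal{E}^2\in\mathcal{F}$ exactly as in the proof of Theorem~\ref{prop_H1}: the poles lie in the strip $\abs{\Re(s)-1/2}\leq 1/2$, bound \eqref{313} follows from Stirling plus the convexity estimate \eqref{eq_conv_Zeta} (both applied with exponent doubled), bound \eqref{eq_AC} from the Dirichlet series of $\zeta_E(2s)$ together with the uniform boundedness of $n_\mathcal{E}(2s)^{2}$ on $\Re(s)>1/2+w$, and the key bound \eqref{314} along a sequence $t_m\to\infty$ from Proposition~\ref{lem_601} (applied to the $L$-function $L(E,\cdot)^2$) combined with \eqref{eq_conv_Zeta}; here we still use that the extra poles coming from $n_\mathcal{E}(2s)^{-2}$ are distributed along the well-spaced union $\amalg_{j}(2\pi/\log q_j)\Z$, so they can be avoided by the sequence $\{t_m\}$.

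To invoke Theorem~\ref{thm_303} I would take the \emph{squares} of the auxiliary entire functions used in the proof of Theorem~\ref{prop_H1}; more precisely, with $P$ a polynomial symmetric under $s\mapsto 1-s$ such that $P(s)L(E,s)$ is entire (provided by Hypothesis $\NiceWE(\K)$), set
\begin{align*}
U(s) & \coloneqq \bigl((4\pi)^{-r_1}(2\pi)^{-r_2}\abs{d_\K}^{1/2}\bigr)^{2}(2s-1)^{2(r_1+r_2+1)}s^{2+2I}(s-1)^{2+2I} \\
     & \qquad\times\Bigl(\prod_{1\leq i\leq I}\Lambda_{\K_i}(s)\Bigr)^{2}\Lambda_\K(2s)^{2}\Lambda_\K(2s-1)^{2}P(2s)^{2},\\
V(s) & \coloneqq (2s-1)^{2}s^{2+2I}(s-1)^{2+2I}\bigl(c_\mathcal{E}^{-1}\bigr)^{-2s}\bigl(\mathrm{N}_{\K\vert\Q}(\mathfrak{q}_E)^{-1}\bigr)^{2s}n_\mathcal{E}(2s)^{-2}\Lambda(E,2s)^{2}P(2s)^{2}.
\end{align*}
Both are entire, and their functional equations have sign $((-1)^{r_1+r_2+1})^{2}=+1$ and $((-1)^{1+J}\omega_E)^{2}=+1$ respectively, so the quotient produces the required $\epsilon=+1$. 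The growth bound \eqref{323} is the squared version of what was established in the proof of Theorem~\ref{prop_H1}, which only introduces an extra factor $\abs{t}^{-(1+\delta)}$ and is harmless.

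For the second assertion, $h_\mathcal{E}^{(2)}(x)=f_{Z_\mathcal{E}^{2}}(x)-x^{-1}f_{Z_\mathcal{E}^{2}}(x^{-1})$ is exactly the boundary term $h_{f,\epsilon}$ with $f=f_{Z_\mathcal{E}^{2}}$ and $\epsilon=+1$; by \eqref{318} and \eqref{319}, $f$ has rapid decay at $+\infty$ and polynomial growth at $0^{+}$, so Theorem~\ref{thm_rmp} applies and yields that $\Me(f_{Z_\mathcal{E}^{2}})(s)=Z_\mathcal{E}(s)^{2}$ extends meromorphically to $\C$ with $Z_\mathcal{E}(s)^{2}=Z_\mathcal{E}(1-s)^{2}$. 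Rewriting this in terms of $\zeta_\mathcal{E}(2s)^{2}$ and substituting $s\mapsto s/2$ gives the claimed functional equation for $\zeta_\mathcal{E}(s)^{2}$. The only real work is the squared version of the estimate \eqref{314}, which I expect to be the main obstacle since the $L$-function of $E$ enters through $L(E,2s)^{2}$ and one must find a single sequence $\{t_m\}$ along which a lower bound for $\abs{L(E,\cdot)}$ holds off the well-spaced poles of $n_\mathcal{E}(2s)^{-2}$; this is precisely what Proposition~\ref{lem_601} provides.
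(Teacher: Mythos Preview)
Your proposal is correct and follows precisely the approach the paper intends: the paper does not write out a proof for this theorem but simply states ``We get, arguing along the same lines, the following theorem,'' referring to the proof of Theorem~\ref{prop_H1}. You have faithfully carried out that program by squaring the relevant factors, correctly observing that the sign becomes $\epsilon=+1$, and invoking Theorems~\ref{thm_302}, \ref{thm_303}, and \ref{thm_rmp} with the squared choices of $\gamma$, $D$, $U$, $V$; the only cosmetic point is that for \eqref{314} one applies Proposition~\ref{lem_601} to $L(E,\cdot)$ itself and then squares the resulting polynomial bound, rather than applying it directly to $L(E,\cdot)^2$.
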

\begin{remark}
The first part of the previous theorem justifies the hypothesis on mean-periodicity
of  $H_{\mathcal{E}}^{(2)}(t)$  suggested in 
\cite[Section 47]{Fe2} and \cite[Section 7.3]{Fe3}
\end{remark}
\begin{remark}
See the recent work  \cite{Su2} for a description of the  convolutor of the  $h_\mathcal{E}^{(2)}(x)$
which uses the Soul\'e  extension \cite{So} of the theory of Connes \cite{Co}. 
Assuming  modularity of $E$, 
this work demonstrates some duality between the two dimensional commutative adelic analysis  on $\mathcal{E}$
and the theory of cuspidal automorphic adelic $GL(2)$-representations. 

\end{remark}
%
%..............................................................................................
%
\subsection{Zeta functions of schemes and mean-periodicity}%
\begin{theorem}\label{schemess}
Let ${\mathcal Z}(s)$ be a complex valued function defined in $\Re{(s)}>\sigma_1$.\newline
\noindent
{\rm (I)} Assume that there exists a decomposition
${\mathcal Z}(s) = {\mathcal L}_{1}(s){\mathcal L}_{2}(s)^{-1}$
such that
\begin{itemize}
\item 
${\mathcal L}_i(s)~(i=1,2)$ are some absolutely convergent Dirichlet series in the half plane $\Re(s)>\sigma_1$,
\item
${\mathcal L}_i(s)~(i=1,2)$ have a meromorphic continuation to $\C$,
\item
there exist some ${\mathfrak q}_i>0$, $r_i \geq 1$, $\lambda_{i,j}>0$, 
${\rm Re}(\mu_{i,j}) > - \sigma_1 \lambda_{i,j}$ $(1\leq j \leq r_i)$
and $\abs{\epsilon_i}=1$ such that the function
\begin{equation*}
\widehat{\mathcal L}_i(s)
\coloneqq  \gamma_i(s) {\mathcal L}_i(s)
\coloneqq  {\mathfrak q}_i^{s/2} \prod_{j=1}^{r_i} \Gamma(\lambda_{i,j} s + \mu_{i,j}) 
{\mathcal L}_i(s),
\end{equation*}
satisfies the functional equation $\widehat{\mathcal L}_i(s)=\epsilon_i 
\overline{\widehat{\mathcal L}_i(d+1-\bar{s})}$
for some integer $d\geq 0$,
\item 
there exists a polynomial $P(s)$ such that $P(s)\widehat{\mathcal 
{L}_i}(s)~(i=1,2)$ are entire functions on the complex plane of order one, 
\item 
the logarithmic derivative of ${\mathcal L}_2(s)$ is an absolutely convergent Dirichlet series in the right-half plane $\Re(s)>\sigma_2\geq\sigma_1$.
\end{itemize}
Under the above assumptions, we define
\begin{eqnarray*}
\Lambda_{\mathcal Z}(s)  & \coloneqq & \frac{\widehat{\mathcal L}_1(s)}{\widehat{\mathcal L}_2(s)}
= \frac{\gamma_1(s)}{\gamma_2(s)} \, {\mathcal Z}(s) = \gamma(s) {\mathcal Z}(s), \\
\Lambda_{\widetilde{\mathcal{Z}}}(s)  & \coloneqq & \overline{\Lambda_{\mathcal Z}(\overline{s})}
\end{eqnarray*}
and the inverse Mellin transforms
\begin{eqnarray*}
f_{{\mathcal Z},m}(s)  & \coloneqq  & \frac{1}{2 \pi i} \int_{(c)} \Lambda_{\Q}(s)^m \Lambda_{\mathcal Z}((d+1)s) \, x^{-s} ds,  \\
f_{\widetilde{\mathcal{Z}},m}(s)  & \coloneqq  & \frac{1}{2 \pi i} \int_{(c)} \Lambda_{\Q}(s)^m \Lambda_{\widetilde{\mathcal{Z}}}((d+1)s) \, x^{-s} ds
\end{eqnarray*}
where  $c > 1/2 +w$. Then there exists an integer $m_{\mathcal Z} \in {\Z}$ such that the 
function
\begin{equation}\label{eqeqeq}
h_{{\mathcal Z}, m}(x)\coloneqq f_{{\mathcal Z},m}(x) - \epsilon x^{-1} f_{\widetilde{\mathcal Z},m} (x^{-1}),
\quad  (\epsilon = \epsilon_1  \epsilon_2^{-1})
\end{equation}
is ${\mathcal C}_{\rm poly}^\infty({\R_+^\times})$-mean-periodic and 
${\mathbf S}({\R}_+^\times)^*$-mean-periodic, and the function 
$H_{{\mathcal Z}, m}(t)\coloneqq h_{{\mathcal Z}, m}(e^{-t})$ is 
${\mathcal C}_{\exp}^\infty({\R})$-mean-periodic 
and $\mathbf{S}({\R})^*$-mean-periodic for every integer $m \geq m_{\mathcal Z}$.\newline\newline
\noindent
{\rm (II)} Conversely, suppose that there exists a meromorphic function 
$\gamma(s)$ on ${\C}$ and an integer $m$
such that
\begin{equation*}
\Lambda_{\Q}(s)^m \gamma((d+1)s) \ll_{a,b,t_0} \abs{t}^{-1-\delta} \quad 
(s=\sigma+it, ~a \leq\sigma\leq b,~\abs{t}\geq t_0)
\end{equation*}
for some $\delta>0$ for all $a \leq b$, and that the function 
$h_{{\mathcal Z},m}(x)$ defined in \eqref{eqeqeq} is $\mathcal{C}_{\rm poly}^\infty({\R_+^\times})$-mean-periodic
or $\mathbf{S}(\R_+^\times)^*$-mean-periodic, or the function  
$H_{{\mathcal Z},m}(t)\coloneqq h_{{\mathcal Z},m}(e^{-t})$ is 
$\mathcal{C}_{\exp}^\infty({\R})$-mean-periodic or ${\mathbf S}({\R})^*$-mean-periodic. Then the function  ${\mathcal Z}(s)$ extends meromorphically to ${\C}$
and satisfies the functional equation
\begin{equation} \label{feq_zeta}
\gamma(s) {\mathcal Z}(s) = \epsilon \overline{\gamma(d+1-\bar{s})} \,  \overline{{\mathcal Z}(d+1-\bar{s})}.
\end{equation}
\end{theorem}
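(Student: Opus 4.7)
The plan is to reduce Part (I) to Theorems \ref{thm_302} and \ref{thm_303}, and Part (II) to Theorem \ref{thm_rmp}, in direct analogy with the proofs of Theorems \ref{prop_HW1} and \ref{prop_H1}. Set
\[
Z_1(s) \coloneqq \Lambda_{\Q}(s)^m \Lambda_{\mathcal{Z}}((d+1)s), \qquad Z_2(s) \coloneqq \Lambda_{\Q}(s)^m \Lambda_{\widetilde{\mathcal{Z}}}((d+1)s).
\]
Taking complex conjugates in the functional equation of each $\widehat{\mathcal{L}}_i$ yields $\Lambda_{\widetilde{\mathcal{Z}}}(s) = \epsilon^{-1}\Lambda_{\mathcal{Z}}(d+1-s)$, and together with $\Lambda_{\Q}(s)=\Lambda_{\Q}(1-s)$ this produces the symmetry $Z_1(s)=\epsilon Z_2(1-s)$ required by the general theorems.

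For Part (I), I would first check that $Z_1, Z_2$ belong to the class $\mathcal{F}$ of Definition \ref{def_E} for every sufficiently large $m$. Writing each $Z_i$ as the product of an archimedean factor (a power of $\Lambda_{\Q}$ times the $\Gamma$-quotient in $\gamma_1/\gamma_2$) and a Dirichlet part (the appropriate value or conjugate of $\mathcal{L}_1((d+1)s)/\mathcal{L}_2((d+1)s)$), the four conditions of Definition \ref{def_E} follow from: the finiteness of the poles of the $\widehat{\mathcal{L}}_j$ (they lie at zeros of $P$) and the confinement of the zeros of $\mathcal{L}_2$ to the critical strip, which is forced by absolute convergence of the logarithmic derivative of $\mathcal{L}_2$ for $\Re(s)>\sigma_2$ together with the functional equation; Stirling, because $\Lambda_{\Q}(s)^m$ decays exponentially in vertical strips and swamps the polynomial growth of the $\Gamma$-quotient, giving \eqref{313}; absolute convergence of $\mathcal{L}_1$ together with non-vanishing of $\mathcal{L}_2$ on the right, giving \eqref{eq_AC}; and Proposition \ref{lem_601} of the appendix, which supplies the sequence $(t_n)$ along which condition \eqref{314} holds for the quotient. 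Theorem \ref{thm_302} then delivers continuity, polynomial growth at $0^+$ and $+\infty$, smoothness, and the representation of $h_{\mathcal{Z},m}$ and $H_{\mathcal{Z},m}$ as limits of exponential polynomials. To upgrade these to mean-periodicity I apply Theorem \ref{thm_303} with
\[
V_i(s) \coloneqq R(s)\, P((d+1)s)\, \widehat{\mathcal{L}}_{2,i}((d+1)s), \qquad U_i(s) \coloneqq \Lambda_{\Q}(s)^m\, R(s)\, P((d+1)s)\, \widehat{\mathcal{L}}_{1,i}((d+1)s),
\]
where $\widehat{\mathcal{L}}_{j,i}$ is $\widehat{\mathcal{L}}_j$ or its complex conjugate as appropriate and $R(s)$ is an auxiliary polynomial chosen to absorb the poles of $\Lambda_{\Q}(s)^m$ at $s=0,1$ while preserving the correct functional equation of $U_i$ and $V_i$. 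The bound \eqref{323} then follows from Stirling together with the standard convexity bounds for $\mathcal{L}_1$ and $\mathcal{L}_2$ in vertical strips, and Theorem \ref{thm_303} gives simultaneously the $\mathcal{C}_{\rm poly}^\infty(\R_+^\times)$- and $\mathbf{S}(\R_+^\times)^\ast$-mean-periodicity of $h_{\mathcal{Z},m}$ and the corresponding statements for $H_{\mathcal{Z},m}$.

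For Part (II), the stated decay assumption ensures that $f_{\mathcal{Z},m}$ is of rapid decay at $+\infty$ and of polynomial order at $0^+$, so Mellin inversion identifies $\Me(f_{\mathcal{Z},m})(s)$ with $\Lambda_{\Q}(s)^m\Lambda_{\mathcal{Z}}((d+1)s)$ in the initial right half-plane. Decomposing
\[
\Me(f_{\mathcal{Z},m})(s) = \int_1^{+\infty} f_{\mathcal{Z},m}(x)\, x^s\, \frac{\dd x}{x} + \epsilon \int_1^{+\infty} f_{\widetilde{\mathcal{Z}},m}(x)\, x^{1-s}\, \frac{\dd x}{x} + \omega_{\mathcal{Z},m}(s),
\]
with $\omega_{\mathcal{Z},m}(s)\coloneqq \int_0^1 h_{\mathcal{Z},m}(x)\, x^s\, \dd x/x$, the first two integrals are entire by rapid decay. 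The mean-periodicity assumption combined with Proposition \ref{proposition_MC} identifies $\omega_{\mathcal{Z},m}$ with the Mellin--Carleman transform of $h_{\mathcal{Z},m}$ and hence extends it meromorphically to $\C$; the symmetry \eqref{317} applied to the pair $(h_{\mathcal{Z},m}, h_{\widetilde{\mathcal{Z}},m})$ together with Proposition \ref{lemma_mpC} produces the functional equation $\omega_{\mathcal{Z},m}(s)=\epsilon\, \omega_{\widetilde{\mathcal{Z}},m}(1-s)$, which after dividing by $\Lambda_{\Q}(s)^m\gamma((d+1)s)$ gives meromorphic continuation of $\mathcal{Z}(s)$ and the functional equation \eqref{feq_zeta}. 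The main obstacle throughout is the verification of \eqref{314}: neither $\mathcal{L}_1$ nor $\mathcal{L}_2$ is entire, so obtaining polynomial bounds for the quotient along a well-spaced horizontal sequence in a neighbourhood of the critical strip is the delicate step, made possible by the absolute convergence of $\mathcal{L}_2'/\mathcal{L}_2$ on $\Re(s)>\sigma_2$ and by Proposition \ref{lem_601}.
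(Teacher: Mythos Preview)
Your proposal is correct and follows essentially the same route as the paper: reduce (I) to Theorems \ref{thm_302} and \ref{thm_303} after checking that $\Lambda_{\Q}(s)^m\Lambda_{\mathcal Z}((d+1)s)$ lies in $\mathcal F$ for $m$ large, and reduce (II) to the general Theorem \ref{thm_rmp}. Your choices of $U_i,V_i$ coincide with the paper's up to the rescaling $s\mapsto(d+1)s$ and the extra polynomial $R(s)$ clearing the poles of $\Lambda_{\Q}(s)^m$ at $s=0,1$; this last point is in fact a place where you are more careful than the paper, which omits $R(s)$ (compare the analogous factors $s^2(s-1)^2$ in the proofs of Theorems \ref{prop_HW1} and \ref{prop_H1}). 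The verification of \eqref{314} via Proposition \ref{lem_601}, and of the polynomial bound for $\mathcal L_1$ via Phragm\'en--Lindel\"of, is exactly what the paper does.
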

\begin{remark}
If $\mathcal{Z}(s)$ is real-valued on the real line, namely the Dirichlet coefficients of $\mathcal{L}_i(s)$ ($i=1,2$) are real, then $\overline{\mathcal{Z}(\bar{s})}=\mathcal{Z}(s)$ for any complex number $s$ and $\mathcal{Z}(s)$ is self-dual with $\epsilon=\pm 1$. This is the case when studying the  zeta functions of arithmetic schemes.
\end{remark}
\begin{remark}
Theorem \ref{schemess} can be applied to the study of the  zeta functions
of arithmetic schemes. Let $S$ be a scheme of dimension $d+1$ proper flat over ${\rm Spec} \, \Z$ with smooth  generic fibre. 
Denote by  $ {\mathcal Z}_S(s) $ the rescaled  zeta function $\zeta_S(s)$ defined in Section 5.1 such that its functional equation is with respect to $s\to 1-s$. 
The function ${\mathcal Z}_S(s)$ can be canonically written as the quotient of two meromorphic functions with finitely many poles ${\mathcal L}_1(s){\mathcal L}_2(s)^{-1}$, and the numerator and denominator are factorized into the product of certain $L$-factors. 
In particular,  all the assumptions of the  theorem above follow from  the much stronger standard conjectures on the $L$-factors of the zeta functions, see e.g.  Serre (\cite{Se}). 
The last condition about the absolute convergence of the logarithmic derivative of the denominator follows from the fact that the denominator of the Hasse zeta function is the product of Euler factors $(1-\alpha_{1}(p)p^{-s})^{-1} \dots (1-\alpha_{d+1}(p)p^{-s})^{-1}$, 
such that $\abs{\alpha_i(p)} < p^{a}$ for some $a \geq 0$,
for almost all $p$ and finitely many factors  $(1-g_p(p^{-s}) )^{-1}$ with polynomials $g_p$ of degree not exceeding $d+1$.\newline\newline
Thus, we have a correspondence between zeta functions of $S$ which admit 
meromorphic continuation and functional equation of the type expected in 
number theory and mean-periodic functions in certain functional spaces: for each such 
${\mathcal Z}_S(s)$ and every $m\geq m_{{\mathcal Z}_S}$ we get the  function $h_{{\mathcal Z}_S,m}$. \newline\newline
\noindent More generally, the previous theorem can be applied to the class of functions
closed with respect to product and quotient
and generated by the rescaled zeta functions (and $L$-functions) of arithmetic schemes.
\end{remark}
\begin{remark}
Note that the function  $h_{{\mathcal Z}_S,m}$ preserves the information
about poles of the zeta function and essentially about zeros of
the denominator ${\mathcal L}_2(s)$, but not the information about zeros of the zeta function. We can also apply the previous theorem to the function 
${\mathcal Z}_S(s)^{-1}$ then the corresponding 
$h_{{\mathcal Z}_S^{-1}, m}(x)$ will preserve information
about zeros of $\zeta_S(s)$. \newline
\noindent In dimension two the numerator of  the zeta function of a regular model of
a curve over a number field is the product of one-dimensional zeta 
functions, whose meromorphic continuation and functional equation is the one 
dimensional theory. The two-dimensional object is actually the denominator of the zeta 
function and conjectural mean-periodicity of the associated function $h$
 implies  the conjectural meromorphic continuation and functional equation 
of the  denominator of the zeta function. 
One can imagine a more general recursive procedure, applied to zeta 
functions
of arithmetic schemes of dimension $d+1$ assuming the knowledge of meromorphic 
continuation
and functional equation in smaller dimensions. 
\end{remark}
\begin{proof}[\proofname{} of Theorem~\ref{schemess}]
We show only (I), since (II) is a consequence of the general theory of mean-periodicity as above.\newline
\noindent We prove that there exists $m_{\mathcal Z} \in {\Z}$ such that 
for every  $m \geq m_{\mathcal Z}$ 
the function $\Lambda_{\Q}(s)^m \Lambda_{\mathcal Z}((d+1)s)$ and $\Lambda_{\Q}(s)^m \Lambda_{\widetilde{\mathcal Z}}((d+1)s)$ 
are in the class $\mathcal{F}$ (see Definition \ref{def_E}) and then we obtain (I) by applying Theorem \ref{thm_302} and Theorem \ref{thm_303} 
to $\Lambda_{\Q}(s)^m \Lambda_{\mathcal Z}((d+1)s)$ and $\Lambda_{\Q}(s)^m \Lambda_{\widetilde{\mathcal Z}}((d+1)s)$ 
with entire functions 
\begin{eqnarray*}
U_1(s) & = & \Lambda_{\Q}(s)^m P(s)\widehat{{\mathcal L}}_1(s), \\
U_2(s) & = & \Lambda_{\Q}(s)^m \overline{P(\overline{s})}\overline{\widehat{{\mathcal L}}_1(\overline{s})}, \\
V_1(s) & = & P(s)\widehat{{\mathcal L}}_2(s), \\
V_2(s) & = & \overline{P(\overline{s})}\overline{\widehat{{\mathcal L}}_2(\overline{s})}.
\end{eqnarray*}
\noindent By the assumption of the theorem the function  $\widehat{{\mathcal L}}_1(s)$ has finitely many poles. Using the Dirichlet series representation of ${\mathcal L}_2(s)$ we get 
${\mathcal L}_2(\sigma+it)= a_{n_1} n_1^{-\sigma}(1+o(1)))$
as $\Re(s)=\sigma \to \infty$,  for some integer $n_1$ with a non-zero 
Dirichlet coefficient $a_{n_1}$.
Therefore ${\mathcal L}_2(s) \not=0$ in some right-half plane $\Re(s) > c_2 \geq \sigma_1$.
Since $\gamma_2(s)$ does not vanish for $\Re(s)>\sigma_1$,
the function $\widehat{{\mathcal L}}_2(s)$ has no zeros in the right-half 
plane $\Re(s) > c_2 \geq \sigma_1$.
By the functional equation,  $\widehat{{\mathcal L}}_2(s) \not=0$ in the 
left-half plane $\Re(s) < d + 1 -c_2$.
Hence all zeros of $\widehat{{\mathcal L}}_2(s)$ are in the vertical strip 
$d+1-c_2 \leq \Re(s) \leq c_2$.
Thus all poles of $\Lambda_{\Q}(s)^m \Lambda_{\mathcal Z}((d+1)s)$ are in some vertical strip, say $\abs{s-1/2} \leq w$.\newline
\noindent We choose
\begin{eqnarray*}
\gamma(s) & = & \Lambda_{\Q}(s)^m \gamma_1(s) \gamma_2(s)^{-1}, \\
D(s) & = & {\mathcal L}_1(s) {\mathcal L}_2(s)^{-1}
\end{eqnarray*}
as a decomposition of  $\Lambda_{\Q}(s)^m \Lambda_{\mathcal Z}((d+1)s)$ 
in definition \ref{def_E}.\newline
\noindent Using Stirling's formula we have \eqref{313} if $m$ is sufficiently 
large.
Using the Dirichlet series of ${\mathcal L}_i(s)~(i=1,2)$ we have 
\eqref{eq_AC},
if necessary, by replacing the above $w$ by a larger real number. \newline
\noindent For \eqref{314}, we first  prove that
\begin{itemize}
\item ${\mathcal L}_1(s)$ is polynomially bounded in vertical strips $a \leq 
\Re(s) \leq b$ for all  $a \leq b$,
\item there exists a real number $A$ and a strictly increasing sequence of positive real numbers $\{t_m\}$ 
satisfying ${\mathcal L}_2(\sigma \pm it_m)^{-1} \ll t_m^A$ uniformly for $\abs{(d+1)(\sigma -1/2)} \leq 1/2+w+\delta$.
\end{itemize}
The first assertion is obtained by a standard convexity argument. 
The function ${\mathcal L}_1(s)$ is bounded 
in the half-plane $\Re(s) > \sigma_1+\varepsilon$ by the absolute
convergence of the Dirichlet series. 
Hence we have a polynomial bound for ${\mathcal L}_1(s)$
in the half-plane $\Re(s) < d+1-\sigma_1-\varepsilon$ 
by the functional equation and Stirling's formula.
Then the polynomial bound in the remaining strip follows by the 
Phragmen-Lindel\"of principle. From the assumptions for ${\mathcal L}_2(s)$ we have
\begin{itemize}
\item the number of zeros $\rho = \beta + i\gamma$ of ${\mathcal L}_2(s)$ 
such that $\abs{\gamma - T} \leq 1$, say $m(T)$,
satisfies $m(T) \ll \log T$ with an implied constant depends only on 
$\gamma_2(s)$,
\item there exists $c_2^\prime \geq c_2$ such that
\begin{equation*}
\frac{{\mathcal L}_2^\prime(s)}{{\mathcal L}_2(s)}
= \sum_{\abs{t-\gamma}<1} \frac{1}{s-\rho} +O(\log{\abs{t}})
\end{equation*}
for all $s=\sigma+it$ with $d+1-c_2^\prime \leq \sigma \leq c_2^\prime$, $\abs{t} 
\geq t_0$,
where the sum runs over all zeros $\rho=\beta+i\gamma$
such that $d+1-c_2 \leq \beta \leq c_2$ and $\abs{\gamma -t}<1$
\end{itemize}
as an application of Proposition \ref{ap_prop_1}. 
The above two claims allow us to prove the above assertion for ${\mathcal L}_2(s)^{-1}$ as an application of Proposition \ref{lem_601},
if necessary, by replacing $w$ by a larger real number. 
Combining the polynomial bounds for ${\mathcal L}_1(s)$ and ${\mathcal L}_2(s)^{-1}$ we obtain \eqref{314}.\newline
\noindent Hence we find that $\Lambda_{\Q}(s)^m \Lambda_{\mathcal Z}((d+1)s)$ is in $\mathcal F$ if $m$ is sufficiently large. 
Finally, we show \eqref{323} for $U_1(s)$
and $V_1(s)$ for instance, having in mind that the same argument gives \eqref{323} for $U_2(s)$ and $V_2(s)$. Because $\zeta(s)$, $P(s)$ and ${\mathcal L}_1(s)$ are polynomially bounded in every vertical strip, 
the decomposition $U_1(s)=\fGamma_\mathbb{R}^m(s)\gamma_1(s)\cdot\zeta^mP(s){\mathcal L}_1(s)$ and 
Stirling's formula give \eqref{323} for $U_1(s)$. 
Similarly, ${\mathcal L}_2(s)$ is polynomially bounded in every vertical strip. 
By Stirling's formula and $V_1(s)=\gamma_2(s)P(s){\mathcal L}_2(s)$, 
we have \eqref{323} for $V_2(s)$. 
\end{proof}
%
%
%----------------------------------------------------------------------------
\section{Other examples of mean-periodic functions}\label{sec_others}%
%
%.................................................................
\subsection{Dedekind zeta functions and mean-periodicity}\label{sec_5_1}%
In this part we apply the general results in Section 4 to the Dedekind zeta functions.
Let $\K$ be a number field and
\begin{equation*}
Z_{\K}(s)=\frac{\Lambda_{\K}(2s)\Lambda_{\K}(2s-1)}{\Lambda_{\K}(s)},
\end{equation*}
which satisfies the functional equation $Z_{\K}(s)=Z_{\K}(1-s)$.
\begin{proposition} \label{prop_502}
Let $\K$ be a number field. The function
\begin{equation*}
h_{\K}(x)\coloneqq f_{Z_{\K}}(x)-x^{-1}f_{Z_{\K}}(x^{-1})
\end{equation*}
is $\mathcal{C}_{\rm poly}^\infty({\R_+^\times})$-mean-periodic and $\mathbf{S}(\R_+^\times)^*$-mean-periodic,
where $f_{Z_{\K}}$ is the inverse Mellin transform of $Z_{\K}$ defined in \eqref{315}. Moreover, the single sign property for $h_{\K}$ and the non-vanishing of $\Lambda_{\K}$ on the real line imply that all the poles of $Z_{\K}(s)$ lie on the critical line $\Re{(s)}=1/2$.
\end{proposition}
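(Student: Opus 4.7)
The strategy is to derive the mean-periodicity from Theorems~\ref{thm_302} and \ref{thm_303} applied with $Z_1 = Z_2 = Z_{\K}$, and then to obtain the critical-line conclusion from Proposition~\ref{mono} with $\delta = 0$.

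\emph{Membership in $\mathcal F$.} I would decompose $Z_{\K} = \gamma \cdot D$ with archimedean part $\gamma(s) \coloneqq \zeta_{\K,\infty}(2s)\zeta_{\K,\infty}(2s-1)/\zeta_{\K,\infty}(s)$ and $D(s)$ the complementary Dirichlet-series part (absorbing a discriminant power). The simple poles of $\Lambda_{\K}$ at $s=0,1$ produce cancelling contributions at $s=0$ and $s=1$, so the only unconditional pole of $Z_{\K}$ is a double pole at $s=1/2$; every other pole arises from a non-trivial zero of $\Lambda_{\K}$ and therefore lies in $\abs{\Re(s)-1/2}\leq 1/2$, so $w=1/2$ is admissible. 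The decay \eqref{313} of $\gamma$ is Stirling, and the polynomial bound \eqref{eq_AC} on $\Re(s)>1$ follows from absolute convergence of $\zeta_{\K}(2s)$ and $\zeta_{\K}(2s-1)$ there together with $\abs{\zeta_{\K}(s)}\geq c>0$. The decisive condition \eqref{314} requires a sequence $(t_m)$ along which $1/\zeta_{\K}(\sigma \pm it_m)$ is polynomially bounded uniformly for $\sigma \in [-\delta,1+\delta]$: this is supplied by Proposition~\ref{lem_601} combined with the convexity bound \eqref{eq_conv_Zeta} for $\zeta_{\K}(2s)$ and $\zeta_{\K}(2s-1)$.

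\emph{Entire factorisation.} To invoke Theorem~\ref{thm_303} I would take
\begin{equation*}
U(s)\coloneqq s(s-1)(2s-1)^2\,\Lambda_{\K}(2s)\Lambda_{\K}(2s-1),\qquad V(s)\coloneqq s(s-1)(2s-1)^2\,\Lambda_{\K}(s).
\end{equation*}
The prefactor $s(s-1)(2s-1)^2$ precisely annihilates the simple poles of the $\Lambda$-factors, so $U$ and $V$ are entire; both are invariant under $s\mapsto 1-s$ by the functional equation of $\Lambda_{\K}$, which gives $\varepsilon_U=\varepsilon_V=1$ and hence $\varepsilon=1$, consistently with $Z_{\K}(s)=Z_{\K}(1-s)$. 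The exponential decay of the gamma factors inside $\Lambda_{\K}$ dominates the polynomial growth of $\zeta_{\K}$ and of the prefactor in any fixed vertical strip, which yields \eqref{323}. Theorems~\ref{thm_302} and \ref{thm_303} then produce the announced $\mathcal{C}_{\rm poly}^\infty({\R_+^\times})$- and $\mathbf{S}(\R_+^\times)^\ast$-mean-periodicity of $h_{\K}$.

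\emph{Critical-line conclusion.} For the second assertion I would apply Proposition~\ref{mono}. Since $Z_{\K}$ is real on the real axis, $f_{Z_{\K}}$ and $h_{\K}$ are real-valued, and the required symmetry $h_{\K}(x^{-1})=-xh_{\K}(x)$ is built into the definition. Theorem~\ref{thm_rmp} identifies the poles of $\MeCa(h_{\K})(s)$ with those of $Z_{\K}(s)$. The hypothesis that $\Lambda_{\K}$ does not vanish on the real axis rules out real poles of $Z_{\K}$ in $(0,1)$ coming from zeros of $\Lambda_{\K}$; together with the cancellations at $s=0,1$ and the regularity of $Z_{\K}$ for $\Re(s)>1$, this guarantees that $\MeCa(h_{\K})$ has no poles on $(1/2,+\infty)$. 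Proposition~\ref{mono} applied with $\delta=0$ then forces every pole of $Z_{\K}$ onto the line $\Re(s)=1/2$.

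The only genuinely non-routine ingredient is the verification of \eqref{314}: one must select horizontal lines $\Im(s)=\pm t_m$ avoiding the non-trivial zeros of $\zeta_{\K}$ enough to ensure a polynomial bound on $1/\zeta_{\K}$ uniformly across the entire critical strip, and this is exactly what Proposition~\ref{lem_601} provides via a standard Hadamard-product / zero-density argument.
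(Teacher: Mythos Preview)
Your proof is correct and follows the paper's own argument essentially verbatim: the same decomposition $Z_{\K}=\gamma\cdot D$, the same choice $U(s)=s(s-1)(2s-1)^2\Lambda_{\K}(2s)\Lambda_{\K}(2s-1)$ and $V(s)=s(s-1)(2s-1)^2\Lambda_{\K}(s)$ for Theorem~\ref{thm_303}, and the same appeal to Proposition~\ref{mono} using the holomorphy of $Z_{\K}$ at $s=0,1$. The only cosmetic differences are that the paper cites Proposition~\ref{lem_602} (the specialization to $L$-functions in the sense of Iwaniec--Kowalski) rather than the more general Proposition~\ref{lem_601} for condition~\eqref{314}, and that the paper phrases \eqref{eq_AC} via convexity rather than via absolute convergence plus a lower bound on $\abs{\zeta_{\K}(s)}$ for $\Re(s)>1$; both pairs of justifications are equivalent here.
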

\begin{remark}
Equivalently, the function $H_\K(t)\coloneqq h_{\K}(e^{-t})$ is $\mathcal{C}_{\rm exp}^\infty({\R})$-mean-periodic and $\mathbf{S}(\R)^*$-mean-periodic.
\end{remark}
\begin{proof}[\proofname{} of Proposition~\ref{prop_502}]
We can decompose $Z_{\K}(s)$ in $Z_{\K}(s)=\gamma(s)D(s)$ where
\begin{eqnarray*}
\gamma(s) & \coloneqq  & \abs{d_{\K}}^{\frac{3s-1}{2}}
\frac{\zeta_{{\K},\infty}(2s)\zeta_{{\K},\infty}(2s-1)}{\zeta_{{\K},\infty}(s)}, \\
D(s) & \coloneqq  & \frac{\zeta_{\K}(2s)\zeta_{\K}(2s-1)}{\zeta_{\K}(s)}.
\end{eqnarray*}
The function $Z_\K(s)$ belongs to $\mathcal{F}$ since all its poles are in the critical strip $\abs{\Re{(s)}-1/2}\leq 1/2$ ($w=1/2$) and
\begin{itemize}
\item
the estimate \eqref{313} follows from Stirling's formula namely
\begin{equation*}
\forall\sigma\in[a,b], \forall \abs{t}\geq t_0,\quad\gamma(\sigma+it)\ll_{a,b,t_0}
\left(\abs{t}^{\frac{3}{2}\sigma-1}e^{-\frac{3\pi}{4}\abs{t}}\right)^{r_1}\left(\abs{t}^{3\sigma-\frac{3}{2}}e^{-\frac{3\pi}{2}\abs{t}}\right)^{r_2}
\end{equation*}
for all  real numbers $a\leq b$,
\item
the estimate \eqref{eq_AC} follows from classical convexity bounds for Dedekind zeta functions given in \eqref{eq_conv_Zeta},
\item
the crucial condition \eqref{314} is an application of Proposition \ref{lem_602} and the convexity bounds for the Dedekind zeta function given in \eqref{eq_conv_Zeta}.
\end{itemize}
Adopting the same notations as in Theorem \ref{thm_303}, we choose $U_1(s)=U_2(s)=U(s)$ and $V_1(s)=V_2(s)=V(s)$ where
\begin{eqnarray*}
U(s) & \coloneqq  & s(s-1)(2s-1)^2\Lambda_{\K}(2s)\Lambda_{\K}(2s-1),  \\
V(s) & \coloneqq  & s(s-1)(2s-1)^2\Lambda_{\K}(s).
\end{eqnarray*}
$U(s)$ and $V(s)$ are some entire functions satisfying the functional equations
\begin{eqnarray*}
U(s) & = & U(1-s), \\
V(s) & = & V(1-s).
\end{eqnarray*}
The estimate \eqref{323} is a consequence of Stirling's formula and convexity bounds for Dedekind zeta functions given in \eqref{eq_conv_Zeta}. Mean-periodicity of $h_\K$ follows from Theorem \ref{thm_302} and Theorem \ref{thm_303}. The final assertion of the proposition is a consequence of Proposition \ref{mono}, since $Z_{\K}(s)$ is holomorphic at $s=0,1$.
\end{proof}

\begin{proposition} \label{prop_ss} 
Assume that all the zeros of $\Lambda_{\K}(s)$ lie on the line $\Re(s)=1/2$ and that all the non-real zeros are simple. If
\begin{equation} \label{est}
\sum_{\substack{\Lambda_{\K}(1/2+i\gamma)=0 \\
0<\gamma<T}}\abs{\zeta_{\K}(1/2+i\gamma)}^{-1}\ll_A e^{AT}
\end{equation}
for any positive real numbers $A>0$ then the function $h_{\K}(x)$ defined in Proposition \ref{prop_502} has a single sign on $(0,x_0)$ for some $x_0>0$.
\end{proposition}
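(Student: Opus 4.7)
The plan is to apply the residue expansion \eqref{eq_series} established in the proof of Theorem \ref{thm_302} to $h_\K$, isolate the dominant real-analytic term as $x\to 0^+$, and show that the remaining oscillatory contributions are bounded via Stirling's formula together with the hypothesis on residues. (I interpret the hypothesis as bounding $\sum_{0<\gamma<T}\abs{\zeta_\K'(1/2+i\gamma)}^{-1}$, since the sum of $\abs{\zeta_\K(1/2+i\gamma)}^{-1}$ over zeros of $\zeta_\K$ would be ill-defined, and I assume $\Lambda_\K(1/2)\neq 0$, consistently with Proposition~\ref{prop_502}.) Under these assumptions, the poles of $Z_\K(s)=\Lambda_\K(2s)\Lambda_\K(2s-1)/\Lambda_\K(s)$ are exactly a double pole at $s=1/2$ and a simple pole at each $\rho=1/2+i\gamma$ with $\gamma\neq 0$; the apparent poles at $s=0$ and $s=1$ cancel since $\Lambda_\K(2s)/\Lambda_\K(s)\to 1/2$ at $s=0$ and $\Lambda_\K(2s-1)/\Lambda_\K(s)\to 1/2$ at $s=1$.

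Inserting this pole structure into \eqref{eq_series} and setting $t=-\log x$ yields
\[
h_\K(e^{-t})=e^{t/2}\Bigl[C_1(1/2)+C_2(1/2)\,t+\sum_{\gamma\neq 0}C_1(\rho)\,e^{i\gamma t}\Bigr],
\]
where $C_2(1/2)\neq 0$ is the coefficient of $(s-1/2)^{-2}$ in the Laurent expansion of $Z_\K$ at $s=1/2$ (a short computation gives $C_2(1/2)=-R^2/(4\Lambda_\K(1/2))$ with $R=\mathrm{Res}_{s=1}\Lambda_\K(s)$), and
\[
C_1(\rho)=\frac{\Lambda_\K(2\rho)\Lambda_\K(2\rho-1)}{\Lambda_\K'(\rho)}=\frac{\abs{\Lambda_\K(1+2i\gamma)}^2}{\Lambda_\K'(\rho)},
\]
using the functional equation $\Lambda_\K(2i\gamma)=\overline{\Lambda_\K(1+2i\gamma)}$. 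It therefore suffices to prove that $S(t)\coloneqq\sum_{\gamma\neq 0}C_1(\rho)e^{i\gamma t}$ is uniformly bounded in $t$, for then the bracket grows like $C_2(1/2)\,t$ and acquires a definite sign for $t$ sufficiently large, which translates into $h_\K(x)$ having a single sign on $(0,x_0)$ for some $x_0>0$.

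Uniform boundedness reduces to absolute convergence of $\sum_{\gamma\neq 0}\abs{C_1(\rho)}$, and this is where the hypothesis enters. Stirling's formula applied to the archimedean factors shows that the ratio $\abs{\Lambda_\K(1+2i\gamma)}^2/\abs{\zeta_{\K,\infty}(\rho)}$ carries an exponential factor $e^{-\alpha\abs{\gamma}}$ with $\alpha=\pi(3r_1/4+3r_2/2)>0$, while $\abs{\zeta_\K(1+2i\gamma)}$ is at most polynomial by convexity; since $\Lambda_\K'(\rho)=\abs{d_\K}^{\rho/2}\zeta_{\K,\infty}(\rho)\zeta_\K'(\rho)$ (the other term vanishing because $\Lambda_\K(\rho)=0$), one obtains
\[
\abs{C_1(\rho)}\ll\abs{\gamma}^{C}\,e^{-\alpha\abs{\gamma}}\,\abs{\zeta_\K'(\rho)}^{-1}
\]
for some $C>0$. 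A dyadic decomposition over $\gamma\in(2^{k-1},2^k]$, combined with the hypothesis applied with any fixed $A<\alpha$, then produces a geometrically convergent bound for $\sum_\gamma\abs{C_1(\rho)}$. The main obstacle is precisely this Stirling bookkeeping: one must verify that the exponential decay inherited from the gamma factors in the numerator genuinely dominates the tolerated exponential growth $e^{AT}$ in the denominator estimate, which succeeds thanks to the flexibility of choosing $A$ arbitrarily small in the hypothesis.
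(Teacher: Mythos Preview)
Your proposal is correct and follows essentially the same route as the paper's proof: both apply the pole expansion \eqref{eq_series} to $Z_\K$, extract the $x^{-1/2}\log x$ term from the double pole at $s=1/2$ (with nonzero coefficient $C_\K^2/(4\Lambda_\K(1/2))$), and control the oscillatory remainder by showing $\sum_{\gamma}\abs{C_1(\rho)}<\infty$ via Stirling's formula on the gamma-factor ratio together with the hypothesis on $\sum\abs{\zeta_\K'(1/2+i\gamma)}^{-1}$. Your identification of the typo ($\zeta_\K'$ rather than $\zeta_\K$) and your explicit value $\alpha=\pi(3r_1/4+3r_2/2)$ for the exponential decay are both on target; the paper phrases the tail bound as $\sum_{\abs{\gamma}>T}\abs{c_\gamma}\ll_N T^{-N}$ rather than via dyadic blocks, but this is a cosmetic difference.
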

\noindent
\begin{remark} In the case of the Riemann zeta function, it is conjectured that
\begin{equation*}
\sum_{0<\gamma<T} \abs{\zeta(1/2+i\gamma)}^{-2k} \ll_k T(\log{}T)^{(k-1)^2}
\end{equation*}
for any real $k\in\R$ by Gonek~\cite{MR1014202} and Hejhal~\cite{MR993326}, under the Riemann hypothesis and assuming that all the zeros of $\zeta(s)$ are simple. Assumption \eqref{est} is quite weaker than this conjectural estimate. 
\end{remark}
\begin{proof}[\proofname{} of Proposition~\ref{prop_ss}] 
Suppose that $\Lambda_{\K}(1/2)\not=0$ for simplicity. The case $\Lambda_{\K}(1/2)=0$ is proved by a similar argument.  $Z_{\K}(s)$ has a double pole at $s=1/2$, and all the other poles are simple. Applying Theorem \ref{thm_302} to $Z_{\K}(s)$ we have
\begin{equation*}
x^{1/2} h_{\K}(x) =
c_1 \log x + c_0 + \lim_{T \to \infty} \sum_{ 0<\abs{\gamma}\leq T} c_\gamma x^{-i\gamma},
\end{equation*}
by \eqref{eq_series}, where $\{\gamma\}$ is the set of all imaginary parts of the zeros of $\Lambda_{\K}(s)$. Here $c_0$ and $c_1$ are given by 
\begin{equation*}
c_1 \log x + c_0 = \frac{C_{\K}^2}{4\Lambda_{\K}(1/2)^2}\left( \Lambda_{\K}(1/2) \log x + \Lambda_{\K}^\prime(1/2) \right),
\end{equation*}
where
\begin{equation*}
\Lambda_{\K}(s) = \frac{C_{\K}}{s-1} + A_{\K} + O(s-1) =  -\frac{C_{\K}}{s} + A_{\K} + O(s). 
\end{equation*}
Hence, in particular, $c_1$ is a non-zero real number. As for $c_\gamma$, we have
\begin{equation*}
c_\gamma = \frac{\Lambda_{\K}(2i\gamma)\Lambda_{\K}(1+2i\gamma)}{\zeta_{\K,\infty}(1/2+i\gamma)\zeta_{\K}^\prime(1/2+i\gamma)}=
\frac{\zeta_{\K,\infty}(2i\gamma)\zeta_{\K,\infty}(1+2i\gamma)}{\zeta_{\K,\infty}(1/2+i\gamma)}
\frac{\zeta_{\K}(2i\gamma)\zeta_{\K}(1+2i\gamma)}{\zeta_{\K}^\prime(1/2+i\gamma)}.
\end{equation*}
Using Stirling's formula, assumption \eqref{est} and the convex bound \eqref{eq_conv_Zeta} of $\zeta_{\K}(s)$, we have
\begin{equation*}
\sum_{\abs{\gamma}>T}\abs{c_\gamma}\ll_N   T^{-N}
\end{equation*}
for any positive real number $N>0$. Hence we have
\begin{equation*}
\left\vert\lim_{T \to \infty} \sum_{ 0<\abs{\gamma}\leq T} c_\gamma x^{-i\gamma}\right\vert\leq
\lim_{T \to \infty} \sum_{ 0<\abs{\gamma}\leq T}\abs{c_\gamma}=O(1)
\end{equation*}
uniformly for every $x \in (0,\infty)$. Thus
\begin{equation*}
x^{1/2} h_{\K}(x) = c_1 \log x +O(1) 
\end{equation*}
with $c_1\neq 0$ and for all $x \in (0,\infty)$. This implies $h_{\K}(x)$ has a single sign near $x=0$. 
\end{proof}
%
%............................................................................
\subsection{Cuspidal automorphic forms and mean-periodicity}\label{sec_cusp}%
The following proposition provides some evidence for the fact that mean-periodic functions also appear in the context of automorphic cuspidal representations. The analytic background is available in \cite{MR2331346}. Let us say that if $\pi$ is an automorphic cuspidal irreducible representation of $GL_{m}(\A_\Q)$ with unitary central character then its completed $L$-function $\Lambda(\pi,s)$ satisfies the functional equation
\begin{equation*}
\Lambda(\pi,s)=\epsilon_{\pi}\Lambda(\widetilde{\pi},s)
\end{equation*}
where $\epsilon_{\pi}$ is the sign of the functional equation and $\widetilde{\pi}$ is the contragredient representation of $\pi$.
\begin{proposition}\label{prop_mp_auto}
Let $\varpi_1,\cdots,\varpi_n$ in $\left\{\pm1\right\}$ and $\pi_1,\cdots,\pi_n$ some automorphic cuspidal irreducible representations of $GL_{m_1}(\A_\Q),\cdots,GL_{m_n}(\A_\Q)$ with unitary central characters. There exists an integer $m_0\geq 0$ such that for every  integer $m\geq m_0$, the function
\begin{equation*}
h_m(x)\coloneqq f_{Z_m}(x)-\left(\prod_{i=1}^n\epsilon_{\pi_i}^{\varpi_i}\right)x^{-1}f_{\widetilde{Z}_m}\left(x^{-1}\right)
\end{equation*}
is $\mathcal{C}_{\rm poly}^\infty({\R_+^\times})$-mean-periodic and $\mathbf{S}(\R_+^\times)^*$-mean-periodic, where
\begin{eqnarray*}
Z_m(s) & \coloneqq  & \Lambda_\Q(s)^m\prod_{i=1}^n\Lambda(\pi_i,s)^{\varpi_i}, \\
\widetilde{Z}_m(s) & \coloneqq  & \Lambda_\Q(s)^m\prod_{i=1}^n\Lambda(\widetilde{\pi}_i,s)^{\varpi_i}, \\
\end{eqnarray*}
and $f_{Z_m}$ (respectively $f_{\widetilde{Z}_m}$) is the inverse Mellin transform of $Z_m$ (respectively $\widetilde{Z}_m$) defined in \eqref{315}.
\end{proposition}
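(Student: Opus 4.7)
The plan is to imitate the proofs of Theorem~\ref{prop_HW1}, Theorem~\ref{prop_H1} and Theorem~\ref{schemess} by exhibiting an integer $m_0$ such that, for every $m\geq m_0$, both $Z_m$ and $\widetilde{Z}_m$ belong to the class $\mathcal{F}$ of Definition~\ref{def_E} and admit an $\mathcal{F}$-compatible factorization $U/V$ of the type required by Theorem~\ref{thm_303}. The conclusion then drops out of Theorem~\ref{thm_302} and Theorem~\ref{thm_303} applied with $\varepsilon=\prod_i\epsilon_{\pi_i}^{\varpi_i}$; the required functional equation $Z_m(s)=\varepsilon\widetilde{Z}_m(1-s)$ is a termwise consequence of $\Lambda_{\Q}(s)=\Lambda_{\Q}(1-s)$ together with $\Lambda(\pi_i,s)=\epsilon_{\pi_i}\Lambda(\widetilde{\pi}_i,1-s)$.

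To check $\mathcal{F}$-membership, decompose $Z_m(s)=\gamma_m(s)D_m(s)$ with $\gamma_m(s)$ gathering $\zeta_{\Q,\infty}(s)^m$ together with the archimedean factors of $\prod_i\Lambda(\pi_i,s)^{\varpi_i}$, and $D_m(s)$ the corresponding product of finite parts. All poles of $Z_m$ come from the simple poles of $\Lambda_{\Q}(s)$ and lie in the strip $\abs{\Re(s)-1/2}\leq 1/2$. The estimate \eqref{313} follows from Stirling: choosing $m_0:=\sum_{\varpi_i=-1}m_i$, the exponential decay of the $m$ real-gamma factors of $\Lambda_{\Q}(s)^m$ swamps the polynomial growth produced by the reciprocals of the archimedean factors of $\Lambda(\pi_i,s)$ with $\varpi_i=-1$ as soon as $m\geq m_0$. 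Estimate \eqref{eq_AC} is the absolute convergence of the Dirichlet series of the $L(\pi_i,s)$ on a common right half-plane together with their non-vanishing there.

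The serious point is the sequence condition \eqref{314}: one must produce increasing heights $t_j\to+\infty$ along which $\abs{D_m(\sigma\pm it_j)}$ is polynomially bounded uniformly in a slightly widened critical strip. For each $i$ with $\varpi_i=-1$ this reduces to a polynomial bound on $\abs{L(\pi_i,\sigma\pm it_j)^{-1}}$, and it is this simultaneous zero-avoidance that is the main obstacle, exactly as in Theorem~\ref{schemess}. I would invoke Proposition~\ref{lem_601} of the appendix, whose hypotheses (the standard zero-count $N_{\pi_i}(T+1)-N_{\pi_i}(T)\ll\log T$ for automorphic $L$-functions and the Hadamard-type expansion of $L'/L$) are known for cuspidal $L$-functions; this yields a set of heights of positive density on which the required reciprocal bound holds. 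Intersecting over the finitely many $i$ with $\varpi_i=-1$ produces a single sequence $(t_j)$ valid for $D_m$, and the same argument applies to $\widetilde{Z}_m$.

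Finally, to satisfy the hypotheses of Theorem~\ref{thm_303}, pick a polynomial $P(s)=P(1-s)$ killing the remaining poles and set $U_1(s):=P(s)\Lambda_{\Q}(s)^m\prod_{\varpi_i=+1}\Lambda(\pi_i,s)$ and $V_1(s):=P(s)\prod_{\varpi_i=-1}\Lambda(\pi_i,s)$, with $U_2,V_2$ defined symmetrically using the contragredients $\widetilde{\pi}_i$. These four functions are entire; they satisfy the functional equations \eqref{321}--\eqref{322} with signs $\varepsilon_U=\prod_{\varpi_i=+1}\epsilon_{\pi_i}$ and $\varepsilon_V=\prod_{\varpi_i=-1}\epsilon_{\pi_i}$, so that $\varepsilon_U\varepsilon_V^{-1}=\varepsilon$; and the bound \eqref{323} is once more Stirling combined with the convexity bounds for each $\Lambda(\pi_i,s)$. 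Everything outside the third paragraph is bookkeeping built on the one-dimensional analytic estimates of the appendix; the crux of the argument is the appendix-level control of the zeros of several automorphic $L$-functions along a common sequence of heights.
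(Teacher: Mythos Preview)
Your proposal is correct and matches the paper's intended route exactly: the paper in fact omits the proof, remarking only that it is ``an immediate application of Theorems~\ref{thm_302} and~\ref{thm_303}'' together with the convexity bounds for $GL_n$ $L$-functions and Proposition~\ref{lem_602} (your use of Proposition~\ref{lem_601} is equivalent here). One small correction: you say that all poles of $Z_m$ come from the simple poles of $\Lambda_{\Q}(s)$, but for each $i$ with $\varpi_i=-1$ the zeros of $\Lambda(\pi_i,s)$ also contribute poles to $Z_m$; since these zeros lie in $0\leq\Re(s)\leq 1$ by Jacquet--Shalika and the functional equation, your strip conclusion $\abs{\Re(s)-1/2}\leq 1/2$ is unaffected.
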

\begin{remark}
Equivalently, the function $H_m(t)=h_m(e^{-t})$ is $\mathbf{S}(\R)^*$-mean-periodic and $\mathcal{C}_{\rm exp}^\infty({\R})$-mean-periodic.
\end{remark}
\begin{remark}
We would like to focus on the fact that, unlike for zeta-functions of schemes, the objects are not necessarily self-dual but the general results on mean-periodicity proved in Section \ref{sec_results} are still applicable in this context.
\end{remark}
\begin{remark}
The proof of the previous proposition is omitted since it is an immediate application of Theorems \ref{thm_302} and \ref{thm_303}. Proving that $Z_m(s)$ belongs to $\mathcal{F}$ requires the \emph{convexity bounds} for general $L$-functions of $GL_n$ given in \cite[Section 1.3]{MR2331346} and, 
of course, the use of Proposition \ref{lem_602}.
\end{remark}
%
%..................................................................................
\subsection{Eisenstein series and mean-periodicity}\label{sec_Eis}%
In this section, we construct several continuous families of mean-periodic functions, the main tool being Eisenstein series. For simplicity, let us restrict ourselves to $\K=\Q$. Let $\mathfrak{h}$ be the upper-half plane. The non-holomorphic Eisenstein series attached to the full modular group
$\gGamma={\rm PSL}(2,{\Z})$ is defined by
\begin{equation*}
\widehat{E}(\tau,s)= \Lambda_{\Q}(2s)E(\tau,s)=\Lambda_{\Q}(2s)\sum_{\left( \smallmatrix \ast & \ast \\ c & d \endsmallmatrix\right)\in\gGamma_\infty\backslash\gGamma}\frac{y^s}{\abs{c\tau+d}^{2s}}
\end{equation*}
for $\tau=x+iy \in \mathfrak{h}$ and $\Re{(s)}>1$, where $\gGamma_\infty=\left\{\left(\smallmatrix \ast & \ast \\ 0 & \ast \endsmallmatrix \right)\right\} \cap \Gamma$. For a fixed $\tau \in \mathfrak{h}$,
$\widehat{E}(\tau,s)$ has a meromorphic continuation to $\C$ with simple poles at $s=0,1$ and satisfies the functional equation
$\widehat{E}(\tau,s)= \widehat{E}(\tau, 1-s)$. Fix a $\tau \in \mathfrak{h}$, then 
\begin{eqnarray*}
Z_{\Q}(\tau,s) & \coloneqq  & \frac{\widehat{E}(\tau,s)}{\Lambda_{\Q}(s)},\\
Z_{\Q}^{\vee}(\tau,s) & \coloneqq  & \frac{\Lambda_{\Q}(2s)\Lambda_{\Q}(2s-1)}{\widehat{E}(\tau,s)}
\end{eqnarray*}
satisfy the functional equations $Z_{\Q}(\tau,s)=Z_{\Q}(\tau,1-s)$ and $Z_{\Q}^{\vee}(\tau,s)=Z_{\Q}^{\vee}(\tau,1-s)$.
\begin{proposition} \label{prop_503}
The functions
\begin{eqnarray*}
h_{\Q}(\tau,x) & \coloneqq  & f_{Z_{\Q}(\tau,.)}(x)-x^{-1}f_{Z_{\Q}(\tau,.)}\left(x^{-1}\right), \\
h_{\Q}^{\vee}(\tau,x) & \coloneqq  & f_{Z_{\Q}^{\vee}(\tau,.)}(x)-x^{-1}f_{Z_{\Q}^{\vee}(\tau,.)}\left(x^{-1}\right)
\end{eqnarray*}
are $\mathcal{C}_{\rm poly}^\infty({\R_+^\times})$-mean-periodic and $\mathbf{S}(\R_+^\times)^*$-mean-periodic, where $f_{Z_{\Q}(\tau,.)}$ (respectively $f_{Z_{\Q}^\vee(\tau,.)}$) is the inverse Mellin transform of $Z_{\Q}(\tau,s)$ (respectively $Z_{\Q}^\vee(\tau,s)$) defined in \eqref{315}. Moreover, if $x\mapsto h_{\Q}(\tau,x)$ does not identically vanish then the single sign property for $h_{\Q}(\tau,x)$ implies that all the poles of $Z_{\Q}(\tau,s)$ lie on the critical line $\Re{(s)}=1/2$.
\end{proposition}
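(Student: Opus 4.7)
The strategy mirrors the proof of Proposition~\ref{prop_502}: apply Theorems~\ref{thm_302} and~\ref{thm_303} to $Z_{\Q}(\tau,s)$ and $Z_{\Q}^\vee(\tau,s)$ to obtain mean-periodicity, and then deduce the single-sign statement from Proposition~\ref{mono}. Both functions are self-dual with $\epsilon=1$, so in the notation of those theorems one takes $Z_1=Z_2$, $U_1=U_2$, $V_1=V_2$.

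For $Z_{\Q}(\tau,s)=\widehat{E}(\tau,s)/\Lambda_{\Q}(s)=\bigl(\Lambda_{\Q}(2s)/\Lambda_{\Q}(s)\bigr)E(\tau,s)$ I write $Z_{\Q}(\tau,s)=\gamma(s)D(s)$ with
\[
\gamma(s)=\pi^{-s/2}\,\Gamma(s)/\Gamma(s/2),\qquad D(s)=\zeta(2s)E(\tau,s)/\zeta(s).
\]
Stirling's formula gives exponential decay of $\gamma(s)$ in every vertical strip, so \eqref{313} holds; condition \eqref{eq_AC} follows from the absolute convergence of $\zeta(2s)/\zeta(s)$ and the polynomial growth of $E(\tau,\cdot)$ in vertical strips (visible from its Fourier expansion); and \eqref{314} follows from Proposition~\ref{lem_601} combined with the convexity bound \eqref{eq_conv_Zeta} for $\zeta$, exactly as in the proof of Proposition~\ref{prop_502}. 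Hence $Z_{\Q}(\tau,\cdot)\in\mathcal{F}$. For Theorem~\ref{thm_303} I take
\[
U(s)=s(s-1)(2s-1)^{2}\widehat{E}(\tau,s),\qquad V(s)=s(s-1)(2s-1)^{2}\Lambda_{\Q}(s).
\]
Both are entire, since the simple poles of $\widehat{E}(\tau,s)$ and of $\Lambda_{\Q}(s)$ at $s=0,1$ are killed by the factor $s(s-1)$; each is invariant under $s\mapsto 1-s$; and exponential decay from the gamma factors makes \eqref{323} immediate. Theorems~\ref{thm_302} and~\ref{thm_303} then give the mean-periodicity of $h_{\Q}(\tau,\cdot)$ in the required spaces.

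The same scheme handles $Z_{\Q}^\vee(\tau,s)$. Simplifying with $\widehat{E}(\tau,s)=\Lambda_{\Q}(2s)E(\tau,s)$ yields $Z_{\Q}^\vee(\tau,s)=\Lambda_{\Q}(2s-1)/E(\tau,s)$, and I decompose it as $\gamma^\vee(s)D^\vee(s)$ with $\gamma^\vee(s)=\pi^{1/2-s}\Gamma(s-1/2)$ and $D^\vee(s)=\zeta(2s-1)/E(\tau,s)$. The estimate \eqref{313} is again Stirling, and \eqref{eq_AC} follows from $E(\tau,s)\sim y^{s}$ uniformly in $\Im(s)$ as $\Re(s)\to+\infty$. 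For the entire decomposition I take
\[
U^\vee(s)=s(s-1)(2s-1)^{2}\Lambda_{\Q}(2s)\Lambda_{\Q}(2s-1),\qquad V^\vee(s)=s(s-1)(2s-1)^{2}\widehat{E}(\tau,s);
\]
the double pole of $\Lambda_{\Q}(2s)\Lambda_{\Q}(2s-1)$ at $s=1/2$ is killed by $(2s-1)^{2}$ and the simple poles at $s=0,1$ by $s(s-1)$, so $U^\vee$ is entire, and likewise for $V^\vee$; both are invariant under $s\mapsto 1-s$ and decay exponentially in vertical strips, so \eqref{323} holds. The delicate point, which I expect to be the main obstacle, is condition \eqref{314} for $D^\vee(s)$: it requires a polynomial lower bound for $|\widehat{E}(\tau,\sigma\pm it_{m})|$ on a sequence $t_{m}\to+\infty$, uniformly in $|\sigma-1/2|\leq w+\delta$. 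Since $s(s-1)\widehat{E}(\tau,s)$ is entire of order one, Jensen's formula yields the $O(T\log T)$ bound on the zero-counting function of $\widehat{E}(\tau,\cdot)$, and the log-derivative expansion supplied by Proposition~\ref{ap_prop_1} allows one to select a sequence $\{t_{m}\}$ with spacing $\gg 1/\log t_{m}$ that keeps a uniform distance from every zero, yielding the required polynomial lower bound.

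For the last assertion I identify the poles of $Z_{\Q}(\tau,s)$. A direct residue computation shows that the simple poles of $\widehat{E}(\tau,s)$ and of $\Lambda_{\Q}(s)$ at $s=0$ and $s=1$ cancel exactly (both residues being non-zero), so every pole of $Z_{\Q}(\tau,s)$ is a non-trivial zero of $\zeta(s)$ and thus lies in the critical strip $0<\Re(s)<1$. Because $\zeta(s)$ has no real zero in $(0,1)$, $Z_{\Q}(\tau,s)$ has no pole on the ray $(1/2,+\infty)$. Theorem~\ref{thm_rmp} identifies the poles of $\MeCa(h_{\Q}(\tau,\cdot))(s)$ with those of $Z_{\Q}(\tau,s)$, so Proposition~\ref{mono} applied with $\delta=0$ forces all of them onto the critical line $\Re(s)=1/2$, which is the claim.
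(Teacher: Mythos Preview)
Your proposal is correct and follows essentially the same route as the paper's proof: verify that $Z_{\Q}(\tau,\cdot)$ and $Z_{\Q}^\vee(\tau,\cdot)$ lie in $\mathcal{F}$, apply Theorems~\ref{thm_302} and~\ref{thm_303} with the natural entire numerator/denominator splittings, and conclude the single-sign statement from Proposition~\ref{mono} using regularity of $Z_{\Q}(\tau,s)$ at $s=0,1$ and the non-vanishing of $\Lambda_{\Q}(s)$ on the real line. Your decomposition $\gamma^\vee(s)=\Gamma_{\R}(2s-1)$, $D^\vee(s)=\zeta(2s-1)/E(\tau,s)$ is exactly the paper's, and you correctly isolate condition~\eqref{314} for $D^\vee$ as the one nontrivial step, handling it via the order-one property of $s(s-1)\widehat{E}(\tau,s)$ and the log-derivative/zero-counting machinery of Proposition~\ref{ap_prop_1} adapted to $\widehat{E}(\tau,s)$---precisely what the paper does (it spells out the lattice representation $E(\tau,s)=y^s\sum_{k\geq 1}N_\tau(k)c_\tau(k)^{-s}$ to justify non-vanishing and boundedness of $E'/E$ in a right half-plane, which underpins your asymptotic $E(\tau,s)\sim y^s$). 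One small caution: your claim that $E(\tau,\cdot)$ has polynomial growth in \emph{all} vertical strips is not quite right, since its constant-term ratio $\Lambda_{\Q}(2s-1)/\Lambda_{\Q}(2s)$ blows up at the zeros of $\zeta(2s)$; what is polynomially bounded in the critical strip is the product $\zeta(2s)E(\tau,s)$ (equivalently $\widehat{E}(\tau,s)$ up to gamma factors), which is exactly what enters your $D(s)$ and suffices for~\eqref{314}.
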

\begin{proof}[\proofname{} of Proposition~\ref{prop_503}]%
Let us only prove that both $Z_{\Q}(\tau,s)$ and $Z_{\Q}^{\vee}(\tau,s)$ belong to $\mathcal{F}$ since the results are an application of Theorem \ref{thm_302}, Theorem \ref{thm_303} and Proposition \ref{mono}. Note that $\Z_{\Q}(\tau,s)$ is regular at $s=0,1$ and $\Lambda_{\Q}(s)\not=0$ on the real line. Let us focus on $Z_{\Q}^{\vee}(\tau,s)$ since the case of $Z_{\Q}(\tau,s)$ is very similar to the case of $Z_{\Q}(s)$. We can decompose  in $Z_{\Q}^{\vee}(\tau,s)=\gamma(\tau,s)D(\tau,s)$ where
\begin{eqnarray*}
\gamma(\tau,s) & \coloneqq  & \Gamma_{\R}(2s-1), \\
D(\tau,s) & \coloneqq  & \frac{\zeta_{\Q}(2s-1)}{E(\tau,s)}.
\end{eqnarray*}
By Stirling's formula, we have
\begin{equation*}
\gamma_{\Q}^{\vee}(\tau,s)\ll_{a,b,t_0}\abs{t}^{\sigma-1} e^{-\frac{\pi}{2}\abs{t}}
\end{equation*}
for all  real numbers $a \leq b$, all  $\sigma$ in $[a,b]$ and all  $\abs{t}\geq t_0$.
For a fixed $\tau \in \mathfrak{h}$, ${\Z}+{\Z}\tau$ is a lattice in $\C$.
Thus, the image of $(m,n)\mapsto\abs{m\tau+n}^2$ is discrete in ${\R}_+^\times \cup \{0\}$. 
We arrange the distinct values of its image as
\begin{equation*}
0=c_\tau(0) < c_\tau(1) < c_\tau(2) < \dots \to \infty,
\end{equation*}
and define
$N_\tau(k)=\abs{\left\{(m,n)\in\Z\times\Z, \abs{m\tau+n}^2 = c_\tau(k)\right\}}$.
Then,
\begin{equation*}
E(\tau,s)=y^{s} \sum_{k \geq 1} \frac{N_\tau(k)}{c_\tau(k)^{s}}
= N_\tau(1) (y \, c_\tau(1)^{-2})^s (1+o(1))
\end{equation*}
as $\Re{(s)}\to\infty$ since $N_\tau(k) \ll_\tau c_\tau(k)$ uniformly for all  fixed $\tau \in \mathfrak{h}$, and $\sum_{c_\tau(k) \leq T} c_\tau(k) \ll_\tau T^2$. 
Thus, $E(\tau,s)\not=0$ for $\Re(s) \gg_\tau 0$, and $E(\tau,s)^{-1}$ is uniformly bounded in every vertical strip contained in some right-half plane.
In other words, there exists $\sigma_\tau \geq 1$ such that
\begin{equation*}
D_{\Q}^{\vee}(\tau,\sigma+it) \ll \abs{t}^{A_1}
\end{equation*}
uniformly in every vertical strip contained in the right-half plane $\Re(s) > \sigma_\tau$ for some real number $A_1$. In addition, we have
\begin{equation*}
\frac{E^\prime(\tau,s)}{E(\tau,s)} = y^s \, E(\tau,s)^{-1}
\sum_{k \geq 1} \frac{N_\tau(k)\left( \log y -\log c_\tau(k) \right) }{c_\tau(k)^{s}}
\end{equation*}
for $\Re(s) \gg_\tau 0$, where $E^\prime(\tau,s) = \frac{d}{ds}E(\tau,s)$.
Hence $E^\prime(\tau,s)/E(\tau,s)$ is bounded in every vertical strip contained in the right-half plane $\Re(s) \geq \sigma_\tau^\prime \geq \sigma_\tau$.
Now we obtain
\begin{equation*}
\frac{E^\prime(\tau,s)}{E(\tau,s)}
= \sum_{\substack{E(\tau,\rho)=E(\tau,\beta+i\gamma)=0 \\
1-\sigma_\tau < \beta < \sigma_\tau \\
\abs{t-\gamma}<1}}\frac{1}{s-\rho}+O_\tau(\log t)
\end{equation*}
uniformly for $-\sigma_\tau^\prime\leq \sigma \leq 1 + \sigma_\tau^\prime$ and $t \geq 2$,
and
\begin{equation*}
\sum_{\substack{E(\tau,\rho)=E(\tau,\beta+i\gamma)=0 \\
1-\sigma_\tau < \beta < \sigma_\tau \\
\abs{t-\gamma}<1}}1 = O_\tau(\log t)
\end{equation*}
with an implied constant depending only on $\tau$
following the same lines as used  in the proof of Proposition \ref{lem_601},
since $s(s-1)\widehat{E}(\tau,s)$ is an entire function of order one
~\footnote{$\widehat{E}(\tau,s)$ is not a $L$-function in the sense of \cite{IK},
but the proof of \cite[Proposition 5.7]{IK} only requires that
$(s(s-1))^r L(f,s)$ is an entire function of order one and $L^\prime/L(s)$ is bounded
with respect to the conductor in every vertical strip contained in some right-half plane.}. 
These two facts entail the existence of a sequence of positive real numbers$\{t_m\}_{m \geq 1}$ and of some real number $A$ such that
$E(\tau,\sigma+it_m)^{-1} = O_\tau(t_m^A)$
uniformly for $-\sigma_\tau^\prime \leq \sigma \leq 1 + \sigma_\tau^\prime$.
Hence there exists a sequence of positive real numbers $\{t_m\}_{m \geq 1}$ and a real number $A_2$ such that
\begin{equation*}
\abs{D_{\Q}^{\vee}(\tau,\sigma+it_m)}\ll_\tau t_m^{A_2}
\end{equation*}
uniformly for $\sigma \in [-\sigma_\tau^\prime,1+\sigma_\tau^\prime]$ and for every  integer $m \geq 1$.
The above estimates imply $Z_{\Q}^{\vee}(\tau,s)$ is an element of $\mathcal{F}$.
\end{proof}
\begin{remark}
If $\tau \in \mathfrak{h}$ is a generic point then ${\mathsf{MC}}(h_{\Q}(\tau,x))$ is expected to have infinitely many poles whereas if $\tau \in \mathfrak{h}$ is a special point then ${\mathsf{MC}}(h_{\Q}(\tau,x))$ does not have  poles since
\begin{equation*}
E(\tau,s)= a b^s \zeta_{\Q}(s)L\left(s,\left(\frac{D}{\cdot}\right)\right)
\end{equation*}
for some positive real numbers $a$, $b$ at certain CM-point $\tau$ 
(see \cite{MR633666}). Thus, $h_{\Q}(\tau,x)$ identically vanishes in the second case according to \eqref{eq_series}.
\end{remark}
\begin{remark}
It is known that $\widehat{E}(\tau,s)$ has infinitely many zeros outside the critical line 
(see \cite{MR0030997}, \cite{DaHe}, \cite{DaHe2} and \cite{MR0215798}). Hence it is expected that the behaviour of the families $\{h_{Q}(\tau,x)\}_{\tau \in \mathfrak{h}}$ and $\{h_{Q}^{\vee}(\tau,x)\}_{\tau \in \mathfrak{h}}$ are quite different. The comparison of these two families is an interesting problem, from a number theoretical point of view. In particular, we would like to have information on the single sign property for both families.
\end{remark}
%
%
%%%%%%%%%%%%%%%%%%%%%%%%%%%%%%%%%%%%%%%%%%%%%%%%%%%%%%%%%%%%%%%%%%%%%%%%%%%%%%%%%%%%%%%%
%%
%%
%%
%%%%%%%%%%%%%%%%%%%%%%%%%%%%%%%%%%%%%%%%%%%%%%%%%%%%%%%%%%%%%%%%%%%%%%%%%%%%%%%%%%%%%%%%
%
%..................
%
\appendix%
\section{A useful analytic estimate for $L$-functions}\label{ap_analytic}%
Proposition \ref{lem_601}  below is used several times in this paper. 
Proposition \ref{lem_601} is deduced from Proposition \ref{ap_prop_1} 
which holds for the ``$L$-functions'' defined in \cite[Section 5.1]{IK} 
and is a slight extension of \cite[Proposition 5.7]{IK}. 
In particular,  Proposition \ref{lem_601} holds for the $L$-functions in \cite[Section 5.1]{IK}. 
To prove Proposition \ref{ap_prop_1} and Proposition \ref{lem_601} we use the Hadamard product of  order one entire functions, 
Stirling's formula and the boundedness of the logarithmic derivative of a function in a vertical strip contained in a right-half plane.
\begin{proposition} \label{ap_prop_1}
Let ${\mathcal L}(s)$ be a complex valued function defined in the right 
half plane ${\rm Re}(s)>\sigma_1$. 
Assume that 
\begin{itemize}
\item ${\mathcal L}(s)$ is expressed as an absolutely convergent 
Dirichlet series in the right-half plane ${\rm Re}(s)> \sigma_1$,
\item ${\mathcal L}(s)$ has a meromorphic continuation to $\C$, 
\item there exist some ${\mathfrak q}>0$, $r \geq 1$, $\lambda_{j}>0$, 
${\rm Re}(\mu_{j}) > - \sigma_1 \lambda_{j}$ $(1\leq j \leq r)$
and $\abs{\epsilon}=1$ such that the function
\begin{equation*}
\widehat{\mathcal L}(s)
\coloneqq  \gamma(s) {\mathcal L}(s)
\coloneqq  {\mathfrak q}^{s/2} \prod_{j=1}^{r} \Gamma(\lambda_{j} s + \mu_{j}) 
{\mathcal L}(s),
\end{equation*}
satisfies the functional equation $\widehat{\mathcal L}(s)=\epsilon 
\overline{\widehat{\mathcal L}(d+1-\bar{s})}$
for some integer $d\geq 0$ 
(the condition ${\rm Re}(\mu_{j}) > - \sigma_1 \lambda_{j}$ tells us that $\gamma(s)$ has no zeros in $\C$ and no poles for $\Re(s) \ge \sigma_1$). 
\item there exists a polynomial $P(s)$ such that $P(s)\widehat{\mathcal L}(s)$ is an entire function on the complex plane of order one, 
\item the logarithmic derivative of ${\mathcal L}(s)$ is expressed as an absolutely convergent 
Dirichlet series
\begin{equation*}
-\frac{{\mathcal L}^\prime(s)}{{\mathcal L}(s)} = \sum_{n=1}^\infty \frac{\Lambda_{\mathcal L}(n)}{n^s}
\end{equation*}
in the right-half plane $\Re(s) > \sigma_2 \geq \sigma_1$. 
\end{itemize}
Then 
\begin{enumerate}
\item the number of zero $\rho=\beta+i\gamma$ such that $|\gamma - T| \le 1$, 
say $m(T)$, satisfies 
\begin{equation}
m(T) = O(\log |T|)
\end{equation} 
with an implied constant depending on ${\mathcal L}(s)$ only, 
\item there exists $c \ge {\rm max}\{\sigma_1,(d+1)/2\}$ such that all zeros of $\widehat{\mathcal L}(s)$ are in the strip $d+1-c \le \Re(s) \le c$,  
\item there exists $t_0>0$ such that for any $s=\sigma+it$ in the strip $d+1-c-1 \le \sigma \le c+1$, $|t| \ge t_0 $ we have
\begin{equation}
\frac{{\mathcal L}^\prime(s)}{{\mathcal L}(s)} = \sum_{|s - \rho| < 1} \frac{1}{s-\rho} +O(\log |t|)
\end{equation}
where the sum runs over all zeros $\rho=\beta+i\gamma$ of ${\mathcal L}(s)$ such that $d+1-c \le \beta \le c$ and $|s-\rho| <1$. 
\end{enumerate}
\end{proposition}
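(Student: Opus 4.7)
The plan is to follow the classical template, e.g.\ \cite[Proposition~5.7]{IK}, adjusting the argument so that the only hypotheses used are those listed in the proposition.

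First I would establish (2). By absolute convergence of the Dirichlet series for $\mathcal{L}(s)$, one has $\mathcal{L}(s) = a_{n_0} n_0^{-s}(1+o(1))$ as $\Re(s)\to+\infty$ for the smallest $n_0$ with $a_{n_0}\neq 0$, hence $\mathcal{L}(s)\neq 0$ in some right half-plane $\Re(s)\geq c$ with $c\geq\max\{\sigma_1,(d+1)/2\}$. The hypothesis $\Re(\mu_j)>-\sigma_1\lambda_j$ ensures that $\gamma(s)$ has no zeros in $\C$, so $\widehat{\mathcal{L}}(s)\neq 0$ for $\Re(s)\geq c$. Applying the functional equation $\widehat{\mathcal{L}}(s)=\epsilon\,\overline{\widehat{\mathcal{L}}(d+1-\bar s)}$ confines all zeros of $\widehat{\mathcal{L}}$ to the vertical strip $d+1-c\leq\Re(s)\leq c$.

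Next, I would treat (1) and prepare ground for (3) by establishing a uniform bound for $\log|P(s)\widehat{\mathcal{L}}(s)|$ on vertical lines slightly outside the critical strip. On $\Re(s)=c+1$ absolute convergence gives $|\mathcal{L}(s)|\ll 1$, and Stirling yields $|\gamma(s)|\ll|t|^{A}e^{-\delta|t|}$ for explicit $A$, $\delta\geq 0$; together with the polynomial factor $P(s)$ this gives a polynomial bound in $|t|$. By the functional equation, the same type of bound holds on $\Re(s)=d-c$. The Phragm\'en--Lindel\"of principle then extends this to a bound $|P(s)\widehat{\mathcal{L}}(s)|\ll(1+|t|)^{M}$ uniformly in the enlarged strip $d-c-1\leq\Re(s)\leq c+1$. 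Since $P(s)\widehat{\mathcal{L}}(s)$ is entire of order $1$, Jensen's formula applied to the disk of radius $2$ centred at $2+iT$ (whose boundary lies in the controlled region after shifting $T$ by a bounded amount if $c$ is chosen slightly larger) gives $m(T)=O(\log|T|)$ with an implied constant depending only on $\mathcal{L}$; this proves (1).

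For (3), the Hadamard factorization of the order-one entire function $P(s)\widehat{\mathcal{L}}(s)$ gives
\begin{equation*}
\frac{(P\widehat{\mathcal{L}})'}{P\widehat{\mathcal{L}}}(s)=B+\sum_{\rho}\Bigl(\frac{1}{s-\rho}+\frac{1}{\rho}\Bigr),
\end{equation*}
the sum being over all zeros $\rho$ of $P\widehat{\mathcal{L}}$. Evaluating this identity at $s=2+c+it$ (where absolute convergence of $(\log\mathcal{L})'$ combined with Stirling's formula for $(\log\gamma)'$ and $P'/P$ bounds the left-hand side by $O(\log|t|)$) and subtracting from the identity at general $s$ in the strip $d+1-c-1\leq\Re(s)\leq c+1$, the terms with $|\Im\rho-t|\geq 1$ telescope and, by (1), contribute $O(\log|t|)$; the polynomial $P(s)$ and the logarithmic derivative of $\gamma(s)$ also contribute $O(\log|t|)$ by Stirling. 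What remains is the sum over zeros $\rho$ with $|s-\rho|<1$, whence
\begin{equation*}
\frac{\mathcal{L}'}{\mathcal{L}}(s)=\sum_{|s-\rho|<1}\frac{1}{s-\rho}+O(\log|t|),
\end{equation*}
as claimed.

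The main technical obstacle will be Step~2: one must show that the (non-trivial) polynomial growth of $\widehat{\mathcal{L}}$ in vertical strips, established from absolute convergence on one side and the functional equation plus Stirling on the other, suffices to invoke Phragm\'en--Lindel\"of in a strip wide enough that Jensen's formula delivers the $O(\log|T|)$ bound uniformly. All subsequent counting and telescoping arguments are then routine, but the bookkeeping around the factor $P(s)$ (needed to absorb the finitely many possible poles of $\widehat{\mathcal{L}}$) must be carried out carefully so that the implied constants in (1) and (3) depend only on $\mathcal{L}$.
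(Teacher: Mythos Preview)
Your argument for (2) and the outline for (3) are fine and match the paper's approach.  The gap is in your proof of (1).

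You propose to apply Jensen's formula to $P(s)\widehat{\mathcal L}(s)$ on a disk centred near $2+iT$ (or $c+2+iT$).  The problem is that $\widehat{\mathcal L}$ contains the factor $\gamma(s)=\mathfrak q^{s/2}\prod_j\Gamma(\lambda_j s+\mu_j)$, and Stirling's formula gives $|\gamma(\sigma+iT)|\asymp |T|^A e^{-\frac{\pi}{2}(\sum_j\lambda_j)|T|}$ on any fixed vertical line.  Hence at the centre $z_0$ you have $\log|P\widehat{\mathcal L}(z_0)|=-\delta T+O(\log T)$ with $\delta>0$.  Your Phragm\'en--Lindel\"of step shows $|P\widehat{\mathcal L}|$ is at most polynomially bounded on the boundary circle, so Jensen only yields
\[
m(T)\ \ll\ \max_{|s-z_0|=R}\log|P\widehat{\mathcal L}(s)|-\log|P\widehat{\mathcal L}(z_0)|\ =\ O(\log T)-(-\delta T+O(\log T))\ =\ O(T),
\]
which is far too weak.  (To make a Jensen argument work one must apply it to $\mathcal L(s)$ itself after first establishing a polynomial convexity bound for $\mathcal L$---not $\widehat{\mathcal L}$---in the strip; then the value at the centre is $O(1)$ and Jensen gives $O(\log T)$.)

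The paper sidesteps this entirely.  It takes the Hadamard partial-fraction expansion of $(P\widehat{\mathcal L})'/(P\widehat{\mathcal L})$ as you do, but instead of Jensen it simply evaluates at the single point $s_0=c'+2+iT$.  There the Dirichlet-series hypothesis gives $\mathcal L'/\mathcal L(s_0)=O(1)$, and---crucially---Stirling's formula for the \emph{logarithmic derivative} gives $\gamma'/\gamma(s_0)=O(\log T)$ (the digamma function grows only logarithmically, so the exponential decay of $\gamma$ never enters).  Taking real parts and using $\Re(1/(s_0-\rho))\asymp 1/(1+(T-\gamma)^2)$ yields $\sum_\rho 1/(1+(T-\gamma)^2)=O(\log T)$, from which $m(T)=O(\log T)$ is immediate.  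This is the standard route in \cite[Prop.~5.7]{IK} and \cite[\S9.6]{MR882550}, and it feeds directly into the subtraction argument for (3) that you already have.
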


\begin{proof}[\proofname{} of Proposition~\ref{ap_prop_1}]%
We argue  similarly  to the proof of Proposition 5.7 in \cite{IK} which is essentially the proof
in \cite[Section 9.6]{MR882550}. 
First we prove (2). Using the Dirichlet series of ${\mathcal L}(s)$ we obtain 
${\mathcal L}(\sigma+it)= a_{n_1} n_1^{-\sigma}(1+o(1))$
as $\Re(s)=\sigma \to +\infty$ for some integer $n_1$ with a non-zero Dirichlet coefficient $a_{n_1}$.
Therefore there exists $c\geq {\rm max}\{\sigma_1,(d+1)/2\}$ such that ${\mathcal L}(s) \not=0$ in the right-half plane $\Re(s) > c$.
Since $\gamma(s)$ does not vanish for $\Re(s)>\sigma_1$,
the function $\widehat{{\mathcal L}}(s)$ has no zeros in the right-half plane $\Re(s) > c \geq \sigma_1$.
By the functional equation,  $\widehat{{\mathcal L}}(s) \not=0$ in the left-half plane $\Re(s) < d + 1 -c$.
Hence all zeros of $\widehat{\mathcal L}(s)$ are in the vertical strip $d+1-c \leq \Re(s) \leq c$. 

By the assumptions there exist  constants $a$, $b$ and a nonnegative integer $m$ such that 
\begin{equation*}
P(s)\widehat{\mathcal L}(s) = (s(s-d-1))^m e^{a + bs} \prod_{\rho\not=0,d+1}\Bigl(1- \frac{s}{\rho} \Bigr)e^{s/\rho},
\end{equation*}
where $\rho$ ranges over all zeros of $P(s)\widehat{\mathcal L}(s)$ different from $0$, $d+1$. 
This is a consequence of the Hadamard factorization theorem for  entire functions of order one. 
Taking the logarithmic derivative, 
\begin{equation}\label{ap_01}
-\frac{{\mathcal L}^\prime(s)}{{\mathcal L}(s)}=
\frac{1}{2} \log {\mathfrak q} + \frac{\gamma^\prime(s)}{\gamma(s)}-b+\frac{P^\prime(s)}{P(s)}
- \frac{m}{s} - \frac{m}{s-d-1}
- \sum_{\rho} \left(\frac{1}{s-\rho}+\frac{1}{\rho} \right).
\end{equation}
We take $c^\prime \ge {\rm max}\{{\sigma_2},c\}$ such that the polynomial $P(s)$ has no zero in the right-half plane $\Re(s) \ge c^\prime$. 
Let $T \ge 2$ and $s_0=c^\prime+2+iT$. Then
\begin{equation} \label{ap_02}
\left| \frac{{\mathcal L}^\prime(s_0)}{{\mathcal L}(s_0)} \right| 
\le \sum_{n=1}^\infty \frac{|\Lambda_{\mathcal L}(n)|}{n^{c^\prime+2}}=O(1).
\end{equation}
By Stirling's formula we have 
\begin{equation*}
\frac{1}{2} \log {\mathfrak q} + \frac{\gamma^\prime(s_0)}{\gamma(s_0)} = O(\log |T|). 
\end{equation*}
For every  zero $\rho=\beta+i\gamma$ we have
\begin{equation*}
\frac{2}{(2c-d+1)^2+(T-\gamma)^2} \le \Re \left( \frac{1}{s_0-\rho} \right) \le \frac{2c-d+1}{4+(T-\gamma)^2}. 
\end{equation*}
Hence we can take the real part in \eqref{ap_01} for $s_0=c^\prime+2+iT$ and rearrange the resulting absolutely convergent series to derive that
\begin{equation} \label{ap_03}
\sum_{\rho} \frac{1}{1+(T-\gamma)^2} = O(\log |T|). 
\end{equation}
This implies (1). Here we used the fact that $\sum_{\rho\not=0,d+1}(\frac{1}{\rho}+\frac{1}{\overline{\rho}})$ converges absolutely, 
which is a consequence of the order one condition and $d+1 -c \le \Re(\rho) \le c$. 

To prove (3), we write $s=\sigma+it$. 
We suppose that $d+1-c-1 \le \Re(s) \le c^\prime+1$ 
and take $t_0$ such that $\gamma(s)$ and $P(s)$ has no pole for $|t| \ge t_0$. 
We have 
\begin{equation}
-\frac{{\mathcal L}^\prime(s)}{{\mathcal L}(s)}
= -\frac{{\mathcal L}^\prime(s)}{{\mathcal L}(s)}+\frac{{\mathcal L}^\prime(c^\prime+2+it)}{{\mathcal L}(c^\prime+2+it)} +O(\log|t|).
\end{equation} 
By \eqref{ap_01}, \eqref{ap_02} and Stirling's formula, we obtain
\begin{equation}
-\frac{{\mathcal L}^\prime(s)}{{\mathcal L}(s)}=
\frac{\gamma^\prime(s)}{\gamma(s)}+\frac{P^\prime(s)}{P(s)}
- \frac{m}{s} - \frac{m}{s-d-1}
- \sum_{\rho} \left(\frac{1}{s-\rho}-\frac{1}{c^\prime+2+it-\rho} \right) + O(\log|t|).
\end{equation}
In the series, keep the zeros with $|s-\rho|<1$ and estimate the remainder by $O(\log|t|)$ using
\begin{equation*}
\left| \frac{1}{s-\rho} -  \frac{1}{c^\prime+2+it-\rho} \right| \le \frac{2c-d+1}{1+(t-\gamma)^2}
\end{equation*} 
and \eqref{ap_03}. Moreover we have
\begin{equation*}
\frac{\gamma^\prime(s)}{\gamma(s)} =O(\log|t|) 
\end{equation*}
uniformly for $d+1-c-1 \le \Re(s) \le c^\prime+1$ and $|t| \ge t_0$
by Stirling's formula. Thus (3) follows. 
\end{proof}
Using Proposition \ref{ap_prop_1} we have the following proposition which is used several times in this paper.
\begin{proposition}\label{lem_601}
Let ${\mathcal L}(s)$ be a function satisfying the conditions in Proposition \ref{ap_prop_1}. 
Let $H>1$ and $T \geq {\rm max}\{2,t_0\}$ be some real numbers, 
and let $a<b$ be real numbers such that $a \le d+1-c-1$ and $b \ge c+1$,
where $t_0$ and $c$ are real numbers appeared in Proposition \ref{ap_prop_1}. 
Then there exists a real number $A$ and a subset $\mathcal{E}_T$ of $(T,T+1)$ such that
\begin{equation*}
\forall t\in\mathcal{E}_T,\forall\sigma\in(a,b),\quad {\mathcal L}(f,\sigma\pm it)^{-1}=O\left(t^A\right)
\end{equation*}
and
\begin{equation*}
\mu\left[(T,T+1)\setminus\mathcal{E}_T\right]\leq\frac{1}{H}
\end{equation*}
where $\mu$ stands for the Lebesgue measure on $\R$.
\end{proposition}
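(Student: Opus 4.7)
The plan is to combine Proposition \ref{ap_prop_1}(3) with a Chebyshev--type averaging argument over $(T,T+1)$, using the zero count from Proposition \ref{ap_prop_1}(1). Outside the closed strip $[d+1-c-1,c+1]$ the desired bound is immediate: for $\sigma>c+1$ absolute convergence of the Dirichlet series forces $|\mathcal{L}(s)|^{-1}=O(1)$, while for $\sigma<d+1-c-1$ the functional equation and Stirling's formula reduce matters to the previous case (the ratio of gamma factors being polynomially bounded in $t$). Fix once and for all a value $\sigma_0\in(c,c+1]$; by the Dirichlet series representation, $|\log|\mathcal{L}(\sigma_0+it)||=O(1)$ uniformly in $t$, and it remains to treat $\sigma\in[d+1-c-1,c+1]$.

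For $t\in(T,T+1)$ define
\[
\Phi(t)\coloneqq\int_{d+1-c-1}^{\sigma_0}\Bigl|\frac{\mathcal{L}'}{\mathcal{L}}(u+it)\Bigr|\,du.
\]
By Proposition \ref{ap_prop_1}(3) and the triangle inequality,
\[
\Phi(t)\leq\sum_{\substack{\rho=\beta+i\gamma\\ |\gamma-t|<1}}\int_{d+1-c-1}^{\sigma_0}\frac{du}{\sqrt{(u-\beta)^2+(t-\gamma)^2}}+O(\log T).
\]
Integrating over $t\in(T,T+1)$ and applying Fubini, the double integral contributed by each zero is $O(1)$ because $1/\sqrt{(u-\beta)^2+(t-\gamma)^2}$ has an integrable singularity over any bounded rectangle in the $(u,t)$-plane. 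Proposition \ref{ap_prop_1}(1) furnishes $O(\log T)$ zeros with $|\gamma-T|\leq 2$, so
\[
\int_T^{T+1}\Phi(t)\,dt=O(\log T).
\]

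Chebyshev's inequality then produces a constant $C_1=C_1(\mathcal{L},a,b)>0$ such that
\[
\mathcal{E}_T\coloneqq\bigl\{t\in(T,T+1)\,:\,\Phi(t)\leq C_1H\log T\bigr\}
\]
has complement of Lebesgue measure at most $1/H$. For $t\in\mathcal{E}_T$ the function $\mathcal{L}$ is automatically zero-free on the horizontal segment from $\sigma+it$ to $\sigma_0+it$ (else $\Phi(t)$ would diverge), and for $\sigma\in(a,b)\cap[d+1-c-1,c+1]$, integrating $\Re(\mathcal{L}'/\mathcal{L})$ along that segment yields
\[
\bigl|\log|\mathcal{L}(\sigma+it)|-\log|\mathcal{L}(\sigma_0+it)|\bigr|\leq\Phi(t)\leq C_1H\log T,
\]
whence $|\mathcal{L}(\sigma+it)|^{-1}=O(T^{C_1H})$. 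The conclusion follows with $A=C_1H$, and the $\sigma-it$ case is identical by symmetry.

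The principal obstacle to avoid is the naive pointwise approach: requiring $|t-\gamma|>\delta$ for every nearby zero-ordinate with a uniform $\delta$ forces $\delta=\Theta(1/(H\log T))$ in order to keep the exclusion measure $\leq 1/H$, and this yields only $|\mathcal{L}|^{-1}=T^{O(\log\log T)}$, which is not polynomial. The $L^1$--Chebyshev viewpoint above is essential: on average over $t\in(T,T+1)$ each zero contributes only $O(1)$ to $\Phi(t)$, and this is precisely what allows the polynomial exponent $A$ to depend only on $H$ (and on $\mathcal{L},a,b$) but not on $T$.
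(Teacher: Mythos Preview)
Your argument is correct and uses the same ingredients as the paper (Proposition~\ref{ap_prop_1}(1),(3) together with a Chebyshev averaging over $t\in(T,T+1)$), but the technical implementation differs. The paper first integrates the explicit formula in $\sigma$ to obtain $\log\mathcal{L}(s)=\sum_{|t-\gamma|<1}\log(s-\rho)+O(\log t)$, takes real parts, and bounds below by $\sum\log|t-\gamma|$; it then integrates this one-variable sum over $t\in(T,T+1)$ using $\int_{-1}^{1}\log|x|\,dx=-2$, so that each of the $O(\log T)$ relevant zeros contributes $O(1)$, and concludes by Chebyshev. You instead bound the $L^1$-norm of $\Phi(t)=\int|\mathcal{L}'/\mathcal{L}|\,du$ directly via Fubini and the two-dimensional local integrability of $1/|z-\rho|$. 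Your route avoids choosing a branch of the complex logarithm and is arguably cleaner; the paper's route, following Titchmarsh, isolates the dependence on the ordinates $\gamma$ more explicitly, which can be useful elsewhere. One small point: to ensure $|\log|\mathcal{L}(\sigma_0+it)||=O(1)$ you need $\sigma_0$ large enough that the tail of the Dirichlet series (or, more robustly, the absolutely convergent series for $\log\mathcal{L}$ coming from that of $\mathcal{L}'/\mathcal{L}$ in $\Re(s)>\sigma_2$) is controlled; the paper implicitly takes $c\ge\sigma_2$ for the same reason, so you may simply choose $\sigma_0>\sigma_2$.
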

\begin{remark}
In the case of $\zeta(s)$, this proposition is nothing else than \cite[Theorem 9.7]{MR882550}.
\end{remark}
\begin{proof}[\proofname{} of Proposition~\ref{lem_601}]% 
The following arguments are essentially as in \cite[Section 9.7]{MR882550}. 
We  use the notation $s=\sigma+it$,   $t>0$. 
It sufficies to prove for $a=d+1-c-1$ and $b=c+1$, 
since ${\mathcal L}^\prime(s)/{\mathcal L}(s)$ has absolutely convergent Dirichlet series for $\Re(s) > c  \ge \sigma_2$ 
and has a functional equation. 
We have
\begin{equation}\label{603}
\frac{{\mathcal L}^\prime(s)}{{\mathcal L}(s)}=\sum_{\substack{{\mathcal L}(\rho)={\mathcal L}(\beta+i\gamma)=0 \\
d+1-c<\beta< c \\
\vert t-\gamma\vert<1}}\frac{1}{s-\rho}+O(\log{t})
\end{equation}
uniformly for $d+1-c-1 \leq\sigma\leq c+1$ and $t\geq t_0$ by Proposition \ref{ap_prop_1} . 
The difference between the sum in \eqref{603} and the sum in Proposition \ref{ap_prop_1} (3) does not exceed $O(\log{t})$ 
according to Proposition \ref{ap_prop_1} (1). 
Assuming that ${\mathcal L}(s)$ does not vanish on $\Im{(s)}=t$ and integrating \eqref{603} from $s$ to $c+1+it$, we get
\begin{equation}\label{604}
\log{{\mathcal L}}(s)=\sum_{\substack{{\mathcal L}(\rho)={\mathcal L}(\beta+i\gamma)=0 \\
d+1-c<\beta< c \\
\vert t-\gamma\vert<1}}\log{(s-\rho)}+O(\log t),
\end{equation}
uniformly for $d+1-c-1 \leq \sigma \leq c+1$, $t \geq t_0$, where $\log{{\mathcal L}}(s)$ has its usual meaning ($-\pi<\Im{(\log{{\mathcal L}}(s))}\leq\pi$). 
The fact that the number of $\rho=\beta+i\gamma$ satisfying $\vert t-\gamma\vert<1$ is $O(\log t)$ (Proposition \ref{ap_prop_1} (1)) is used here. Taking real parts in \eqref{604}, we have
\begin{equation*}
\log{\vert {\mathcal L}(s)\vert}=\sum_{\substack{{\mathcal L}(\rho)={\mathcal L}(\beta+i\gamma)=0 \\
d+1-c<\beta<c \\
\vert t-\gamma\vert<1}}\log{\vert s-\rho\vert}+O(\log{t})\geq\sum_{\substack{{\mathcal L}(\rho)={\mathcal L}(\beta+i\gamma)=0 \\
d+1-c<\beta<c \\
\vert t-\gamma\vert<1}}\log{\vert t-\gamma\vert}+O(\log t).
\end{equation*}
One would like to integrate with respect to $t$ from $T\geq t_0$ to $T+1$ taking care of the fact that there may be zeros of ${\mathcal L}(s)$ of height between $(T,T+1)$ where the previous inequality does not hold. Let $y_1<\cdots<y_M$ be a finite sequence of real numbers satisfying
\begin{equation*}
\forall j\in\{1,\cdots,M\}, \exists \sigma\in[-1,2], \quad {\mathcal L}(\sigma+iy_j)=0.
\end{equation*}
Setting $y_0\coloneqq T$ and $Y_{M+1}\coloneqq T+1$ and assuming that ${\mathcal L}(s)$ does not vanish on $\Im{(s)}=T$ and on $\Im{(s)}=T+1$, we have
\begin{eqnarray*}
\sum_{j=0}^M\int_{y_j}^{y_{j+1}}\sum_{\substack{{\mathcal L}(\rho)={\mathcal L}(\beta+i\gamma)=0 \\
d+1-c<\beta<c \\
\vert t-\gamma\vert<1}}\log{\vert t-\gamma\vert}\dd t & = & \sum_{j=0}^M\sum_{\substack{{\mathcal L}(\rho)={\mathcal L}(\beta+i\gamma)=0 \\
d+1-c<\beta<c \\
y_j-1<\gamma<y_{j+1}+1}}\int_{\max{(\gamma-1,y_j)}}^{\min{(\gamma+1,y_{j+1})}}\log{\vert t-\gamma\vert}\dd t\\
& \geq & \sum_{j=0}^M\sum_{\substack{{\mathcal L}(\rho)={\mathcal L}(\beta+i\gamma)=0 \\
d+1-c<\beta<c \\
y_j-1<\gamma<y_{j+1}+1}}\int_{\gamma-1}^{\gamma+1} \log{\vert t-\gamma\vert}\dd t \\
& = & \sum_{j=0}^M\sum_{\substack{{\mathcal L}(\rho)={\mathcal L}(\beta+i\gamma)=0 \\
d+1-c<\beta<c \\
y_j-1<\gamma<y_{j+1}+1}}(-2) \\
& > & -A\log{T}
\end{eqnarray*}
The last inequality is a consequence of an analogue of \cite[Theorem 5.8]{IK}. 
Thus,
\begin{equation*}
\sum_{\substack{{\mathcal L}(\rho)={\mathcal L}(\beta+i\gamma)=0 \\
d+1-c<\beta<c \\
\vert t-\gamma\vert<1}}\log{\vert t-\gamma\vert}>-AH\log T
\end{equation*}
for all  $t$ in $(T,T+1)$, except for a set of Lebesgue measure $1/H$.
\end{proof}
If $L(f,s)$ is a $L$-function defined in \cite[Section 5.1]{IK}, 
Proposition \ref{lem_601} can be stated in the following form.
\begin{proposition} \label{lem_602}
Let $H>1$ and $T\geq 2$ be some real numbers. 
If $L(f,s)$ is a $L$-function in the sense of \cite{IK} then there exists a real number $A$ and a subset $\mathcal{E}_T$ of $(T,T+1)$ such that
\begin{equation*}
\forall t\in\mathcal{E}_T,\forall\sigma\in\left(-\frac{1}{2},\frac{5}{2}\right),\quad L(f,\sigma\pm it)^{-1}=O\left(t^A\right)
\end{equation*}
and
\begin{equation*}
\mu\left[(T,T+1)\setminus\mathcal{E}_T\right]\leq\frac{1}{H}
\end{equation*}
where $\mu$ stands for the Lebesgue measure on $\R$.
\end{proposition}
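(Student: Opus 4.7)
The plan is to derive Proposition~\ref{lem_602} as an immediate specialization of Proposition~\ref{lem_601} to the class of $L$-functions defined in \cite{IK}. The only work is to verify that each hypothesis required by Proposition~\ref{ap_prop_1} is among those imposed in the definition of an $L$-function in the sense of Iwaniec--Kowalski, and then to select the parameter $c$ large enough so that the strip $(-1/2,5/2)$ is accessible.

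The axioms to check are standard: any IK $L$-function has an absolutely convergent Dirichlet series on $\Re(s)>1$, a meromorphic continuation to $\C$, a gamma factor of the shape $\gamma(s)=\mathfrak{q}^{s/2}\prod_j\Gamma(\lambda_j s+\mu_j)$ satisfying the positivity condition $\Re(\mu_j)\geq 0$, a functional equation $\widehat{L}(f,s)=\epsilon\,\overline{\widehat{L}(f,1-\bar{s})}$ which gives $d=0$ in the notation of Proposition~\ref{ap_prop_1}, and a polynomial $P(s)$ such that $P(s)\widehat{L}(f,s)$ is entire of order one. The Euler product in turn yields the absolutely convergent Dirichlet series expansion of $-L^\prime/L$ on $\Re(s)>1$, so we may take $\sigma_1=\sigma_2=1$.

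For the choice of $c$, I would note that the Euler product gives $L(f,s)\neq 0$ on $\Re(s)>1$, and combined with the functional equation this places all zeros of $\widehat{L}(f,s)$ in the critical strip $0\leq\Re(s)\leq 1$. Thus any $c\geq 1$ is admissible in the conclusion (2) of Proposition~\ref{ap_prop_1}, and I would take $c=3/2$ so that $d+1-c-1=-3/2$ and $c+1=5/2$. Applying Proposition~\ref{lem_601} with $a=-3/2$ and $b=5/2$ then furnishes the exceptional set $\mathcal{E}_T\subset(T,T+1)$ of complementary measure at most $1/H$ together with the polynomial bound $L(f,\sigma\pm it)^{-1}=O(t^A)$ valid uniformly for $\sigma\in(-3/2,5/2)$. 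Restricting to the sub-interval $(-1/2,5/2)$ gives precisely Proposition~\ref{lem_602}.

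There is essentially no obstacle: the substantive analytic work---Hadamard factorisation, Stirling estimates, and the exceptional-set argument balancing the counting function of zeros against the logarithmic derivative of $L$---has already been carried out inside Proposition~\ref{ap_prop_1} and Proposition~\ref{lem_601}. The proof of Proposition~\ref{lem_602} thus reduces to a checklist of axioms and a single judicious enlargement of the parameter $c$ beyond its minimal admissible value, so as to push the bound $c+1$ up to the desired abscissa $5/2$.
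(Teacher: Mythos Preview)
Your proposal is correct and matches the paper's approach: the paper gives no separate proof of Proposition~\ref{lem_602}, introducing it simply as the form Proposition~\ref{lem_601} takes when $\mathcal{L}$ is an $L$-function in the sense of \cite{IK}. Your verification of the axioms of Proposition~\ref{ap_prop_1} with $d=0$, $\sigma_1=\sigma_2=1$, and your enlargement of $c$ to $3/2$ so that $(a,b)=(-3/2,5/2)$ covers the target strip, is exactly the intended reduction.
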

%
%
%--------------------------------------------------------------------------------------------
%
%
%------------------------------------------------------------------------------
%
%.......................................

%.......................................
\bibliographystyle{amsplain}
%\bibliography{biblio}

%\newcommand*{\MMR}[2]{\relax\ifhmode\unskip\space\fi }
% \MMRhref is called by the amsart/book/proc definition of \MMR.

\def\cprime{$'$} \def\cprime{$'$}
\providecommand{\bysame}{\leavevmode\hbox to3em{\hrulefill}\thinspace}
\providecommand{\MMR}[2]{\relax\ifhmode\unskip\space\fi }
% \MMRhref is called by the amsart/book/proc definition of \MMR.
\providecommand{\MMRhref}[2]{%
  \href{http://www.ams.org/mathscinet-getitem?mr=#1}{#2}
}
\providecommand{\href}[2]{#2}

\medskip 

{\it 

Masatoshi Suzuki \quad University of Tokyo, Japan

Guillaume Ricotta \quad University of Bordeaux, France

Ivan Fesenko \quad University of Nottingham, England

} 

%.......................................
\end{document}